\documentclass[12pt,reqno]{amsart}

\input{ig-header.tex}

\newcommand{\papertitle}{\large{Derangements 
and the $p$-adic incomplete \\[.35em]gamma function}}
\newcommand{\papershorttitle}{DERANGEMENTS AND THE $p$-ADIC INCOMPLETE GAMMA FUNCTION}

\title[\papershorttitle]{\papertitle}
\date{\today}
%\date{December 11, 2020}

\author{Andrew O'Desky}
\address{
\parbox{0.5\linewidth}{
    Department of Mathematics\\
    Princeton University\\
    Princeton, NJ 08544-1000, USA\\[.2em]
    \emph{URL}: 
    
    \url{https://www.andrewodesky.com/}\\[-.5em]
    }
}
\email{andy.odesky@gmail.com}

\author{David Harry Richman}
\address{
\parbox{0.5\linewidth}{
    Department of Mathematics\\
    University of Washington\\
    Seattle, WA 98195-4350, USA\\[-.5em]
    }
}
\email{hrichman@uw.edu}

\begin{document}

\begin{abstract}
We introduce a $p$-adic analogue of 
the incomplete gamma function. 
We also introduce quantities ($m$-values) 
    associated to a function on natural numbers 
    and prove a new characterization of $p$-adic continuity 
    for functions with $p$-integral $m$-values. 
Combinatorial interpretations for 
    the integral values of the incomplete gamma function and 
    functions with $m$-values zero or one are obtained, 
which show that these functions count derangements in 
    generalized symmetric groups 
    and permutations with restricted cycle lengths. 
%We also introduce a closely related 
%family of combinatorial sequences 
%counting derangements and arrangements 
%in the wreath product
%of a cyclic group with a symmetric group. 
\end{abstract}

\subjclass[2020]{33B20, 11S80, 11B75, 05A05}

\keywords{derangements, incomplete gamma function, $p$-adic analysis}

\maketitle

\section{Introduction}\label{sec:one} % {{{

%In 1975, Morita discovered 
%a $p$-adic analogue of the gamma function. 
The first object of this paper is to introduce 
a $p$-adic analogue of 
the incomplete gamma function. 
The incomplete gamma function $\icg[s,z]$ 
is the holomorphic function of two variables 
obtained by integrating the differential 
%$t^s e^{-t} \frac{dt}{t}$ 
$z^{s-1} e^{-z} \,dz$ 
over any contour %satisfying certain conditions 
%disjoint from the negative real axis 
from $z$ to $+\infty$ 
disjoint from the real interval 
$\{-\infty< x \leq 0\}$. 
%The contour is required to satisfy certain conventions in order to remove 
%the dependence of this line integral 
%(for more details see \S\ref{sec:picg}). 
%When $s$ and $z$ are positive integers, 
%the value of $\Gamma(s,z)$ lies in the field $\QQ(e)$.
For positive integers $n$ and $r$, 
$\Gamma(n,r)$ is a rational multiple of $1/e^r$. 
%with rational coefficients 
%(cf. 
%Proposition~\ref{prop:icggs}).
%\eqref{eqn:icgpolyexpression}). 
Let $\tau_p \colon \QQ(e) \to \QQ_p$ 
denote the unique field homomorphism satisfying  
$\tau_p(1/e) = %\exp_p(p) = 
    1+ p + p^2/2!+p^3/3!+\cdots.$ 
Then $\tau_p\icg[n,r]$ is a $p$-adic number 
and we may ask for the existence of a 
continuous function which interpolates these values. 

\begin{theorem}\label{thm:introincompletegamma} % {{{
There exists a continuous function 
    $$\Gamma_{p}\colon\ZZ_p \times (1+p\ZZ_p)\to \ZZ_p$$ 
    satisfying 
$$\Gamma_{p}(n,r) = 
    \tau_p\icg[n,r]
    %\gs{n-1}{1/r}r^{n-1}e^p
    $$ 
for any positive integers $n$ and $r$ satisfying $r \equiv 1 \Mod p$. 
Explicitly, for any $s \in \ZZ_p$ and $r \in 1 + p\ZZ_p$ we have the formula 
\begin{equation} % {{{
    \Gamma_p(s,r) = \exp_p(pr) \sum_{k=0}^\infty r^{s-1-k}  k! \binom{s-1}{k} 
\end{equation} % }}}
where $\exp_p$ is the $p$-adic exponential function.  
\end{theorem}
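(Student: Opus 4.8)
The plan is to \emph{define} $\mgamma{p}(s,r)$ by the right-hand side of the stated formula and then check three things: that the series converges $p$-adically and defines a continuous, $\ZZ_p$-valued function; that this function agrees with $\tau_p\icg[n,r]$ at positive integers $n$ and positive integers $r \equiv 1 \Mod p$; and that these two properties pin the function down. Uniqueness is immediate once continuity and interpolation are known, since the pairs $(n,r)$ with $n\in\NNpos$ and $r$ a positive integer congruent to $1$ modulo $p$ are dense in $\ZZ_p \times (1+p\ZZ_p)$, and a continuous function is determined by its values on a dense subset.

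For the interpolation property I would first record the classical closed form. Repeated integration by parts in $\icg[n,r]=\int_r^\infty z^{n-1}e^{-z}\,dz$ gives
\begin{equation}
    \icg[n,r] = (n-1)!\,e^{-r}\sum_{j=0}^{n-1}\frac{r^j}{j!},
\end{equation}
which exhibits $\icg[n,r]$ as a rational multiple of $1/e^r$. Applying $\tau_p$, and using that it is a field homomorphism with $\tau_p(1/e)=\exp_p(p)$ together with the functional equation $\exp_p(p)^r=\exp_p(pr)$ for $r\in\ZZ_p$, yields
\begin{equation}
    \tau_p\icg[n,r] = \exp_p(pr)\,(n-1)!\sum_{j=0}^{n-1}\frac{r^j}{j!}.
\end{equation}
Reindexing by $k=n-1-j$ and using the identity $k!\binom{n-1}{k}=(n-1)!/(n-1-k)!$ rewrites the sum as $\sum_{k=0}^{n-1}r^{\,n-1-k}k!\binom{n-1}{k}$; since $\binom{n-1}{k}$ vanishes for $k\geq n$, this is exactly the value of the defining series at $s=n$. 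This establishes interpolation.

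The genuinely $p$-adic content is convergence and continuity. The key observation is that $|k!|_p\to 0$ as $k\to\infty$, by Legendre's formula $v_p(k!)=(k-s_p(k))/(p-1)$. For $s\in\ZZ_p$ the generalized binomial coefficient $\binom{s-1}{k}$ lies in $\ZZ_p$, so $|\binom{s-1}{k}|_p\leq 1$; and for $r\in 1+p\ZZ_p$ the power $r^{\,s-1-k}$, defined by $\exp_p((s-1-k)\log_p r)$, lies in $1+p\ZZ_p$, so $|r^{\,s-1-k}|_p=1$. Hence the general term is bounded in absolute value by $|k!|_p$, the series converges with every term in $\ZZ_p$, and multiplying by $\exp_p(pr)\in 1+p\ZZ_p$ keeps the sum in $\ZZ_p$. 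For continuity I would note that $v_p(k!)$ is nondecreasing in $k$, so $|k!|_p$ is nonincreasing; therefore the tail $\sum_{k\geq K}$ is bounded uniformly in $(s,r)$ by $|K!|_p\to 0$. Each term is continuous in $(s,r)$—the factor $\binom{s-1}{k}$ is a polynomial in $s$, while $r^{\,s-1-k}$ and $\exp_p(pr)$ are continuous by the standard theory of $p$-adic exponentiation—so uniform convergence makes $\mgamma{p}$ continuous.

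The main obstacle, and the conceptual heart of the argument, is precisely the convergence of a series whose classical analogue diverges: what rescues it is that factorials tend to zero $p$-adically. Upgrading this from pointwise to uniform convergence (hence continuity) rests on the monotonicity of $|k!|_p$, which supplies a clean tail bound independent of both $p$-adic variables. A secondary technical point to handle carefully is the definition and continuity of the power $r\mapsto r^{x}$ for $x\in\ZZ_p$, together with the usual radius-of-convergence caveat at $p=2$, which is what forces the domain of the second variable to be $1+p\ZZ_p$.
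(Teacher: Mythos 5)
Your proposal is correct and follows essentially the same route as the paper: both reduce to the classical closed form $\icg[n,r]=(n-1)!\,e^{-r}\sum_{j=0}^{n-1}r^j/j!$, transport it through $\tau_p$ using $\tau_p(e^{-1})=\exp_p(p)$, and get $p$-adic convergence and continuity of the resulting Mahler-type series from $|k!|_p\to 0$ together with $\left|\binom{s-1}{k}\right|_p\leq 1$ and $|r^{x}|_p=1$. The only difference is packaging --- the paper factors $\tau_p\icg[n,r]$ as $\gs{n-1}{1/r}\,r^{n-1}\exp_p(pr)$, invokes its separately proved joint continuity of $(y,s)\mapsto\gs{y}{s}$, and extends by density, whereas you verify the explicit series directly; note that both treatments share the same unresolved wrinkle at $p=2$, where $\exp_p(x\log_p r)$ is \emph{not} the continuous extension of $n\mapsto r^n$ when $r\equiv 3\Mod 4$, so $r^{x}$ must there be interpreted as that continuous extension rather than via $\exp_p\circ\log_p$.
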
 % }}}

%The use of $\tau_p$ is needed to make sense of $p$-adic interpolation,  
%as the field generated by the values of $\icg[n,r]$ 
%as $n$ and $r$ range over positive integers is $\QQ(e)$ 
%(a circumstance which differs from that of the gamma function). 

We also prove a curious formula for $\Gamma_p$ 
involving the floor function $\floor{\cdot}$. 
For any $n\in \NN$ and $r \in \ZZ$ satisfying $r \equiv 1 \Mod p$, 
\begin{equation} % {{{
    \Gamma_p(n+1,1/r) = 
    r^{-n}\exp_p(p/r)  \cdot
    \begin{cases}
        \floor{e^{1/r}r^nn!+\tfrac12} &\text{if $r<0$,} \\
        \floor{e^{1/r}r^nn!} &\text{if $r>0$.}
    \end{cases}
\end{equation} % }}}
This formula expresses 
a $p$-adically continuous function in terms of a function  
which crucially uses the linear ordering on the real numbers. 

%The $p$-adic incomplete gamma function is also related to 
%the truncated exponential polynomials, 
%    \begin{equation} % {{{
%        f_n(X) := 1 + X + \tfrac{1}{2!}X^2 + \cdots + 
%        \tfrac{1}{n!}X^n
%        \quad(n \geq 1). 
%    \end{equation} % }}}
%We show that for any $n\in \NN$ and $r \in 1+p\ZZ_p$, 
%    \begin{equation} % {{{
%        \Gamma_p(n+1,r) = 
%        \exp_p(pr) n! f_n(r). 
%    \end{equation} % }}}

\subsection{\texorpdfstring{$p$}{p}-adic continuity via \texorpdfstring{$m$}{m}-values} 

The second object of this paper 
is to present a new type of characterization for 
$p$-adic continuity in terms of 
certain auxiliary quantities associated to 
a function $f \colon \NN \to \QQ_p$. 
These quantities are defined using 
    universal polynomials 
    from the theory of symmetric functions. 
Let 
$h_k = \sum_{i_1 \leq \cdots \leq i_k} x_{i_1} \cdots x_{i_k}$ 
denote the $k$th complete homogeneous symmetric function. 
For each positive integer $k$ 
there is a unique polynomial $M_k(h_1,\ldots,h_k)$ satisfying 
%defined by the property 
\begin{equation} % {{{
    M_k(h_1,\ldots,h_k) = p_k = %\sum_{j=1}^\infty x_j^k.
    x_1^k + x_2^k + x_3^k + \cdots . 
\end{equation} % }}}
The \defn{$k$th $m$-value} 
    of a function $f$ is defined by 
\begin{equation}\label{eqn:mvalues} % {{{
    m_k =
    M_k\left(\tfrac{f(1)}{1!},\tfrac{f(2)}{2!},\ldots,\tfrac{f(k)}{k!}\right).
\end{equation} % }}}
When $f(0) = 1$, the $m$-values of $f$ satisfy the identity 
$$\sum_{n=0}^\infty f(n) \frac{X^n}{n!} = \exp\left(\sum_{k=1}^\infty m_k\frac{X^k}{k}\right).$$
For instance, 
$m_2 = f(2) - f(1)^2$, and 
$m_3 = 
\tfrac{1}{2}f(3) - \tfrac32f(1)f(2) + f(1)^3$ 
(cf. \S\ref{sec:univpolys}). 
%\begin{equation} % {{{
%    m_2 = f_2 - f_1^2,\quad
%    m_3 = \tfrac{1}{2}f_3 - \tfrac32f_1f_2 + f_1^3, \quad
%    m_4 = \tfrac{1}{6}f_4-\tfrac{2}{3}f_1f_3-\tfrac12f_2^2
%    +2f_1^2f_2+f_1^4.
%\end{equation} % }}}

\begin{theorem}\label{thm:intromkpinterpolation} 
Let $f \colon \NN \to \QQ_p$ 
%\colon n \mapsto f_n$ 
    be a function satisfying\footnote
    {For the purpose of testing continuity of $f$ 
we may assume $f(0) = 1$ 
by composing with 
a suitable affine transformation, so 
this hypothesis entails no loss of generality.} 
    $f(0) = 1$. 
%    and let $m_r(f)$ be defined by \eqref{eqn:mvalues}. 
Suppose $m_k$ is $p$-integral 
    for all $k\geq 2$. 
Then $f$ is $p$-adically continuous  
    if and only if 
    \begin{equation}\label{eqn:nvaluescongruence}
    m_p \equiv m_1 -1 \Mod{p}.
    \end{equation}
    %$$(-1)^{p+1}N_p(f(1),f(2)/2!,\ldots,f(p)/p!) 
    %\equiv f(1) - 1 \Mod {p\ZZ_p}.$$ 
\end{theorem}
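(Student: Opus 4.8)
The plan is to pass from $f$ to its Mahler coefficients and to read off continuity from a single Artin–Hasse-type cancellation at the prime $p$. Writing $F(X)=\sum_{n\ge0}f(n)X^n/n!=\exp\big(\sum_{k\ge1}m_kX^k/k\big)$ and recalling $\sum_n\binom nk X^n/n!=e^X X^k/k!$, the generating function of the Mahler coefficients $c_k=\sum_{j=0}^k(-1)^{k-j}\binom kj f(j)$ is
$$C(X):=e^{-X}F(X)=\sum_{k\ge0}c_k\frac{X^k}{k!}=\exp\Big((m_1-1)X+\sum_{k\ge2}m_k\frac{X^k}{k}\Big).$$
By Mahler's theorem $f$ extends to a continuous function $\ZZ_p\to\QQ_p$ (equivalently is $p$-adically continuous on $\NN$) iff $c_k\to0$, so the theorem is exactly the claim that $c_k\to0$ iff $m_p\equiv m_1-1\pmod p$. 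Put $\tilde m_1=m_1-1$, $\tilde m_k=m_k$ for $k\ge2$, and $\theta=\tilde m_1-m_p$, so the target congruence is $\theta\equiv0\pmod p$.

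Next I would record two integrality facts. Differentiating $C=\exp(\sum_k\tilde m_kX^k/k)$ gives $c_k=\sum_{j=1}^k\tilde m_j\frac{(k-1)!}{(k-j)!}c_{k-j}$ with integer coefficients $\frac{(k-1)!}{(k-j)!}$; since $\tilde m_j=m_j\in\ZZ_p$ for $j\ge2$, a short bootstrap on $\inf_k v_p(c_k)$ shows that if the $c_k$ are bounded (in particular if $f$ is continuous) then $\tilde m_1\in\ZZ_p$, and then all $c_k\in\ZZ_p$. Assuming henceforth $\tilde m_1\in\ZZ_p$ (the case $\tilde m_1\notin\ZZ_p$ being both discontinuous and a violation of the congruence), I would factor out the Artin–Hasse exponential $E_p(X)=\exp\big(\sum_{j\ge0}X^{p^j}/p^j\big)\in1+X\ZZ_p[[X]]$, writing $C(X)=E_p(X)^{\tilde m_1}\Phi(X)$ with
$$\Phi(X)=\exp\Big(\sum_{\substack{k\ge2\\ k\ \text{not a }p\text{-power}}}\frac{m_k}{k}X^k+\sum_{j\ge1}\frac{m_{p^j}-\tilde m_1}{p^j}X^{p^j}\Big).$$
Since $E_p^{\tilde m_1}\in\ZZ_p[[X]]$, the numbers $e_k=k![X^k]E_p^{\tilde m_1}$ have $v_p(e_k)\ge v_p(k!)\to\infty$, and likewise for $E_p^{-\tilde m_1}$; a binomial-convolution argument then gives $c_k\to0$ iff $\phi_k:=k![X^k]\Phi\to0$. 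The decisive datum is the lowest exceptional coefficient of $\Phi$, the $X^p$-coefficient $(m_p-\tilde m_1)/p=-\theta/p$.

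For the forward direction ($\theta\equiv0$) I would estimate valuations termwise. Each part $k$ of $\log\Phi$ has coefficient $b_k$ with $v_p(b_k)\ge-v_p(k)$, and a partition $\lambda\vdash j$ contributes to $\phi_j$ a summand of valuation at least $\sum_k r_k\eta_k-\tfrac{s_p(j)}{p-1}$, where $\eta_k=\tfrac{k-1}{p-1}+v_p(b_k)\ge\tfrac{k-1}{p-1}-v_p(k)$ and $r_k$ is the multiplicity of $k$. One checks $\eta_k>0$ for every part that occurs: the only borderline indices are $k=1$ (absent from $\Phi$) and $k=p$, and when $\theta\equiv0\pmod p$ the coefficient $-\theta/p$ lies in $\ZZ_p$, raising $\eta_p$ to $v_p(\theta)\ge1>0$. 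As $\inf_k\eta_k/k>0$, the ultrametric inequality gives $v_p(\phi_j)\ge c\,j-\log_p(j)\to\infty$, hence $\phi_k\to0$ and $f$ is continuous.

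The reverse direction ($\theta\not\equiv0$, i.e.\ $v_p(\theta)=0$) is the crux, precisely because the $k=p$ term now refuses to be absorbed ($\eta_p=0$). Here I would split $\Phi=\exp(-\tfrac\theta p X^p)\,\Phi_{\mathrm{rest}}$, whose remaining parts all satisfy $\eta_k>0$, so $v_p(\phi^{\mathrm{rest}}_j)\ge c'j-\log_p(j)$ is bounded below by some $-c_0$ and tends to $+\infty$. Evaluating at the $p$-power degree $p^{M+1}$ and using $(pa)![X^{pa}]\exp(-\tfrac\theta p X^p)=(-\theta)^a w_a$ with $w_a=(pa)!/(p^a a!)\equiv(-1)^a\pmod p$, together with $v_p\binom{p^{M+1}}{pa}=v_p\binom{p^M}{a}$ and $v_p\binom{p^M}{\ell}=M-v_p(\ell)$ (Kummer), I would show that for $M$ large the pure $X^p$-power term $\ell=0$ strictly dominates: every $\ell\ge1$ summand has valuation $\ge\min\!\big(1,\,M-\log_p\ell_1-c_0\big)>0$. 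The ultrametric inequality then forces $v_p(\phi_{p^{M+1}})=0$, so $\phi_k\not\to0$ and $f$ is discontinuous. The main obstacle is exactly this domination estimate: one must choose the $p$-power degree so that the binomial valuations $v_p\binom{p^M}{\ell}$ beat the bounded-below denominators from $\Phi_{\mathrm{rest}}$, and the linchpin is the Wilson-type congruence $w_a\equiv(-1)^a$, which makes the surviving leading residue $\equiv\theta^{\,p^M}\not\equiv0\pmod p$.
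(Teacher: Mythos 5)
Your proposal is correct, but it takes a genuinely different route from the paper's proof. Both arguments begin the same way: pass to $C(X)=e^{-X}F(X)=\exp\big((m_1-1)X+\sum_{k\ge 2}m_kX^k/k\big)$ and invoke Mahler's theorem. From there the paper works in a Banach algebra $\EGFalg{}$ of exponential generating functions with coefficients tending to $0$, and instead of your Artin--Hasse factor it multiplies $C$ by $(1-X)^{m_p}=\prod_{r\ge 1}\exp(-m_pX^r/r)$, a unit of $\EGFalg{}$; this simultaneously cancels the troublesome $X^p/p$ term and shifts every other exponent by $m_p$, reducing the entire question to whether $\exp((m_1-1-m_p)X)$ lies in $\EGFalg{}$ (Lemma~\ref{lemma:one} and Theorem~\ref{thm:msequenceLA}). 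There the ``discontinuous'' direction is immediate, since the EGF-coefficients of $\exp(cX)$ are just $c^n$. Your reverse direction is the price of isolating the obstruction as $\exp(-\theta X^p/p)$ rather than as a linear exponential: you must then show that convolving with $\Phi_{\mathrm{rest}}$ cannot create cancellation, which forces the Kummer/Wilson dominant-term analysis at degree $p^{M+1}$ --- correct, but considerably heavier than the paper's one-line observation, and it is where the technical risk concentrates (the uniform lower bound $v_p(\phi^{\mathrm{rest}}_j)\ge c'j-O(\log j)$ must be fixed before choosing $M$, and ``$\log_p\ell_1$'' should read $\log_p\ell$). What your route buys is transparency: the Artin--Hasse factorization makes visible why only $m_1$ and $m_p$ can matter (the $p$-power terms are exactly those that cannot be exponentiated individually over $\ZZ_p$, and $E_p$ packages them with the linear term), and your termwise $\eta_k$-estimates reprove the content of Lemmas~\ref{lemma:one} and~\ref{lemma:two} in one stroke, with $\eta_p=v_p(\theta)$ showing precisely where the hypothesis enters. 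Two details to tighten when writing it up: the ``bootstrap'' for $m_1-1\notin\ZZ_p$ is cleanest as an exact computation (in the recursion the $j=1$ term strictly dominates, giving $v_p(c_k)=k\,v_p(m_1-1)\to-\infty$, and the congruence fails automatically since $m_p\in\ZZ_p$), and the equivalence $c_k\to 0\iff\phi_k\to 0$ needs the coefficients of \emph{both} $E_p^{\pm(m_1-1)}$ to tend to $0$, which your bound $v_p(e_k)\ge v_p(k!)$ does supply.
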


%\begin{example}[Morita $p$-adic gamma function]
%Suppose $p$ is odd. 
%The Morita $p$-adic gamma function is defined by 
%    \begin{equation}\label{eqn:exampledefiningreln} % {{{
%        \Gamma_p(z) := 
%        \lim_{n\to z^+}
%        (-1)^{n} \prod_{1 \leq k < n, 
%    p \nmid k}k
%\end{equation} % }}}
%where $n$ approaches $z$ 
%    through positive integers. 
%It is well-known that 

One interesting function encompassed by this theorem is  
Morita's $p$-adic gamma function 
    %Morita's result \cite{morita-1975} that 
    $\Gamma_{p,\mathrm{M}}(n) = 
    (-1)^{n} \prod_{1 \leq k < n,p \nmid k}k$; 
    see Example~\ref{example:morita}. 
%Let $f(n) \coloneqq -\Gamma_{p,\mathrm{M}}(n+1)$. 
%%    so that $f(0) = 1$. 
%The $m$-values of $f$ are 
%\begin{equation} % {{{
%m_k = 
%\begin{cases}
%    (-1)^k & \text{if $k \not \equiv 0 \Mod p$,}\\
%    p-2 & \text{if $k = p$,}\\
%    (-1)^k(1-p) & \text{if $k \equiv 0 \Mod p$ 
%    and $k > p$.}
%\end{cases}
%\end{equation} % }}}
%As $m_p = p-2 \equiv (-1) -1 \Mod p$, 
%    we conclude 
%    $n \mapsto -\Gamma_{p,\mathrm{M}}(n+1)$,  
%    and therefore 
%    $n \mapsto \Gamma_{p,\mathrm{M}}(n)$ also, 
%    extends to a $p$-adic continuous map 
%    $\Gamma_{p,\mathrm{M}} \colon \ZZ_p \to \ZZ_p$. 
%    %(cf. Example~\ref{example:morita}). 

Another example comes from the EGF-coefficients 
    $f_\ell$ of 
    the $\ell$-adic Artin--Hasse exponential function $E_\ell(X)$: 
\begin{equation} % {{{
    E_\ell(X) = \exp\bigg(X + \frac{X^\ell}{\ell} + \frac{X^{\ell^2}}{\ell^2} + \cdots \bigg) = 
    \sum_{n=0}^\infty f_\ell(n) \frac{X^n}{n!} . 
\end{equation} % }}}
The $m$-values of $f_\ell$ are 
\begin{equation} % {{{
m_k = 
\begin{cases}
    0 & \text{if $k$ is not a power of $\ell$,}\\
    1 & \text{if $k$ is a power of $\ell$}, 
\end{cases}
\end{equation} % }}}
and %$m_p - m_1 + 1$ 
\begin{equation} % {{{
m_p-m_1+1 = m_p = 
\begin{cases}
    0 & \text{if $p \neq \ell$,}\\
    1 & \text{if $p = \ell$}.  
\end{cases}
\end{equation} % }}}

%We conclude that the EGF-coefficients of 
%the $\ell$-adic Artin--Hasse exponential function 
%admit $p$-adic interpolation if and only if $p \neq \ell$. 

\begin{corollary}
    \label{cor:artin-hasse}
For primes $p$ and $\ell$, 
the EGF-coefficients $f_\ell$ of 
the $\ell$-adic Artin--Hasse exponential function 
are $p$-adically continuous if and only if $p \neq \ell$. 
\end{corollary}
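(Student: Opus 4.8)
The plan is to reduce the statement to a direct application of Theorem~\ref{thm:intromkpinterpolation}. First I would record that $f_\ell(0) = 1$: the constant term of $E_\ell(X)$ is $\exp(0) = 1$, and this constant term is exactly $f_\ell(0)$ under the defining expansion $E_\ell(X) = \sum_{n\geq 0} f_\ell(n) X^n/n!$. Hence the standing hypothesis $f(0) = 1$ of the theorem is met once I regard $f_\ell$ as a function $\NN \to \QQ_p$ by viewing its rational values inside $\QQ_p$ through the canonical embedding $\QQ \hookrightarrow \QQ_p$.

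Next I would confirm the $m$-values. I would use the characterizing identity $\sum_{n\geq 0} f(n) X^n/n! = \exp\left(\sum_{k\geq 1} m_k X^k/k\right)$ valid for functions with $f(0)=1$, and compare it with the defining formula $E_\ell(X) = \exp\left(\sum_{j\geq 0} X^{\ell^j}/\ell^j\right)$. Each summand $X^{\ell^j}/\ell^j$ on the right already has the shape $X^k/k$ with $k = \ell^j$, so matching coefficients of $X^k$ in the two exponents forces $m_k = 1$ exactly when $k$ is a power of $\ell$ (including $k = \ell^0 = 1$) and $m_k = 0$ otherwise. In particular $m_1 = 1$, and every $m_k$ lies in $\{0,1\} \subset \ZZ_p$, so the $p$-integrality hypothesis of Theorem~\ref{thm:intromkpinterpolation} holds automatically.

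With the hypotheses in place, the theorem asserts that $f_\ell$ is $p$-adically continuous if and only if $m_p \equiv m_1 - 1 \Mod{p}$. Since $m_1 = 1$, this congruence collapses to $m_p \equiv 0 \Mod{p}$. I would then argue by cases. If $p \neq \ell$, then the prime $p$ is not a power of $\ell$ (a prime is a power of $\ell$ only when it equals $\ell$), so $m_p = 0$ and the congruence holds, giving continuity. If $p = \ell$, then $p = \ell^1$ is a power of $\ell$, so $m_p = 1$; as $1 \not\equiv 0 \Mod{p}$ for every prime $p$, the congruence fails and $f_\ell$ is not continuous. This establishes continuity precisely when $p \neq \ell$.

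Essentially all of the work is carried by Theorem~\ref{thm:intromkpinterpolation}, so I do not anticipate a serious obstacle. The only points demanding care are the bookkeeping when reading the $m$-values off the logarithmic generating series, and the observation that $m_1 = 1$ rather than $0$ — this is what reduces the criterion to $m_p \equiv 0 \Mod{p}$ and makes the two cases come out cleanly.
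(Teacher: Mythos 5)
Your proposal is correct and follows essentially the same route as the paper, which likewise reads the $m$-values $m_k = \charfn{\{\ell^j\}}(k)$ off the exponent of $E_\ell(X)$, notes $m_p - m_1 + 1 = m_p$, and applies Theorem~\ref{thm:intromkpinterpolation}. The only point you make explicit that the paper leaves implicit is the verification that $m_1 = 1$ (since $1 = \ell^0$), which is exactly the bookkeeping needed to collapse the criterion to $m_p \equiv 0 \Mod{p}$.
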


The same conclusion holds for 
\emph{truncated} 
$\ell$-adic Artin--Hasse exponential functions 
(cf.~\cite{conrad}). 
For another application of Theorem~\ref{thm:intromkpinterpolation} 
    involving the counts of arrangements and derangements 
    in generalized symmetric groups 
    see Remark~\ref{rmk:anrmkpinterpolation}. 

\subsection{Generalized symmetric groups and cycle-restricted permutations} % {{{

The third object of this paper is 
    to prove combinatorial interpretations 
    for each of the following quantities: 
    the integral values of 
    the incomplete gamma function 
    and functions whose $m$-values 
    are either zero or one. 

\subsubsection{Integral values of the incomplete gamma function} % {{{

Just as the values of the gamma function at positive integers 
count permutations, 
we show that the values of 
the incomplete gamma function at integral values 
%$(n,r) \in\NNpos \times \ZZ $
count cyclic arrangements and derangements  
    in generalized symmetric groups. %\footnote{
    %A generalized symmetric group is a wreath product of 
    %a symmetric group with a cyclic group.} 
%in wreath products of symmetric groups with cyclic groups. 
%an interesting and well-studied family of groups. 
%As it turns out, these counts can also be $p$-adically interpolated 
%which means that they satisfy rather special congruences. 
%which is perhaps unsurprising in view of their close connection with 
%the $p$-adic incomplete gamma function. 

%The notions of derangements, arrangements, as well as both congruences 
%\eqref{eqn:derangementcongruence} and \eqref{eqn:arrangementcongruence}, 
%can be generalized using 
%the wreath products of symmetric groups with cyclic groups,  

In analogy with 
the classical formula $\Gamma(n+1) = n!$ 
we show that 
    \begin{equation}
    \label{eqn:gamma-arrangement}
    \icg[n+1, 1/r] 
    =
        \gs{n}{r}r^{-n}e^{-1/r} 
    \end{equation}
for $(n,r) \in\NNpos \times \ZZ$ where   
\begin{equation}
    \label{eqn:defngs}
    \gs{n}{r} := 
    \begin{cases}
        (-1)^n (\text{$\#$ of $|r|$-cyclic derangements of degree $n$}) 
            & \text{if $r < 0$,}\\
        \text{$\#$ of $r$-cyclic arrangements of degree $n$} 
            & \text{if $r > 0$.}
    \end{cases}
\end{equation} 
Here $C_r$ (resp. $S_n$) denotes 
the cyclic group of order $r$  
(resp. the symmetric group of degree $n$), and  
an \defn{$r$-cyclic derangement of degree $n$} is 
an element of the generalized symmetric group $C_r \wr S_n$ 
whose action on $C_r\times[n]$ has no fixed point, while  
% Assaf showed that many classical formulas for 
% the number of derangements of $[n]$ 
% carry over to the number of cyclic derangements of $[r] \times [n]$. 
an \defn{$r$-cyclic arrangement of degree $n$} is 
% a pair $(A,(f,\sigma))$ consisting of 
a choice of subset $A \subset [n]$ and 
element of $C_r \wr S_{|A|}$ 
(for details see \S\ref{sec:wreath}). 
Assaf \cite{assaf} introduced 
$r$-cyclic derangements 
while $r$-cyclic arrangements 
do not seem to have appeared 
in the combinatorics literature yet. 

\subsubsection{Functions with $m$-values zero or one}

%The third object of this paper 
%is to demonstrate $p$-adic interpolation properties for permutations 
%with restricted cycle lengths. 
%This gives a complementary generalization of the congruence 
%\eqref{eqn:derangementcongruence} for derangements 
%which is distinct from its generalization 
%to wreath products in Theorem~\ref{thm:locanalytic}. 

Our second combinatorial interpretation 
is for functions %$f \colon \NN \to \QQ$ 
    whose $m$-values (cf.~\eqref{eqn:mvalues}) 
    are all in $\{0,1\}$. 
We show that any such function admits 
a combinatorial interpretation 
in terms of cycle-restricted permutations. 
%Let $S_n$ denote 
%the symmetric group on $n$ letters. 

\begin{theorem}\label{thm:combinterpretationmkzeroone}
Let $f \colon \NN \to \QQ$ be a function 
    whose $m$-values %(cf. \eqref{eqn:mvalues}) 
    are all in $\{0,1\}$. 
    Then $f(n)$ is equal to 
    the number $\crp{L}{n}$ 
    of permutations in $S_n$ 
    whose cycle lengths are in 
    $L=\{r : m_r = 1\}$. 
\end{theorem}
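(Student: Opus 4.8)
The plan is to establish the identity through generating functions, using the exponential formula that already appears in the excerpt. Recall that when $f(0)=1$, the $m$-values satisfy
\begin{equation}
    \sum_{n=0}^\infty f(n)\frac{X^n}{n!} = \exp\left(\sum_{k=1}^\infty m_k \frac{X^k}{k}\right).
\end{equation}
First I would observe that the hypothesis $m_r \in \{0,1\}$ means the inner sum collapses: only the indices $r \in L = \{r : m_r = 1\}$ contribute, so the right-hand side becomes $\exp\left(\sum_{r \in L} X^r/r\right)$. The strategy is then to recognize this exponential as the exponential generating function for permutations with all cycle lengths restricted to $L$.

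The key combinatorial input is the exponential formula (equivalently, the cycle-index / species composition for assembling permutations from cycles). A permutation of $[n]$ decomposes uniquely into disjoint cycles, and the number of cyclic permutations of a set of size $r$ is $(r-1)! = r!/r$, whose exponential generating function is $\sum_{r\geq 1} (r!/r)\,X^r/r! = \sum_{r \geq 1} X^r/r$. Restricting the allowed cycle lengths to the set $L$ replaces this by $\sum_{r \in L} X^r/r$, and the exponential formula for labelled structures then gives that the EGF for permutations whose every cycle length lies in $L$ is exactly $\exp\left(\sum_{r \in L} X^r/r\right)$. Thus the second step is to write
\begin{equation}
    \sum_{n=0}^\infty \crp{L}{n}\frac{X^n}{n!} = \exp\left(\sum_{r \in L}\frac{X^r}{r}\right),
\end{equation}
citing the standard exponential formula for the EGF of a permutation-type species built from cycles of permitted lengths.

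The final step is simply to compare the two displayed generating functions: both $\sum f(n) X^n/n!$ and $\sum \crp{L}{n} X^n/n!$ equal $\exp\left(\sum_{r\in L} X^r/r\right)$, so by comparing coefficients of $X^n/n!$ we conclude $f(n) = \crp{L}{n}$ for every $n$. I should note that the argument is purely formal at the level of power series in $\QQ[[X]]$, so no convergence issues arise; the equality of EGFs is an identity of formal power series.

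The main obstacle I anticipate is not the algebra but carefully justifying the combinatorial EGF identity for $\crp{L}{n}$ with the correct normalization. One must be precise that a single cycle on a labelled set of size $r$ contributes weight $1/r$ to the exponent (reflecting the $(r-1)!$ cyclic orderings), and that the $\exp$ correctly accounts for the unordered collection of cycles partitioning $[n]$, including the empty permutation giving $\crp{L}{0} = 1$ to match $f(0)=1$. Once this normalization is pinned down—either by invoking the exponential formula as a black box or by a direct bijective/recursive verification—the comparison of coefficients is immediate.
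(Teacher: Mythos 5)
Your proof is correct and takes essentially the same route as the paper: the paper's Proposition~\ref{prop:EGFcrp}, proved via combinatorial species, is exactly your exponential-formula step $\sum_n \crp{L}{n}X^n/n! = \prod_{r\in L}\exp(X^r/r) = \exp\bigl(\sum_{r\in L}X^r/r\bigr)$, and the comparison with $\exp\bigl(\sum_k m_kX^k/k\bigr)$ coming from the definition of the $m$-values is the same. The one point to flag (shared with the paper's own statement) is that the generating-function identity for $f$ requires $f(0)=1$, which is not among the stated hypotheses; for $n\geq 1$ the conclusion is unaffected since $f(1),\dots,f(n)$ are determined by $m_1,\dots,m_n$ alone.
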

This theorem generalizes a classical formula of 
Chowla--Herstein--Scott \cite{chowla-herstein-scott},  
\begin{equation}
\sum_{n = 0}^\infty 
    \# \{\sigma^\ell = 1 : \sigma \in S_n\} 
    \frac{X^n}{n!}
    = 
    \exp\left( \sum_{d | \ell}\frac{X^d}{d} \right), 
\end{equation}
which is recovered when 
$L$ consists of the divisors of a given integer $\ell$. 

%Daniel Barsky informed us of another instance 
%appearing in the literature. 
%In \cite{ishihara-ochiai-yugen-yoshida}, 
%the $p$-adic valuation of $n \mapsto \crp{\{1,p\}}{n}$ is studied 
%(the permutation count $\crp{\{1,p\}}{n}$ is equal to 
%the number of homomorphisms 
%from $\ZZ/p\ZZ$ to $S_n$, 
%which is equivalent to the count in their title). 

Combining this theorem with 
Theorem~\ref{thm:intromkpinterpolation} 
leads to a vast generalization of 
the observation that $n \mapsto (-1)^nd_n$ 
is $p$-adically continuous. 
Define the count of permutations 
    with `permissible' cycle lengths, 
\begin{equation*} % {{{
\crp{L}{n} := \#\{\sigma \in S_n  \;:\; 
        \text{every cycle length of $\sigma$ is in } L \}
\end{equation*} % }}}
where $L \subset \NN^{> 0}$ is any subset.\footnote
{The unique element of $S_0$ acting on 
the empty set is considered 
to have no cycle lengths; 
thus if $L$ is empty, then 
$d_n^L = \delta_{0,n}$ which is never $p$-adically continuous.} 
%`permitted' cycle lengths. 

%We prove the following $p$-adic interpolation result for $n \mapsto \crp{L}{n}$. 
\begin{theorem}\label{thm:intro-p-derangement}
Let $L$ be a set of positive integers, and let $p \geq 3$ be a prime. 
%Let $\crp{L}{n}$ denote
%    the number of permutations in $S_n$ 
%    whose cycle lengths are in $L$. 
\begin{enumerate}[(a)]
\item 
$n \mapsto \crp{L}{n}$ is $p$-adically continuous 
    if and only if $1\in L$ and $p \not\in L$.
\item 
$n \mapsto (-1)^n \crp{L}{n}$ is $p$-adically continuous if and only if
$1 \not\in L$ and  $p  \in L$.

\item 
$n \mapsto \crp{L}{n}$ is $2$-adically continuous if and only if 
one of the following hold:
\begin{itemize}
\item 
$1 \in L$ and $2 \not \in L$, or

\item 
$1\not\in L$ and $2 \in L$.
\end{itemize}
\end{enumerate}
\end{theorem}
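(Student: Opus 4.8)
The plan is to deduce all three parts from the congruence criterion of Theorem~\ref{thm:intromkpinterpolation}, once the $m$-values of the two relevant functions have been identified. First I would note that both $f(n) = \crp{L}{n}$ and $g(n) = (-1)^n\crp{L}{n}$ take the value $1$ at $n = 0$, since the empty permutation vacuously has all its cycle lengths in $L$, so $\crp{L}{0} = 1$. The exponential generating function underlying Theorem~\ref{thm:combinterpretationmkzeroone},
$$\sum_{n=0}^\infty \crp{L}{n}\,\frac{X^n}{n!} = \exp\Bigl(\sum_{r \in L}\frac{X^r}{r}\Bigr),$$
compared against the defining identity $\sum_n f(n)X^n/n! = \exp\bigl(\sum_k m_k X^k/k\bigr)$, shows that the $m$-values of $f$ are $m_k = 1$ for $k \in L$ and $m_k = 0$ otherwise. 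Replacing $X$ by $-X$ produces the generating function of $g$, so the $m$-values of $g$ are $m_k = (-1)^k$ for $k \in L$ and $m_k = 0$ otherwise. In both cases all $m$-values are integers, hence $p$-integral for every prime $p$, and so Theorem~\ref{thm:intromkpinterpolation} applies: continuity is equivalent to the single congruence $m_p \equiv m_1 - 1 \pmod{p}$.

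It is convenient to set $a = [1 \in L]$ and $b = [p \in L]$ (Iverson brackets). For part (a) the function is $f$, with $m_1 = a$ and $m_p = b$, so the criterion reads $b \equiv a - 1 \pmod p$, equivalently $a - b \equiv 1 \pmod p$. For part (b) the function is $g$; here $m_1 = -a$, and since $p$ is odd $m_p = (-1)^p b = -b$, so the criterion becomes $-b \equiv -a - 1 \pmod p$, equivalently $a - b \equiv -1 \pmod p$. I would then run the finite case analysis over $a, b \in \{0,1\}$, where $a - b$ ranges over $\{-1, 0, 1\}$. When $p \geq 3$ the residues $1, 0, -1$ are distinct modulo $p$, so $a - b \equiv 1$ forces $(a,b) = (1,0)$ and $a - b \equiv -1$ forces $(a,b) = (0,1)$. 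These are exactly the assertions ``$1 \in L$ and $p \notin L$'' of part (a) and ``$1 \notin L$ and $p \in L$'' of part (b).

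For part (c) the function is again $f$, but now $p = 2$, so the criterion $a - b \equiv 1 \pmod 2$ holds if and only if $a + b$ is odd, i.e.\ if and only if exactly one of $1$ and $2$ belongs to $L$. This is precisely the pair of alternatives listed. The place where the hypothesis $p \geq 3$ was used in parts (a) and (b) is exactly where part (c) differs: the case $1 \notin L$, $p \in L$ corresponds to $a - b = -1$, which satisfies $a - b \equiv 1 \pmod p$ only when $2 \equiv 0 \pmod p$, that is, only when $p = 2$. This collapse of the residues $1$ and $-1$ modulo $2$ is what produces the extra continuous case in part (c) relative to part (a).

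I expect no genuine obstacle here: given Theorems~\ref{thm:combinterpretationmkzeroone} and~\ref{thm:intromkpinterpolation}, the statement is a bookkeeping exercise. The only steps requiring care are getting the sign $(-1)^k$ in the $m$-values of $g$ correct and tracking how the residues $\pm 1$ behave modulo $p$, which is what cleanly separates the odd-prime cases from the prime $2$.
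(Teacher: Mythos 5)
Your proposal is correct and follows essentially the same route as the paper: both read off the $m$-values of $\alpha^n\crp{L}{n}$ from the exponential generating function of Proposition~\ref{prop:EGFcrp}, apply the congruence criterion of Theorem~\ref{thm:mkpinterpolation}, and finish with the finite case analysis on whether $1$ and $p$ lie in $L$ (the paper phrases this uniformly via the characteristic function $\charfn{L}$ and $\alpha\in\{\pm1\}$, but that is only a notational difference). Your observation that the residues $1$ and $-1$ collapse exactly when $p=2$ is the same mechanism that produces the extra case in part (c).
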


%To illustrate, we record a few examples here;  
%the reader may find more examples in \S\ref{sec:five}. 
%Note that the set $L$ may be finite or infinite.

%The previous two theorems include 
%the EGF-coefficients of the $\ell$-adic Artin--Hasse exponential function 
%as a special case with $L = \{1,\ell,\ell^2,\ell^3,\ldots\}$. 
%For example, %for $\ell = 3$ we have 
%$E_3(X) = 
%    1 + X + \frac{1}{2}X^{2} + \frac{1}{2}X^{3} + \frac{3}{8}X^{4} + \frac{7}{40}X^{5} + O(X^6) = 
%    %\frac{9}{80}X^{6} + \frac{39}{560}X^{7} + O(X^{8}) = 
%    1 + X + 
%    \frac{X^2}{2!} + 
%    3 \frac{X^{3}}{3!} + 
%    9\frac{X^{4}}{4!} + 
%    21\frac{X^{5}}{5!} + O(X^6)$. 
%    %81\frac{X^{6}}{6!} + 
%    %351\frac{X^{7}}{7!} + O(X^{8})$. 
%Note the $p$-adic properties appear rather differently 
%for coefficients of $X^n$ rather than $\tfrac{X^n}{n!}$. 

%When $L = \{1,p\}$ the function 
%    $n \mapsto d_n^{\{1,p\}}$ is not $p$-adically continuous, 
%    however one may ask whether $n \mapsto u^n d_n^{\{1,p\}}$ 
%    is $p$-adically continuous for some $u \in \CC_p^\times$. 
%Theorem~\ref{thm:intromkpinterpolation} rules out 
%    $u \in \ZZ_p^\times$, and in fact  

%Theorem~\ref{thm:intro-p-derangement} is deduced 
%from Theorem~\ref{thm:intromkpinterpolation} 
Theorem~\ref{thm:intro-p-derangement} 
    is deduced from a more uniform statement, 
    which is that $n \mapsto \alpha^n\crp{L}{n}$ 
    is $p$-adically continuous (for any prime $p$) 
    if and only if 
    $$\alpha( \charfn{L}(1) - \charfn{L}(p))
    \equiv 1 \Mod p$$ 
    where $\alpha$ is an integer and $\charfn{L}$ 
    is the characteristic function of $L$. 

We mention some related articles 
    for the interested reader. 
The function $n \mapsto d_n^{\{1,p\}}$ 
    has some intriguing $p$-adic properties 
    (despite not being $p$-adically continuous) 
    cf. \cite{ishihara-ochiai-yugen-yoshida}, \cite{conrad}. 
Arithmetic aspects of the function $n \mapsto \gs{n}{-1}$ 
    counting derangements 
    have been studied in \cite{sun-zagier}, 
    \cite{sun-wu-zhuang}, 
    \cite{miska}, 
    \cite{serafin}. % TODO look at these papers and make sure they mention derangements...
For any $r \in \ZZ$ 
    the function $n \mapsto \gs{n}{r}$ 
    is a pseudopolynomial (in the sense of \cite{hall-1971}) 
    which is simultaneously 
    $p$-adic locally analytic 
    for every prime $p$ 
    (Theorem~\ref{thm:gsinterpolates}). 
Generalizations of pseudopolynomials have been 
recently studied in 
\cite{kuperberg2021pseudopolynomials}, 
\cite{eric2021primary}. 
The assertion that 
$n \mapsto \gs{n}{r}$ is 
$p$-adic locally analytic was 
first proven for $r = -1$ 
(counting derangements) 
in \cite{barsky1981analyse} using a formal Laplace transform, 
and for $r = 1$ (counting arrangements) 
more recently in \cite{pacol-zaharescu-2019}. 

%\subsubsection{Generating function identity} % {{{
%
%Along the way to proving Theorem~\ref{thm:intro-p-derangement}, 
%we prove the generating function identity  
%\begin{equation}
%    \label{eqn:intro-EGF-identity}
%    \sum_{n\geq 0} \crp{L}{n} \frac{X^n}{n!}
%    =  \exp\left( \sum_{\substack{r \in L}}\frac{X^r}{r}\right) 
%\end{equation}
%which holds for an arbitrary subset $L \subset \NNpos$ 
%(Proposition~\ref{prop:EGFcrp}). 
%This identity is known in some special cases where 
%$\crp{L}{n}$ admits a group-theoretic interpretation.  
%For instance, when $L$ is equal to the set of divisors of a given integer $\ell$, 
%$\crp{L}{n}$ is equal to the number of homomorphisms of $\ZZ/\ell\ZZ$ to $S_n$.  
%In this case, the identity \eqref{eqn:intro-EGF-identity} is 
%a result of Chowla--Herstein--Scott \cite{chowla-herstein-scott}. 
%When $L_p = \{1,p,p^2,\ldots\}$ is the set of prime powers of a given prime $p$, 
%the identity \eqref{eqn:intro-EGF-identity} can be obtained 
%by a limiting argument from the case in \cite{chowla-herstein-scott}. 
%In this case $\crp{L_p}{n}$ is equal to the number of continuous 
%homomorphisms of $\ZZ_p$ to $S_n$. 
%This type of argument can explain some more sporadic cases 
%which also admit group-theoretic interpretations, 
%but otherwise \eqref{eqn:intro-EGF-identity} does not appear to follow from 
%an application of known results of this type. 
%
%% }}}

\subsection*{Notation} % {{{

We use $\NN$ to denote the set of nonnegative integers 
and $\NNpos$ for the set of positive integers. 
We let $\ZZ_p$ (resp. $\QQ_p$) denote 
the ring of $p$-adic integers 
(resp. field of $p$-adic numbers). 
We use $|\cdot|_p$ (resp. $v_p$) to denote 
the field norm 
(resp. $p$-adic additive valuation) on $\QQ_p$. 
We let $\CC_p$ denote the metric completion of 
an algebraic closure of $\QQ_p$. 
The $p$-adic valuation $v_p$ and the norm $|\cdot|_p$ 
on $\QQ_p$ both extend uniquely to $\CC_p$  
and are denoted by the same symbols.  

% }}}

% }}}

\section{Preliminaries}\label{sec:two} % {{{

In this section we briefly recall 
some standard results from 
$p$-adic analysis for the reader's convenience. 
For more background 
we refer to \cite{schikhof}. 
%Let $p$ denote a prime number. 
%Let $K$ be a subfield of $\CC_p$. 

Given a function $f \colon \NN \to \CC_p$, 
the basic question of $p$-adic interpolation is whether $f$ 
extends to a continuous mapping 
(Lipschitz function, 
locally analytic function, 
analytic function, etc.) 
$\tilde{f} \colon \ZZ_p \to \CC_p$. 
%In other words, whether there exists 
%a function $f_p$ fitting into a commutative diagram:  
%\begin{equation}
%\begin{tikzcd}
%    \NN \arrow[d,"\subset"] \arrow[r,"f"]&\CC_p \arrow[d,"="] \\
%    \ZZ_p \arrow[r,"f_p"] & \CC_p
%\end{tikzcd}
%\end{equation}
%where $f_p$ is continuous in the $p$-adic topology.
%If a continuous extension exists 
%then it is unique. 
%since $\NN \subset \ZZ_p$ 
%is dense in the $p$-adic topology.
This question is generally approached 
using finite differences. 
%For a function $f$ defined on $\NN$, 
The \defn{$n$th finite difference $\findiff[n]$ of $f$ (at zero)} is 
\begin{equation}\label{eqn:findiff} % {{{
    \findiff[n]:= 
    \sum_{k=0}^n
        (-1)^{n-k}\binom{n}{k}
        f(k) . 
\end{equation} % }}}
This admits an inverse relation given by 
%\begin{equation}\label{eqn:inversefindiff}
$f(n) = \sum_{k=0}^n \binom{n}{k} \findiff[k]$ 
for $n \geq 0$. 
%\,\,\, \text{for all }n\geq 0 .
%\end{equation}

A classical theorem of Mahler \cite{mahler} says that $f$ extends 
to a continuous function on $\ZZ_p$ if and only if 
$|\findiff[n]|_p \to 0$ as $n\to \infty$. 
More generally, faster rates of $p$-adic decay for 
the finite differences 
$\findiff[n]$ correspond to 
stronger $p$-adic properties of $f$. 
%For our purposes, the relevant classes of functions are 
%metric maps, %(Lipschitz functions with Lipschitz constant one), 
%$p$-adic locally analytic functions, and 
%$p$-adic analytic functions. 
%We now consider the corresponding conditions on finite differences 
%for each of these in turn. 

\begin{definition}
    A function $f \colon X \to Y$ of metric spaces 
    is a \defn{metric map} if  
        $d_Y(f(x),f(y))
        \leq 
        d_X(x,y)$ 
        for all $x,y \in X$.
    (Equivalently, $f$ is Lipschitz continuous with constant one.) 
If $D$ is an open disk of $\CC_p$, 
    a function $f \colon D \to \CC_p$ is \defn{analytic} if 
    there are elements $a_0 \in D$ and 
    $c_0,c_1,\ldots \in \CC_p$ such that 
    $f(x) = \sum_{n = 0}^\infty c_n(x - a_0)^n$ 
    for all $x \in D$. 
    If $U$ is an open subset of $\CC_p$, 
    a function $f\colon U \to \CC_p$ 
    is \defn{locally analytic} if for each $a \in U$ 
    there is an open disk $D \subset U$ containing $a$ such that $f|_{D}$ 
    is analytic. 
\end{definition}

\begin{theorem}\label{thm:allinterpolations} % {{{
Let $f \colon \NN \to \CC_p$ be a function 
and let $\findiff[n]$ 
    be given by \eqref{eqn:findiff}.
Then $f$ extends to: 
\begin{enumerate}
    \item a continuous map on $\ZZ_p$ 
        if and only if 
        $$|\findiff[n]|_p \to 0
        \,\,\,\text{as $n \to \infty$,}$$
    \item a metric map on $\ZZ_p$ 
        if and only if 
        $$
        \sup_{n \geq 1} |\findiff[n]|_p\, p^{\floor{\frac{\log n}{\log p}} }
        \leq 1, 
        $$
    \item an analytic function on 
        the closed unit disk 
        if and only if 
        $$|\findiff[n]/n!|_p \to 0
        \,\,\,\text{as $n \to \infty$},$$
    \item a locally analytic function on 
        the closed unit disk 
            if and only if 
            $$ \liminf_{n\to\infty} 
                \left(\frac{\valn[p](\findiff[n])}{n}\right) >0.$$
\end{enumerate}
\end{theorem}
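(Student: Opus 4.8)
All four characterizations flow from a single device, the Mahler expansion. The inverse relation $f(n)=\sum_{k=0}^n\binom{n}{k}\findiff[k]$ recorded above suggests the candidate extension
$$\tilde f(x)=\sum_{n=0}^\infty\findiff[n]\binom{x}{n},\qquad\binom{x}{n}=\frac{x(x-1)\cdots(x-n+1)}{n!},$$
and the entire theorem reduces to determining, for each regularity class, exactly when this series represents a function of the required type. I would begin by recording two structural facts about the binomial functions. First, each $\binom{\cdot}{n}$ takes values in $\ZZ_p$ on the dense subset $\NN\subset\ZZ_p$ and is continuous, so $\binom{\cdot}{n}\colon\ZZ_p\to\ZZ_p$ with $\|\binom{\cdot}{n}\|_\infty=1$. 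Second, the family $\{\binom{\cdot}{n}\}_{n\ge0}$ is an orthonormal basis of the Banach space $C(\ZZ_p,\CC_p)$ of continuous functions under the sup-norm. Orthonormality has the key consequence that the coefficients of a continuous function in this basis are unique and are recovered precisely as its finite differences; thus if $f$ admits any continuous extension its coefficients must be the $\findiff[n]$, which pins down $\tilde f$ as the only candidate and reduces each part to a growth estimate on the sequence $(\findiff[n])$.

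Part (1) is then Mahler's theorem \cite{mahler}: since $\|\binom{\cdot}{n}\|_\infty=1$, the series $\sum\findiff[n]\binom{\cdot}{n}$ converges uniformly, hence to a continuous function, exactly when $|\findiff[n]|_p\to0$, and the converse is immediate from the orthonormal-basis property. For part (2) I would compute the optimal Lipschitz constant of each basis function. The crucial lemma is that
$$\Big\|\binom{\cdot}{n}\Big\|_{\mathrm{Lip}}:=\sup_{x\neq y}\frac{\big|\binom{x}{n}-\binom{y}{n}\big|_p}{|x-y|_p}=p^{\floor{\frac{\log n}{\log p}}}\qquad(n\ge1),$$
and that the rescaled functions $p^{\floor{\frac{\log n}{\log p}}}\binom{\cdot}{n}$ (for $n\ge1$) form an orthonormal system for the Lipschitz seminorm. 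Granting this, the Lipschitz seminorm of $\tilde f$ equals $\sup_{n\ge1}|\findiff[n]|_p\,p^{\floor{\frac{\log n}{\log p}}}$; the constant term contributes nothing, and $\tilde f$ is a metric map precisely when this supremum is $\le1$, which is the stated criterion.

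Parts (3) and (4) I would handle by passing from the Mahler basis to the falling-factorial basis $(x)_n:=n!\binom{x}{n}=x(x-1)\cdots(x-n+1)$, so that $\tilde f(x)=\sum_n(\findiff[n]/n!)\,(x)_n$. The falling factorials are monic polynomials with integer coefficients, and since the transition matrix to the monomial basis $\{x^m\}$ is unipotent over $\ZZ_p$, they form an orthonormal basis of the Tate algebra $\CC_p\langle x\rangle$ of analytic functions on the closed unit disk. Hence $\tilde f$ extends analytically to the closed unit disk if and only if its falling-factorial coefficients tend to zero, that is $|\findiff[n]/n!|_p\to0$, giving (3). For (4), local analyticity is governed by the Amice-type growth condition on Mahler coefficients (see \cite{schikhof}): $\tilde f$ is locally analytic if and only if the coefficients decay geometrically, $|\findiff[n]|_p\le p^{-\epsilon n}$ for some $\epsilon>0$ and all large $n$, which is exactly the requirement $\liminf_{n\to\infty}\valn[p](\findiff[n])/n>0$. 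Consistency is visible from the estimate $\valn[p](n!)\sim n/(p-1)$: condition (3) forces a slope at least $1/(p-1)$, so analyticity on the full disk indeed implies local analyticity.

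The principal obstacle is the sharp Lipschitz computation underlying part (2). Establishing the value $p^{\floor{\frac{\log n}{\log p}}}$ exactly, rather than up to a bounded factor, requires a careful analysis of $\valn[p]\!\big(\binom{x}{n}-\binom{y}{n}\big)$ as a function of $\valn[p](x-y)$, and the accompanying orthogonality statement is what upgrades this pointwise bound into control of the Lipschitz seminorm of an arbitrary series. A secondary subtlety arises in (4), where one must check that the radius on which the falling-factorial (equivalently Mahler) series converges translates correctly, under a rescaling $x\mapsto p^{k}x$ of the variable, into the displayed $\liminf$ condition; for both of these I expect to invoke the orthonormal-basis and locally-analytic characterizations of \cite{schikhof} rather than reprove them from scratch.
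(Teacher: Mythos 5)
Your proposal is correct and takes essentially the same route as the paper: the paper proves this theorem purely by citation (Mahler for (1), Robert/O'Desky for (2), Schikhof Theorem 54.4 for (3), Amice for (4)), and your argument is the standard Mahler-basis assembly of exactly those results, deferring to the same sources at the two genuinely hard points (the sharp Lipschitz constant $p^{\floor{\log n/\log p}}$ of $\binom{\cdot}{n}$ and Amice's locally analytic criterion). The extra detail you supply --- uniqueness of Mahler coefficients, the falling-factorial orthonormal basis of the Tate algebra via the unipotent Stirling transition matrix --- is accurate and consistent with the cited literature.
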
 % }}}

\begin{proof}
The first assertion is due to Mahler \cite{mahler}. 
For the second 
    see \cite[\S5.1]{robert-2000}, or 
    \cite[Proposition~2]{odesky2021functions} 
    for a generalization. 
The third is \cite[Theorem~54.4]{schikhof}. 
The fourth is due to Amice 
    \cite[III, \S10, Corollaire 2, p.~162]{amice-1964}. 
\end{proof}

\section{\texorpdfstring{$p$}{p}-adic continuity via congruences}\label{sec:fivev5} % {{{

%Let $K$ be a complete valued field with 
%    a nonarchimedean valuation $|\cdot|$. 

In this section, we give 
    a new characterization of 
    $p$-adic continuity 
    for a class of functions satisfying 
    certain integrality conditions. 
We then apply this characterization to a family of 
combinatorial sequences 
which count permutations with restricted cycle types. 

Our characterization uses universal polynomials 
    from the theory of symmetric functions. 
Let 
$h_k = \sum_{i_1 \leq \cdots \leq i_k} x_{i_1} \cdots x_{i_k}$ 
denote the $k$th complete homogeneous symmetric function. 
For each positive integer $k$ 
there is a unique polynomial $M_k(h_1,\ldots,h_k)$ satisfying
    $M_k(h_1,\ldots,h_k) = %\sum_{j=1}^\infty x_j^k.
    p_k = x_1^k + x_2^k + x_3^k + \cdots.$ 
%The $m_k$ are universal polynomials in the values of $f$ at positive integers.\footnote{

\begin{definition}
Let $f \colon \NN \to \QQ_p$ be a function. 
    %satisfying $f(0) = 1$. 
The \defn{$k$th $m$-value of $f$} is 
\begin{equation}%\label{eqn:mvalues} % {{{
    m_k \coloneqq 
    %(-1)^{k+1}N_k\left(\tfrac{f(1)}{1!},\tfrac{f(2)}{2!},\ldots,\tfrac{f(k)}{k!}\right).
    M_k\left(\tfrac{f(1)}{1!},\tfrac{f(2)}{2!},\ldots,\tfrac{f(k)}{k!}\right).
\end{equation} % }}}
\end{definition}

The main result of this section is 
the following theorem. 
%with exponential generating function 
%$F = \sum_{n \geq 0}f(n)X^n/n!$. 
%Let $m_1,m_2,\ldots$ be arbitrary elements of 
%the closed unit disk $\{z \in \CC_p : |z|_p \leq 1\}$. 

\begin{theorem}[Theorem~\ref{thm:intromkpinterpolation}]\label{thm:mkpinterpolation} 
Let $f \colon \NN \to \QQ_p$ 
%\colon n \mapsto f_n$ 
    be a function 
    satisfying $f(0) = 1$. 
%    and let $m_r(f)$ be defined by \eqref{eqn:mvalues}. 
Suppose $m_k$ is $p$-integral 
    for all $k\geq 2$. 
Then $f$ is $p$-adically continuous  
    if and only if 
    $$%(-1)^{p+1}N_p(f(1),f(2)/2!,\ldots,f(p)/p!) 
    M_p\left(\tfrac{f(1)}{1!},\tfrac{f(2)}{2!},\ldots,\tfrac{f(p)}{p!}\right)
    \equiv f(1) - 1 \Mod {p\ZZ_p}.$$ 
\end{theorem}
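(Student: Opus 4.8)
The plan is to convert continuity into decay of finite differences and then read everything off the shape of the exponential generating function. By Mahler's theorem (Theorem~\ref{thm:allinterpolations}(1)), $f$ is $p$-adically continuous if and only if its finite differences satisfy $|\findiff[n]|_p\to0$. Using $f(0)=1$ and the identity $\sum_n f(n)X^n/n!=\exp\!\big(\sum_k m_k X^k/k\big)$, I would multiply by $e^{-X}$ to obtain the finite-difference generating function $G(X):=\sum_n\findiff[n]X^n/n!=e^{-X}\sum_n f(n)X^n/n!=\exp(L(X))$, where $L(X)=\mu_1 X+\sum_{k\ge2}\mu_k X^k/k$ with $\mu_1=f(1)-1=m_1-1$ and $\mu_k=m_k$ for $k\ge2$. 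By hypothesis $\mu_k\in\ZZ_p$ for $k\ge2$, and the asserted congruence is exactly $\mu_p\equiv\mu_1\Mod p$. Extracting coefficients via the exponential formula gives the weighted permutation sum $\findiff[n]=\sum_{\sigma\in S_n}\prod_{C}\mu_{|C|}$, the product running over the cycles $C$ of $\sigma$; this is what I will reduce modulo $p$. So the theorem becomes: $\findiff[n]\to0$ if and only if $\mu_p\equiv\mu_1\Mod p$.

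The technical heart is a reduction isolating the two \emph{critical} modes $k=1$ and $k=p$. Let $\mathcal G$ be the set of series $\sum_n d(n)X^n/n!$ with $d(n)\to0$ and $\inf_n \valn[p](d(n))>-\infty$. A short binomial-convolution estimate shows that $\mathcal G$ is closed under multiplication and that multiplying any series with $d(n)\to0$ by an element of $\mathcal G$ preserves $d(n)\to0$. The key lemma I must prove is that $\exp(\delta X^k/k)\in\mathcal G$ for every $\delta\in\ZZ_p$ whenever $k\neq 1,p$, and also when $k\in\{1,p\}$ provided $\delta\in p\ZZ_p$. The mechanism is that the coefficient of $X^{km}$ in $\exp(\delta X^k/k)$ has valuation $\valn[p]((km)!)-\valn[p](m!)+m(\valn[p](\delta)-\valn[p](k))$, whose growth rate in $m$ is $\epsilon_k:=\tfrac{k-1}{p-1}-\valn[p](k)$; one checks $\epsilon_k>0$ for all $k\neq1,p$, while $k=1$ and $k=p$ are exactly the boundary cases $\epsilon_k=0$. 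Granting this, $\exp(L)=\exp(\mu_1X+\mu_pX^p/p)\cdot H$ with $H=\exp\!\big(\sum_{k\neq1,p}\mu_kX^k/k\big)\in\mathcal G$, so $\findiff[n]\to0$ if and only if the same holds for $G_0:=\exp(\mu_1X+\mu_pX^p/p)$. I expect the \textbf{main obstacle} to be making the $\mathcal G$-membership of $H$ \emph{uniform} in $n$: the factor-by-factor valuations must be assembled into one estimate valid for all partitions of $n$ into parts $\neq1,p$, which is a genuine (if routine) optimization rather than a one-line bound.

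For $G_0$ the finite differences have the closed form $\findiff[n]=\sum_{b}\binom{n}{pb}u_b\,\mu_1^{n-pb}\mu_p^b$, where $u_b=(pb)!/(p^bb!)$ is a $p$-adic unit with $u_b\equiv(-1)^b\Mod p$. If $\mu_1\notin\ZZ_p$, then the $b=0$ term $\mu_1^n$ strictly dominates (its valuation is smaller than every $b\ge1$ term by $\valn[p]\binom{n}{pb}+pb\,|\valn[p](\mu_1)|+b\,\valn[p](\mu_p)>0$), so $\valn[p](\findiff[n])=n\,\valn[p](\mu_1)\to-\infty$; in particular $\findiff[n]\not\to0$ and the congruence cannot hold. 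If $\mu_1\in\ZZ_p$ but $\mu_p\not\equiv\mu_1\Mod p$, then Lucas's theorem leaves only $b\in\{0,p^{N-1}\}$ in $\findiff[p^N]$, giving $\findiff[p^N]\equiv\mu_1-\mu_p\not\equiv0\Mod p$ for every $N\ge1$ (using Wilson's theorem, $u_{p^{N-1}}\equiv-1$); hence $|\findiff[p^N]|_p=1\not\to0$. Via the reduction of the previous paragraph, the same conclusions transfer to the original $f$, establishing necessity.

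Conversely, suppose $\mu_1\in\ZZ_p$ and $\mu_p\equiv\mu_1\Mod p$, with common residue $a\in\{0,\ldots,p-1\}$. Multiplying $G_0$ by $\exp((\mu_1-a)X)\exp((\mu_p-a)X^p/p)$, both in $\mathcal G$ since $\mu_1-a,\mu_p-a\in p\ZZ_p$, reduces the question to $\exp(a(X+X^p/p))$. This I would compare with the Artin--Hasse exponential $E(X)=\exp\!\big(\sum_{j\ge0}X^{p^j}/p^j\big)$ via $\exp(a(X+X^p/p))=E(X)^a\cdot\exp\!\big(-a\sum_{j\ge2}X^{p^j}/p^j\big)$: the first factor lies in $\ZZ_p[[X]]$ by the classical integrality of $E(X)$ (Dwork's lemma), so its finite differences lie in $n!\,\ZZ_p$ and tend to $0$, while the second factor is a product of single modes $\exp(-aX^{p^j}/p^j)$ with $p^j\neq1,p$, hence in $\mathcal G$. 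The product lemma then yields $\findiff[n]\to0$ for $\exp(a(X+X^p/p))$, for $G_0$, and finally for $\exp(L)$. The only external input is the integrality of $E(X)$; everything else is the valuation bookkeeping of the second paragraph, which is precisely why the uniform estimate there is the crux.
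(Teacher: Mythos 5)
Your proposal is correct in substance and follows the same overall architecture as the paper: pass to the generating function $e^{-X}\sum_n f(n)X^n/n! = \exp(\mu_1 X + \sum_{k\geq 2}\mu_kX^k/k)$ of the finite differences, observe that the single mode $\exp(\delta X^k/k)$ with $\delta\in\ZZ_p$ is "good'' precisely when $v_p(\delta) > v_p(k) - \tfrac{k-1}{p-1}$ (your $\epsilon_k$ is exactly the paper's threshold in Lemma~\ref{lemma:one}), so that $k=1$ and $k=p$ are the only critical modes, and then transfer the question to $\exp(\mu_1X+\mu_pX^p/p)$ through an invertible factor. Where you diverge is in how the two critical modes are dispatched. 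The paper multiplies $\zeta_m$ by $(1-X)^{m_p}=\prod_{r\geq1}\exp(-m_pX^r/r)$, which kills the $X^p/p$ mode outright and reduces everything to the single exponential $\exp((m_1-m_p)X)$, to which Lemma~\ref{lemma:one} applies directly; you instead split off the common residue $a$ via the Artin--Hasse exponential $E(X)^a$ for sufficiency, and for necessity you compute $\findiff[p^N]\equiv\mu_1-\mu_p\Mod p$ by Lucas and Wilson. Both routes work (your Lucas/Wilson computation checks out, including the unit $u_b\equiv(-1)^b$), and your necessity argument correctly covers the case $\mu_1\notin\ZZ_p$ separately; just make sure to note that the transfer back to $\exp(L)$ in the necessity direction requires the auxiliary factor $H$ to be \emph{invertible} in your class $\mathcal G$, which holds because $H^{-1}=\exp(-\sum_{k\neq1,p}\mu_kX^k/k)$ is of the same form.

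The one step you leave open --- the "main obstacle'' of showing that the infinite product $H=\prod_{k\neq1,p}\exp(\mu_kX^k/k)$ lies in $\mathcal G$ uniformly in $n$ --- is precisely the paper's Lemma~\ref{lemma:two}, and it is worth knowing that no partition-by-partition optimization is needed. The paper works in the Banach algebra $(\EGFalg{},\norm)$ with $\norm[F]=\sup_n|c_n|_p$ and shows $\norm[\exp(m_rX^r/r)-1]\leq|(r-1)!|_p$, whence the partial products $G_N$ satisfy $\norm[G_M-G_N]\leq|N!|_p\to0$ and converge by completeness; membership of the limit in $\EGFalg{}$ is then automatic. Your proposed direct estimate (bounding $\sum_j\lambda[j]\epsilon_j$ from below linearly in $n$ over all partitions into parts $\neq1,p$) can also be made to work, but until one of these is actually carried out the argument has a genuine hole at its technical core, so this is the step you should write out in full.
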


It is rather surprising that continuity 
    for functions with $p$-integral $m$-values 
    can be detected 
    using just $m_p$ and $m_1$, as these $m$-values only depend 
    on the values of $f$ on the integers $1,2,\ldots,p$. 
Thus the assumption of $p$-integrality of $m$-values 
    rigidly couples the values of $f$, 
    in the sense that 
    if $f$ has $p$-integral $m$-values 
    and $m_p \equiv m_1-1 \Mod p$, 
    then if any value of $f$ 
    on an integer greater than $p$ is modified, 
    the $m$-values of $f$ cease to be $p$-integral.

%\begin{remark}
%The condition on $p$-integrality of 
%    the $m_k(f)$ is clearly necessary 
%    as $m_p(f)$ is independent of 
%    the values of $f$ on integers greater than $p$. 
%\end{remark}

%$G(X) = e^{-X} F(X)$ is in $\EGFalg{\QQ_p}$.

\begin{example}\label{example:morita}
%Suppose $p$ is odd. 
Morita's $p$-adic gamma function is defined by 
    \begin{equation}\label{eqn:exampledefiningreln} % {{{
        \Gamma_{p,\mathrm{M}}(z) = 
        \lim_{n\to z^+}
        (-1)^{n} \prod_{1 \leq k < n, 
    p \nmid k}k
\end{equation} % }}}
where $n$ approaches $z \in \ZZ_p$ 
    through positive integers. 
%It is well-known that 
Set $f(n) = -\Gamma_{p,\mathrm{M}}(n+1)$. 
    %so that $f(0) = 1$. 
Then 
\begin{equation} % {{{
    \sum_{n=0}^\infty f(n)\frac{X^n}{n!}
= \frac{1+X^p}{1+X} e^{-X^p/p},
\end{equation} % }}}
%where $\Gamma_p$ is 
%the Morita $p$-adic gamma function. 
%Consider the function $f \colon \NN \to \CC_p$ 
%    whose $m_k(f)$ are given by 
    and from this one finds (for $p$ odd) 
\begin{equation} % {{{
m_k = 
\begin{cases}
    (-1)^k & \text{if $k \not \equiv 0 \Mod p$,}\\
    p-2 & \text{if $k = p$,}\\
    (-1)^k(1-p) & \text{if $k \equiv 0 \Mod p$ 
    and $k > p$.}
\end{cases}
\end{equation} % }}}
    Then $m_p = p-2 \equiv (-1) -1 \Mod p$ 
    so we conclude 
    $n \mapsto -\Gamma_{p,\mathrm{M}}(n+1)$,  
    and therefore 
    $n \mapsto \Gamma_{p,\mathrm{M}}(n)$ also, 
    is $p$-adically continuous. 
    %(cf. Theorem~\ref{thm:mkpinterpolation}). 
    %$f_p\colon\ZZ_p \to \ZZ_p$. 
%The defining relation 
%    \eqref{eqn:exampledefiningreln} 
%    shows $f_p(x) = -\Gamma_p(x+1)$ 
%    where $\Gamma_p$ is the Morita $p$-adic gamma function \cite{morita-1975}. 
\end{example}

\subsection{The universal polynomials \texorpdfstring{$M_k$}{M}}\label{sec:univpolys}

It is well-known that $\{h_k\}_k$ and $\{p_k\}_k$ 
    are algebraic bases of 
    the ring of symmetric functions, 
and the universal polynomials $M_1,M_2,M_3,\ldots$ 
    express the change of basis from the power basis $\{p_k\}_k$ 
    to the complete homogeneous basis $\{h_k\}_k$. 

The $\{M_k\}_{k}$ can be computed using 
    the classical identity 
\begin{equation}\label{eqn:classicalidentityph} % {{{
\sum_{k=1}^\infty
    \frac{p_k}{k}
    X^k
     = 
%\sum_{k=1}^\infty
%    \frac{M_k(h_1,\ldots,h_k)}{k}
%    X^k
%     = 
    \log\left(
    \sum_{n=0}^\infty h_n X^n\right).
\end{equation} % }}}
From this one finds 
{\small
\begin{align} % {{{
%\sum_{k=1}^\infty
%    \frac{p_k}{k}
%    X^k
\sum_{k=1}^\infty
    %M_k(h_1,\ldots,h_k)
    M_k
    &\frac{X^k}{k}
        %\log\left(
    %\sum_{n=0}^\infty h_n X^n\right)\\
    = h_{1} X +\\
    & \left( -h_{1}^{2} + 2 h_{2} \right) \frac{X^{ 2 }}{ 2 }+\\
    & \left( h_{1}^{3} - 3 h_{1} h_{2} + 3 h_{3} \right) \frac{X^{ 3 }}{ 3 }+\\
    & \left( -h_{1}^{4} + 4 h_{1}^{2} h_{2} - 2 h_{2}^{2} - 4 h_{1} h_{3} + 4 h_{4} \right) \frac{X^{ 4 }}{ 4 }+\\
    & \left( h_{1}^{5} - 5 h_{1}^{3} h_{2} + 5 h_{1} h_{2}^{2} + 5 h_{1}^{2} h_{3} - 5 h_{2} h_{3} - 5 h_{1} h_{4} + 5 h_{5} \right) \frac{X^{ 5 }}{ 5 }+ \cdots . 
%\left( -h_{1}^{6} + 6 h_{1}^{4} h_{2} - 9 h_{1}^{2} h_{2}^{2} - 6 h_{1}^{3} h_{3} + 2 h_{2}^{3} + 12 h_{1} h_{2} h_{3} + 6 h_{1}^{2} h_{4} - 3 h_{3}^{2} - 6 h_{2} h_{4} - 6 h_{1} h_{5} + 6 h_{6} \right) \frac{X^{ 6 }}{ 6 }+
\end{align} % }}}
}
For a function $f$ satisfying $f(0) = 1$, 
set $h_n = f(n)/n!$ for all $n$, 
and write $f_k= f(k)$ for readability. 
Then 
{\small
\begin{align} % {{{
%\sum_{k=1}^\infty
%    \frac{p_k}{k}
%    X^k
\sum_{k=1}^\infty
    m_k
    &\frac{X^k}{k}
    %\log\left(
    %\sum_{n=0}^\infty h_n X^n\right)\\
    = f_{1} X+\\
    & \left( -f_{1}^{2} + f_{2} \right) \frac{X^{ 2 }}{ 2 }+\\
    & \left( f_{1}^{3} - \frac{3}{2} f_{1} f_{2} + \frac{1}{2} f_{3} \right) \frac{X^{ 3 }}{ 3 }+\\
    & \left( -f_{1}^{4} + 2 f_{1}^{2} f_{2} - \frac{1}{2} f_{2}^{2} - \frac{2}{3} f_{1} f_{3} + \frac{1}{6} f_{4} \right) \frac{X^{ 4 }}{ 4 }+\\
    & \left( f_{1}^{5} - \frac{5}{2} f_{1}^{3} f_{2} + \frac{5}{4} f_{1} f_{2}^{2} + \frac{5}{6} f_{1}^{2} f_{3} - \frac{5}{12} f_{2} f_{3} - \frac{5}{24} f_{1} f_{4} + \frac{1}{24} f_{5} \right) \frac{X^{ 5 }}{ 5 }+ \cdots . 
%\left( -f_{1}^{6} + 3 f_{1}^{4} f_{2} - \frac{9}{4} f_{1}^{2} f_{2}^{2} - f_{1}^{3} f_{3} + \frac{1}{4} f_{2}^{3} + f_{1} f_{2} f_{3} + \frac{1}{4} f_{1}^{2} f_{4} - \frac{1}{12} f_{3}^{2} - \frac{1}{8} f_{2} f_{4} - \frac{1}{20} f_{1} f_{5} + \frac{1}{120} f_{6} \right) \frac{X^{ 6 }}{ 6 }+
\end{align} % }}}
}

One can expand the power series 
    for the logarithm in \eqref{eqn:classicalidentityph} 
    to find 
    $$\frac{M_k}{k} = \sum_{\lambda \vdash k}
    \frac{(-1)^{\ell(\lambda)+1}}{\ell(\lambda)}
    \binom{\ell(\lambda)}{\lambda[1],\ldots,\lambda[k]}
    \prod_{j=1}^kh_j^{\lambda[j]}$$ 
    and 
    $$\frac{m_k}{k} = \sum_{\lambda \vdash k}
    \frac{(-1)^{\ell(\lambda)+1}}{\ell(\lambda)}
    \binom{\ell(\lambda)}{\lambda[1],\ldots,\lambda[k]}
    \prod_{j=1}^k\left(\frac{f(j)}{j!}\right)^{\lambda[j]}$$ 
    where the sum is taken over 
    partitions $\lambda$ of $k$, 
    $\ell(\lambda)$ is 
    the number of parts of $\lambda$, 
    and $\lambda[j]$ is the number of parts 
    of $\lambda$ equal to $j$. 

The $m$-values of a function may also be computed rapidly from 
the recursive relation ($k \geq 2$) 
$$m_{k} = 
\frac{f(k)}{(k-1)!} - \sum_{j=1}^{k-1} 
    \frac{f(k-j)}{(k-j)!}
     m_{j}.$$ 

%\begin{theorem}\label{thm:msequenceLA}
%%\label{thm:msequence}
%Let $m_1,m_2,\ldots \in \{z \in \CC_p : |z|_p \leq 1\}$. 
%Then $\zeta_m$ is in $\LA$
%%the ring of e.g.f.'s whose coefficients approach zero in $\QQ_p$
%if and only if 
%$ m_1 \equiv m_p \Mod p$.
%\end{theorem}

%The proof of the theorem is 
%    completed in two steps.  
%First 
%    $1 + f_1X + f_2\frac{X^2}{2!} + \cdots 
%    = 
%    \exp\left(m_1(f)X + m_2(f) \frac{X^2}{2} + 
%    m_3(f)\frac{X^3}{3} + \cdots\right)$. 

\subsection{Proof of Theorem~\ref{thm:mkpinterpolation}} 

The key technical result in the proof of 
Theorem~\ref{thm:mkpinterpolation} 
is Theorem~\ref{thm:msequenceLA}, 
which gives necessary and sufficient conditions 
for the convergence of 
certain infinite products of exponentials. 
The convergence takes place in a $p$-adic Banach algebra $\EGFalg{}$ 
of exponential generating functions 
associated to $p$-adically continuous functions. 

\begin{definition} 
%    \begin{equation} % {{{
%    F(X)= \sum_{n =0}^\infty
%            a_n \frac{X^n}{n!} 
%            \quad\text{where}\quad
%            |a_n|_p \to 0 \text{ as }n\to \infty.
%    \end{equation} % }}}
%\end{definition} 
%We will equip $\EGFalg{\QQ_p}$ with a norm which makes it 
%into a Banach $\QQ_p$-algebra. 
%Define the norm of 
%$F(X)= \sum_{n =0}^\infty a_n \frac{X^n}{n!} 
%\in \QQ_p[\![X]\!]$ to be 
%\begin{equation}\label{eqn:defnnorm}
%    \norm[F]= \sup_{n \geq 0} |a_n|_p.
%\end{equation}
Let $\EGFalg{\QQ_p}$ denote the set of all formal power series 
    $\sum_{n =0}^\infty
            c_n \frac{X^n}{n!} \in \QQ_p[\![X]\!]$
    with the property that 
    $c_n =
    \sum_{k=0}^n
        \binom{n}{k}
        (-1)^{n-k}
        f(k)$ 
    for some continuous function 
    $f \colon \ZZ_p \to \QQ_p$. 
%Let $\LA$ denote the subspace of $\EGFalg{}$ 
%    corresponding to functions $f$ which are 
%    $p$-adic locally analytic. 
Define the norm of 
    $F= \sum_{n =0}^\infty c_n \frac{X^n}{n!}$ to be 
\begin{equation}\label{eqn:defnnorm} % {{{
    \norm[F] \coloneqq \sup_{n \geq 0} |c_n|_p. 
\end{equation} % }}}
We say that $c_n$ is the \defn{$n$th EGF-coefficient} 
    of $\sum_{n =0}^\infty c_n \frac{X^n}{n!}$. 
\end{definition} 
%We equip $\LA$ with a norm which makes it 
%into a Banach $\QQ_p$-algebra. 

\begin{proposition}\label{prop:closedundermult}
    %The norm is submultiplicative, 
    $(\EGFalg{},\norm[])$ 
    is a Banach algebra. %, and 
    %$\LA$ is closed under products. 
    %(i.e. $\LA$ is closed under multiplication, complete, 
    %and the norm is submultiplicative). 
\end{proposition}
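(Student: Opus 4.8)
The plan is to first reinterpret membership in $\EGFalg{}$ as a condition on the coefficient sequence, and then verify the Banach-algebra axioms directly against the sup norm \eqref{eqn:defnnorm}. By Theorem~\ref{thm:allinterpolations}(1) (Mahler's theorem), a sequence $(c_n)_{n\geq 0}$ in $\QQ_p$ is the sequence of finite differences at zero of a continuous function $\ZZ_p \to \QQ_p$ if and only if $|c_n|_p \to 0$ as $n \to \infty$; indeed, given such a null sequence the Mahler series $x \mapsto \sum_{n} c_n \binom{x}{n}$ is a continuous function whose $n$th finite difference at zero is $c_n$. Hence
$$\EGFalg{} = \Big\{ \textstyle\sum_{n=0}^\infty c_n \tfrac{X^n}{n!} : c_n \in \QQ_p,\ |c_n|_p \to 0 \Big\},$$
and the assignment $\sum_n c_n \frac{X^n}{n!} \mapsto (c_n)_n$ is an isometric isomorphism of normed $\QQ_p$-vector spaces onto the space $c_0(\QQ_p)$ of null sequences equipped with the supremum norm. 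Since $\QQ_p$ is complete, $c_0(\QQ_p)$ is complete, so $(\EGFalg{},\norm[])$ is a $\QQ_p$-Banach space; this settles every axiom except those involving the multiplication.

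Next I would compute the multiplication on coefficients. For $F = \sum_n a_n \frac{X^n}{n!}$ and $G = \sum_m b_m \frac{X^m}{m!}$, multiplying the power series gives $FG = \sum_N c_N \frac{X^N}{N!}$ with the binomial convolution
$$c_N = \sum_{k=0}^N \binom{N}{k} a_k\, b_{N-k}.$$
Since each $\binom{N}{k}$ is a rational integer we have $|\binom{N}{k}|_p \leq 1$, so the ultrametric inequality yields $|c_N|_p \leq \max_{0 \leq k \leq N} |a_k|_p\,|b_{N-k}|_p \leq \norm[F]\,\norm[G]$, giving submultiplicativity $\norm[FG] \leq \norm[F]\,\norm[G]$ at once. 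The one point requiring an argument is that $FG$ again lies in $\EGFalg{}$, i.e. that $(c_N)_N$ is null: given $\varepsilon > 0$, choose $M$ with $|a_k|_p, |b_k|_p < \varepsilon$ for $k > M$; then for $N > 2M$ every index $k \in \{0,\ldots,N\}$ satisfies $k > M$ or $N-k > M$, so each summand has $p$-adic absolute value at most $\varepsilon \cdot \max(\norm[F],\norm[G])$, whence $|c_N|_p \to 0$ and $FG \in \EGFalg{}$.

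Finally I would record the remaining algebra structure: multiplication of power series is commutative and associative, the unit is $1 = \sum_n \delta_{0,n}\frac{X^n}{n!}$ with $\norm[1] = 1$, and $\EGFalg{}$ is closed under this multiplication by the previous paragraph. Together with the completeness and submultiplicativity already established, this shows $(\EGFalg{},\norm[])$ is a commutative unital $p$-adic Banach algebra. I do not expect a serious obstacle here: the ultrametric makes submultiplicativity automatic from the integrality of the binomial coefficients, and the only genuinely analytic input is the null-convolution estimate together with Mahler's theorem, which is precisely what translates the problem into the transparent model $c_0(\QQ_p)$.
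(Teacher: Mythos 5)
Your proof is correct and follows essentially the same route as the paper: Mahler's theorem identifies $\EGFalg{}$ with the null sequences under the sup norm, and submultiplicativity follows from the binomial convolution formula together with the ultrametric inequality and $|\binom{n}{k}|_p \leq 1$. The only cosmetic difference is that you verify closure under multiplication by a direct $\varepsilon$-argument showing the convolution of null sequences is null, whereas the paper obtains the product on $\EGFalg{}$ by extending the (norm-continuous) product from the dense subring $\QQ_p[X]$ to its completion; both are standard and equally valid.
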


\begin{proof}
%The norm defined by \eqref{eqn:defnnorm} makes 
%    $\EGFalg{\QQ_p}$ into a Banach $\QQ_p$-algebra. 
%This Banach $\QQ_p$-algebra is 
By Mahler's theorem, $\EGFalg{}$ is equal to 
the set of formal power series 
    $\sum_{n} c_n \frac{X^n}{n!}$ 
    satisfying $|c_n|_p \to 0$. 
It is now clear that $\EGFalg{}$ is 
    the completion of the polynomial ring $\QQ_p[X]$ 
    for the norm \eqref{eqn:defnnorm}, and 
    is therefore complete. 

We must verify that $\norm$ is submultiplicative.
Let $F(X) = \sum_{n=0}^\infty a_n \tfrac{X^n}{n!}$ and 
$G(X)  = \sum_{n=0}^\infty b_n \tfrac{X^n}{n!}$.
We have that 
\begin{equation}\label{eqn:productbinomialconvolution} % {{{
    F(X)G(X) = \sum_{n \geq 0}c_n \frac{X^n}{n!} 
    \quad\text{where}\quad
    c_n = \sum_{k=0}^n \binom{n}{k} a_{n-k} b_{k}.
\end{equation} % }}}
By the ultrametric triangle inequality, %for $|\cdot|_p$
%and the fact that 
%$|\binom{n}{k}|_p \leq 1$,
$$
\norm[F \cdot G] = 
\sup_{n \geq 0} \Big| \sum_{k = 0}^n 
        \binom{n}{k} a_{n-k}b_k \Big|_p
\leq 
\sup_{m \geq 0} |a_m|_p
\cdot
\sup_{n \geq 0} |b_n|_p
=
\norm[F] \norm [G].
$$

The product operation on $\CC_p[X]$ is continuous 
    for the norm topology, and therefore extends 
    uniquely to a product 
    on its completion $\EGFalg{}$. 
We have shown 
$(\EGFalg{\QQ_p}, \norm)$ is a Banach algebra. 
    %\note{show $\LA$ is closed under products...}
\end{proof}

%\begin{proof}
%    \textcolor{blue}{Use ultrametric inequality.}
%\end{proof}

\begin{definition}
Let $m_1,m_2,\ldots \in \QQ_p$. 
The 
\defn{zeta function associated to $m = (m_1,m_2,\ldots)$} 
is the formal power series defined by
\begin{equation}
\label{eq:zeta}
\zeta_m(X) \coloneqq 
%\prod_{r=1}^\infty \exp\left( \frac{m_rX^r}{r}\right). 
\exp\left( \sum_{k=1}^\infty \frac{m_k}{k} X^k\right). 
\end{equation}
\end{definition}

\begin{theorem}\label{thm:msequenceLA}
%\label{thm:msequence}
Let $m_1,m_2,\ldots \in \QQ_p$ and 
suppose $m_r \in \ZZ_p$ 
    for all $r \geq 2$. 
Then $\zeta_m$ is in $\EGFalg{}$
%the ring of e.g.f.'s whose coefficients approach zero in $\QQ_p$
if and only if 
$m_1 \equiv m_p \Mod p$. 
\end{theorem}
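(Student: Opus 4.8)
The plan is to realize $\zeta_m$ as an infinite product of exponential factors $E_k = \exp(\tfrac{m_k}{k}X^k)$ in the Banach algebra $\EGFalg{}$ and to isolate the two ``borderline'' factors $E_1$ and $E_p$. For a single factor the EGF-coefficients are $c_{kj} = m_k^j\,(kj)!/(k^j j!)$; writing $(kj)!/(k^jj!) = \tfrac{(kj)!}{(k!)^jj!}\cdot((k-1)!)^j$ as a product of two integers shows $v_p(c_{kj}) = j\,v_p(m_k) + v_p\!\big((kj)!/(k^jj!)\big) \ge j\,v_p((k-1)!)$. Hence for $k \ge p+1$ one gets $\|E_k - 1\| \le p^{-v_p((k-1)!)} \to 0$, while for $2 \le k \le p-1$ the slope $\tfrac{k-1}{p-1} - v_p(k)$ is positive, forcing $v_p(c_{kj})\to\infty$, so $E_k \in \EGFalg{}$ with $\|E_k\| \le 1$. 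Since consecutive partial products of a product with $\|E_k-1\|\to 0$ differ by a null sequence, the ultrametric inequality shows $G := \prod_{k \ge 2,\,k \ne p} E_k$ converges in $\EGFalg{}$; running the identical argument with $-m_k$ produces its formal inverse, so $G$ is a unit of $\EGFalg{}$. As formal power series $\zeta_m = H\cdot G$ with $H := \exp(m_1 X + \tfrac{m_p}{p}X^p)$, and therefore $\zeta_m \in \EGFalg{}$ if and only if $H \in \EGFalg{}$.

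Everything now reduces to the two-term exponential $H$, whose EGF-coefficients are $c_n = \sum_b \binom{n}{pb}\,u_b\,m_1^{\,n-pb}m_p^{\,b}$ with $u_b = (pb)!/(p^bb!) \in \ZZ_p^\times$. Differentiating gives $H' = (m_1 + m_p X^{p-1})H$, which I translate into the linear recurrence
$$c_{n+1} = m_1\,c_n + m_p\,(n)_{p-1}\,c_{n-p+1}, \qquad (n)_{p-1} := n(n-1)\cdots(n-p+2),$$
valid for $n \ge p-1$. Reducing modulo $p$, the falling factorial $(n)_{p-1}$ vanishes unless the block $\{n-p+2,\dots,n\}$ avoids the multiples of $p$, i.e. unless $n \equiv -1 \Mod{p}$, in which case it is $\equiv (p-1)! \equiv -1$ by Wilson. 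Tracing one block thus gives $c_{kp+j} \equiv m_1^{\,j}c_{kp}$ for $0 \le j < p$ and then $c_{(k+1)p} \equiv (m_1^{\,p} - m_p)c_{kp}$; by Fermat $m_1^{\,p} \equiv m_1 \Mod{p}$, so
$$c_n \equiv m_1^{\,n_0}(m_1 - m_p)^{\,n'} \Mod{p}, \qquad n = pn' + n_0,\ 0 \le n_0 < p.$$

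The only-if direction is immediate from this formula: if $m_1 \not\equiv m_p \Mod{p}$ with $m_1$ a unit then every $c_n$ is a unit, and if $m_1 \equiv 0$ (so $m_p \not\equiv 0$) then $c_{pn'} \equiv (-m_p)^{n'}$ is a unit; either way $c_n \not\to 0$. (The case $m_1 \notin \ZZ_p$ is separate: the $b=0$ term $m_1^n$ strictly dominates in valuation, so $v_p(c_n) = n\,v_p(m_1) \to -\infty$.) For the if direction I assume $m_1 \equiv m_p \Mod{p}$, which forces $m_1 \in \ZZ_p$, and prove $v_p(c_n) \to \infty$ by induction on $e$, showing $c_n \equiv 0 \Mod{p^e}$ for all large $n$. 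The base case $e=1$ is the displayed congruence, which gives $c_n \equiv 0$ once $n' \ge 1$. For the inductive step I observe that if $c_n \equiv 0 \Mod{p^e}$ for $n \ge N_e$, then $\tilde c_n := c_n/p^e$ satisfies the \emph{same} recurrence for $n \ge N_e + p$; the block computation above forces $\tilde c_n \equiv 0 \Mod{p}$ after at most two further blocks, i.e. $c_n \equiv 0 \Mod{p^{e+1}}$ for $n \ge N_{e+1}$.

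The main obstacle is this last step — upgrading mod-$p$ vanishing to genuine $p$-adic convergence $v_p(c_n) \to \infty$. What makes the induction close is that the recurrence is \emph{scale-invariant}: dividing through by $p^e$ reproduces it verbatim, so the single mechanism ``one full cycle through a residue block contracts the sequence by the Fermat factor $m_1^{\,p} - m_p \equiv m_1 - m_p \equiv 0$'' can be applied at every level. The only delicate bookkeeping is checking that the recurrence and the block argument are valid from the stated indices onward uniformly in $e$; this also goes through unchanged when $p = 2$, where each block has length one.
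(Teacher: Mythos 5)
Your proposal is correct, and it reaches the conclusion by a genuinely different route in the decisive step. The first half — peeling off $\prod_{k\neq 1,p}\exp(m_kX^k/k)$ as a unit of $\EGFalg{}$ via the bound $\norm[\exp(m_kX^k/k)-1]\leq |(k-1)!|_p$ and an ultrametric Cauchy-sequence argument — is essentially the paper's Lemma~\ref{lemma:two}. Where you diverge is in handling the residual factor: the paper multiplies $\zeta_m$ by $(1-X)^{m_p}=\prod_{r\geq 1}\exp(-m_pX^r/r)$, which lies in $\EGFalg{}$ because the falling factorials $m_p^{\underline{n}}$ tend to zero, thereby cancelling the degree-$p$ exponential and reducing everything to the single factor $\exp((m_1-m_p)X)$, which Lemma~\ref{lemma:one} (Legendre's formula) disposes of in one line. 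You instead keep the two-term exponential $H=\exp(m_1X+m_pX^p/p)$ and analyze its EGF-coefficients directly through the recurrence $c_{n+1}=m_1c_n+m_p(n)_{p-1}c_{n-p+1}$ coming from $H'=(m_1+m_pX^{p-1})H$, using Wilson and Fermat to get the closed mod-$p$ formula $c_n\equiv m_1^{n_0}(m_1-m_p)^{n'}$ and then a self-improving induction on $e$ (rescaling by $p^{-e}$ and rerunning the block computation) to upgrade mod-$p$ vanishing to $v_p(c_n)\to\infty$. Both arguments are sound; the paper's trick is shorter and gets the quantitative threshold for free from Lemma~\ref{lemma:one} (which also absorbs the case $m_1\notin\ZZ_p$ that you must treat by a separate valuation-dominance argument), while your recurrence gives more explicit information — an exact mod-$p$ formula for the Mahler coefficients of $H$ — at the cost of the extra bookkeeping in the induction, whose key justification (homogeneity of the recurrence under division by $p^e$, with the starting index shifted by one block) you correctly identify.
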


To prove the theorem we establish two lemmas. 
For the proofs, we will make repeated use of 
a well-known formula of Legendre for 
the $p$-adic valuation of $n!$, 
$$v_p(n!) %= \sum_{k \geq 1} \floor{n/p^k}  
= \frac{n - s_p(n)}{p-1} $$
where $s_p(n)$ is the sum of the digits of $n$ in base $p$. 
%$\frac n{p-1}  - \frac{\log n}{\log p}\leq v_p(n!) \leq \frac n{p-1}$.

%\begin{lemma}
%%\label{lemma:expm}
%For an integer $m$, the power series 
%$e^{mX}$ is in $\LA$
%if and only if $m$ is divisible by $p$.
%\end{lemma}
%\begin{proof}
%We have
%$e^{mX} = \sum_{n\geq 0} m^n \frac{X^n}{n!}$ and 
%%If $p$ divides $m$, then %$|m|_p <1$ and 
%%the coefficients of the above series satisfy
%    $\ds\liminf_{n \to \infty}v_p(m^n)/n = v_p(m)$. 
%%On the other hand if $p$ does not divide $m$,
%%then $|m|_p = 1$, so $|m^n|_p = 1$ for all $n\geq 1$. 
%\end{proof}

%\begin{lemma}[Legendre's formula]
%\end{lemma}

\begin{lemma}\label{lemma:one}
For $m\in \CC_p$ and $r \in \NNpos$ the power series 
    $\exp({mX^r/r})$ is in $\EGFalg{\QQ_p}$ %(resp. $\LA$) 
    if and only if %$m$ is divisible by $p$.
    $v_p(m) > v_p(r) - \frac{r-1}{p-1}$. 
    %Furthermore, $\norm[e^{mX^r/r}]$ 
\end{lemma}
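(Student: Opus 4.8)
The plan is to reduce membership in $\EGFalg{\QQ_p}$ to a decay condition on explicit coefficients and then read off the answer from Legendre's valuation formula. First I would expand the exponential as a power series in $X$: since $\exp(mX^r/r) = \sum_{j \geq 0} \frac{m^j}{j!\,r^j} X^{rj}$, rewriting this in the form $\sum_n c_n X^n/n!$ shows that $c_n = 0$ unless $r \mid n$, while
$$c_{rj} = \frac{(rj)!}{j!\,r^j}\,m^j.$$
By the Mahler characterization of $\EGFalg{}$ recalled in the proof of Proposition~\ref{prop:closedundermult}, the series lies in $\EGFalg{\QQ_p}$ if and only if $|c_n|_p \to 0$, i.e. if and only if $v_p(c_{rj}) \to \infty$ as $j \to \infty$.

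Next I would compute $v_p(c_{rj})$ using Legendre's formula $v_p(n!) = (n - s_p(n))/(p-1)$. A short calculation gives
$$v_p(c_{rj}) = j\left(v_p(m) - v_p(r) + \frac{r-1}{p-1}\right) + \frac{s_p(j) - s_p(rj)}{p-1}.$$
Set $A := v_p(m) - v_p(r) + \frac{r-1}{p-1}$; the hypothesis of the lemma is exactly the assertion $A > 0$. Since $s_p(n) = O(\log n)$, the digit-sum correction term is $o(j)$, so the sign of $A$ controls the limit: if $A > 0$ then $v_p(c_{rj}) \to +\infty$ and the series lies in $\EGFalg{\QQ_p}$, whereas if $A < 0$ then $v_p(c_{rj}) \to -\infty$ and it does not.

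The delicate point, and the step I expect to be the main obstacle, is the boundary case $A = 0$, where the linear term vanishes and I must show that $v_p(c_{rj})$ nevertheless fails to tend to infinity, so that the series is excluded. Here I would pass to the subsequence $j = p^N$: then $s_p(p^N) = 1$, and because multiplying by $p^N$ merely appends $N$ zeros to the base-$p$ expansion of $r$ we have $s_p(r\,p^N) = s_p(r)$, so $v_p(c_{r p^N}) = (1 - s_p(r))/(p-1)$ is constant in $N$. Thus $|c_{r p^N}|_p$ stays bounded away from $0$, the EGF-coefficients do not tend to $0$, and $\exp(mX^r/r) \notin \EGFalg{\QQ_p}$. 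Combining the three cases yields the stated equivalence.
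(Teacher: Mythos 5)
Your proposal is correct and follows essentially the same route as the paper: both extract the explicit EGF-coefficients $c_{rj}=m^j(rj)!/(r^jj!)$, apply the Mahler criterion, evaluate $v_p(c_{rj})$ via Legendre's formula, and dispose of the boundary case $v_p(m)-v_p(r)+\tfrac{r-1}{p-1}=0$ by passing to the subsequence $j=p^N$, where the digit sums make $v_p(c_{rp^N})$ constant. Your write-up is, if anything, slightly more explicit than the paper's in isolating the term $\frac{s_p(j)-s_p(rj)}{p-1}$.
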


\begin{proof}
For $n\geq 0$ 
    the $n$th EGF-coefficient $c_{n}$ of 
    $\exp({mX^r/r}) =\sum_{n\geq 0}c_nX^n/n!$ is 
    given by
    $$
    c_n = \begin{cases}
    m^k \frac{(rk)!}{r^k k!} &\text{if } n = r k \text{ for $k \in \NN$,} \\
    0 &\text{otherwise.}
    \end{cases}
    $$
%    $m^n \frac{(rn)!}{r^n n!}$, while 
%    all other EGF-coefficients vanish. 
Then 
%\begin{equation} % {{{
%    v_p(c_{rk}) =
%    v_p\left(m^k \frac{(rk)!}{r^k k!}\right)
%    = v_p(m^k) - v_p(r^k) + \frac{rk - s_p(r k)}{p-1}- \frac{k - s_p(k)}{p-1}. 
%\end{equation} % }}}
\begin{equation} % {{{
\liminf_{n\to\infty}
    \frac{1}{n}v_p(c_n) =
    \liminf_{k \to \infty}
    \frac{1}{rk}v_p\left(m^k \frac{(rk)!}{r^k k!}\right)
    =\frac{1}{r}\left(v_p(m)-v_p(r) + \frac{r-1}{p-1}\right). 
\end{equation} % }}}
If $v_p(m) - v_p(r) + \frac{r-1}{p-1}>0$,
then $v_p(c_n) \to \infty$ as $n\to \infty$ so 
$\exp({mX^r/r})$ is in $\EGFalg{}$ by Theorem~\ref{thm:allinterpolations}.
%Then 
%    $\exp({mX^r/r})$ is in $\LA$ 
%    if and only if %$m$ is divisible by $p$.
%    $v_p(m) - v_p(r) + \frac{r-1}{p-1}>0$ 
%    by Amice's theorem 
%    (Theorem~\ref{thm:allinterpolations}). 

To finish we show 
    $\exp({mX^r/r})$ is not in $\EGFalg{}$
    if $v_p(m)- v_p(r) + \frac{r-1}{p-1} \leq 0$.
If $v_p(m) - v_p(r) + \frac{r-1}{p-1}<0$, 
    then $v_p(c_n) \not\to \infty$ 
    so $\exp({mX^r/r})$ is not in $\EGFalg{}$. 
If $v_p(m) - v_p(r) + \frac{r-1}{p-1}=0$ 
    then $v_p(c_n)=(p-1)^{-1}(s_p(rn)-s_p(n))$ 
    where $s_p(n)$ is the sum of 
    the base-$p$ digits of $n$. 
This shows $v_p(c_n)< \infty$ is constant 
    for $n\in \{1,p, p^2, \ldots\}$ so 
    $\exp({mX^r/r})$ is not in $\EGFalg{}$. 
    %as $v_p(c_n)$ does not go to infinity. 
\end{proof}

\begin{lemma}\label{lemma:two}
    Let $m_1,m_2,\ldots \in \ZZ_p$. 
    If $m_1 = m_p = 0$ 
    then the zeta function $\zeta_m$ 
    (cf.~\eqref{eq:zeta}) is in $\EGFalg{}$. 
    %an invertible element of $\EGFalg{}$. 
\end{lemma}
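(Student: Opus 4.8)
The plan is to factor the zeta function as an infinite product of single-exponential factors and apply Lemma~\ref{lemma:one} to each factor. Writing
\[
\zeta_m(X) = \exp\left(\sum_{k=1}^\infty \frac{m_k}{k}X^k\right) = \prod_{k=1}^\infty \exp\left(\frac{m_k}{k}X^k\right),
\]
I would show that each factor $\exp(m_k X^k/k)$ lies in $\EGFalg{}$ and that the partial products converge in the Banach algebra norm $\norm$. Since $\EGFalg{}$ is complete (Proposition~\ref{prop:closedundermult}) and closed under multiplication, the limit will lie in $\EGFalg{}$. First I would dispose of the factors with $k \geq 2$ and $k \neq p$: for these, $m_k \in \ZZ_p$ means $v_p(m_k) \geq 0$, and I must check the Lemma~\ref{lemma:one} criterion $v_p(m_k) > v_p(k) - \frac{k-1}{p-1}$. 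Since $v_p(k) \leq \log_p k$ grows slowly while $\frac{k-1}{p-1}$ grows linearly, the right-hand side is negative for all $k \geq 2$ except possibly small cases, and in fact $v_p(k) - \frac{k-1}{p-1} < 0$ whenever $k \geq 2$ and $k \neq p$ — this is the key numerical inequality to verify. The two special indices $k=1$ and $k=p$ are exactly where the criterion could fail: for $k=1$ it reads $v_p(m_1) > 0$, and for $k=p$ it reads $v_p(m_p) > 1 - 1 = 0$. The hypothesis $m_1 = m_p = 0$ sidesteps both failures, since then the factors $\exp(m_1 X)$ and $\exp(m_p X^p/p)$ are simply $1$ and drop out of the product entirely.

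Once each individual factor is in $\EGFalg{}$, the remaining work is to establish convergence of the infinite product. I would argue that $\norm[\exp(m_k X^k/k) - 1] \to 0$ as $k \to \infty$, which suffices for a product of the form $\prod(1 + \epsilon_k)$ to converge in a Banach algebra. Concretely, the lowest-degree term of $\exp(m_k X^k/k) - 1$ is $\frac{m_k}{k}X^k$, with EGF-coefficient $\frac{m_k}{k}k! = m_k (k-1)!$; more generally the nonzero EGF-coefficients are the $c_{jk} = (m_k/k)^j \frac{(jk)!}{j!}$ from Lemma~\ref{lemma:one}, and I would bound their $p$-adic norms using Legendre's valuation formula to show that $\norm[\exp(m_k X^k/k) - 1] = \sup_j |c_{jk}|_p \to 0$. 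The dominant contribution comes from $j=1$, giving a norm comparable to $|(k-1)!|_p = p^{-v_p((k-1)!)}$, which tends to $0$ since $v_p((k-1)!) \to \infty$.

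The main obstacle I anticipate is not the convergence bookkeeping but the careful verification that the partial products actually converge to $\zeta_m$ and that passing to the limit is legitimate — i.e.\ that the Banach-algebra product of the factors genuinely equals the power series $\exp(\sum m_k X^k/k)$ coefficient-by-coefficient. This requires checking that multiplication in $\EGFalg{}$ (which is the binomial convolution of EGF-coefficients, per \eqref{eqn:productbinomialconvolution}) is compatible with the naive multiplication of the corresponding ordinary exponential generating functions, so that the formal identity $\exp(\sum_k a_k) = \prod_k \exp(a_k)$ transfers from $\QQ_p[\![X]\!]$ to the completed algebra. I would handle this by noting that $\QQ_p[X]$ is dense in $\EGFalg{}$, that the product on $\EGFalg{}$ restricts to the usual polynomial product, and that both the formal and the normed products agree on the dense subalgebra, so they agree on the limit by continuity of multiplication. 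A secondary subtlety is ensuring the reordering of the (commuting) factors is harmless, which follows from absolute convergence in the ultrametric setting once $\norm[\exp(m_k X^k/k) - 1] \to 0$ is established.
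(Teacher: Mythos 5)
Your proposal is correct and follows essentially the same route as the paper: factor $\zeta_m$ into the factors $\exp(m_kX^k/k)$, verify the numerical inequality $v_p(k)-\tfrac{k-1}{p-1}<0$ for $k\in\NNpos\setminus\{1,p\}$ so that Lemma~\ref{lemma:one} applies to each factor, and then show the partial products are Cauchy via the bound $\norm[\exp(m_kX^k/k)-1]\leq|(k-1)!|_p\to 0$. The only difference is cosmetic: the paper carries out the short case analysis on $v_p(k)$ that you flag as ``to verify,'' and it treats the identification of the Banach-algebra limit with $\zeta_m$ more tersely than your density argument.
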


\begin{proof} % {{{
We claim that $v_p(r) - \frac{r-1}{p-1} < 0$ 
    if $r \in \NNpos \backslash \{1,p\}$.  
The claim is clear if $v_p(r) = 0$. 
If $v_p(r) = 1$ then $r > p$, so 
    $v_p(r) -  \frac{r-1}{p-1} < 1 - \frac{p-1}{p-1} = 0$. 
If $v_p(r) = v \geq 2$ then $r \geq p^v$, and 
    $v -  \frac{r-1}{p-1}
    \leq v - \frac{p^v-1}{p-1}
    %= v - \left( 1 + p + \cdots + p^{v-1} \right)
    \leq v-1-2^{v-1}$. 
The quantity $v-1-2^{v-1}$ is always negative, 
    which establishes the claim. 

By the claim, 
    $v_p(m_r) \geq 0 > v_p(r) - \frac{r-1}{p-1}$ 
    whenever $r \in \NNpos \backslash \{1,p\}$, 
    and by Lemma~\ref{lemma:one} 
    we conclude $\exp({m_rX^r/r})$ is in $\EGFalg{}$ 
    for any $r\in \NNpos$. 
    Let $G_N = \prod_{r=1}^N \exp({m_r{X^{r}/r}})$ denote 
    the partial product, 
    which is contained in $\EGFalg{}$ 
    by Proposition~\ref{prop:closedundermult}. 
%Each factor $\exp(m_r {X^{r}/r} )$ is in $\LA$ 
%    by Lemma~\ref{lemma:notoneorp}, 
% $G_N$ is also for every $N\geq 1$. 
To prove the lemma 
    it suffices to show that 
    $(G_N)_{N\geq 1}$ is a Cauchy sequence 
    by completeness of $\EGFalg{}$. 

For any positive integer $r$ we have that 
\begin{align*}
    \norm[ \exp\left({\frac{m_r X^r}{r}}\right) ]
    &= \sup_{n \geq 0}
        \left|m_r^n\frac{(rn)!}{r^n n!} \right|_p 
    \leq \sup_{n \geq 0}
        \bigg| \prod_{\substack{1 \leq k \leq rn\\k \not \in r\ZZ}} k \bigg|_p 
         = 1. 
\end{align*}
By submultiplicativity 
this implies that $\norm[G_N] \leq 1$.
Moreover, 
\begin{equation}
	\norm[\exp\left({\frac{m_r X^r}{r}}\right) - 1] 
    = \sup_{n \geq 1}
        \left|m_r^n\frac{(rn)!}{r^n n!} \right|_p 
    \leq \sup_{n\geq 1} \bigg| 
    \prod_{\substack{1 \leq k \leq rn\\k \not \in r\ZZ}} 
        k \bigg|_p 
	= \big| (r-1)! \big|_p.  
    %= \exp\left( -(\log p) \sum_{k\geq 1} \floor{\frac{r-1}{p^k}} \right) \\
    %&\leq \exp\left( -(\log p) \left(\frac{r-1}{p}  - 1 \right)\right)
	%= \frac{p^{1+1/p}}{p^{ r/p}} .
\end{equation}
Putting these together, 
    $$\norm[G_{N} - G_{N-1}] \leq 
    \norm[G_{N-1}]
    \norm[\exp\left({\frac{m_{N}X^{N}}{N}}\right) - 1]
    \leq |(N-1)!|_p.$$
By the ultrametric triangle inequality,
whenever $N\leq M$ we have 
$$ \norm[G_M - G_N] \leq 
    \max_{N+1\leq i \leq M} 
    \norm[G_{i} - G_{i-1}] 
\leq 
    \max_{N+1\leq i \leq M} 
    |(i-1)!|_p \leq |N!|_p. $$
%\leq p^{1 + 1/p} \left(\frac1{p^{1/p}} \right)^{r_{N+1}}.$$
The upper bound goes to zero as $N\to\infty$.
This verifies that $(G_N)_{N\geq 1}$ is a Cauchy sequence 
and therefore converges to $\zeta_m \in \EGFalg{}$. 
%which proves the lemma.
%
%Replacing the $m$'s with their negatives 
%shows $\zeta_m^{-1}$ is also in $\EGFalg{}$. 
\end{proof} % }}}

%Now we prove the theorems. 
\begin{proof}[Proof of Theorem~\ref{thm:msequenceLA}] %{*
%Write 
%$G = \prod_{r=1}^\infty \exp\left( \frac{m_rX^r}{r}\right)$, 
%    considered as a formal power series in $\CC_p[\![X]\!]$. 
We can factor $\zeta_m$ as   
$$
\zeta_m = 
    \exp(m_1X) \exp\left(\frac{m_pX^p}{p}\right) 
    \prod_{r \neq 1,p} \exp\left( \frac{m_rX^r}{r} \right).
$$
%Since $\LA$ is closed under multiplication,
%$\LA$ contains $G$ 
%if and only if it contains
%$$
%G\cdot \prod_{r \neq 1,p} 
%    \exp\left( \frac{X^r}{r} \right)^{-m_r} = 
%    \exp(X)^{m_1} \exp\left(\frac{X^p}{p}\right)^{m_p} .
%$$
%$$
%G(X)\prod_{r \neq 1,p} \exp\left( \frac{X^r}{r} \right)^{m_p-m_r} = 
%    \exp(X)^{m_1} \exp\left(\frac{X^p}{p}\right)^{m_p} \prod_{r \neq 1,p} \exp\left( \frac{X^r}{r} \right)^{m_p}.
%$$

Let $m^{\underline{n}} = m(m-1)(m-2) \cdots (m-n+1)$ denote the $n$th falling factorial of $m$.
Note that $|m^{\underline{n}}|_p \to 0$ as $n\to\infty$ for any $m \in \ZZ_p$;
hence
$$(1-X)^m = \sum_{n \geq 0}\binom{m}{n}(-1)^nX^n
= \sum_{n\geq 0} (-1)^n m^{\underline{n}}\, \frac{X^n}{n!} $$
is in $\EGFalg{}$ for any $m\in\ZZ_p$. 
%and this series factors as 
We also have the power series identity 
$\prod_{r\geq 1}\exp\left(- \frac{X^r}{r}\right)  
    %\exp({-\log(1-X)}) 
    = {1-X}.$ 
%showing that 
%$
%\frac1{(1-X)^{m}} %= \sum_{n\geq 1} n! \frac{X^n}{n!} 
%\prod_{r\geq 1} \exp\left( \frac{X^r}{r} \right)^{m}\in \LA$ 
%for any integer $m$. 

We see that $\EGFalg{}$ contains $\zeta_m$ 
    if and only if $\EGFalg{}$ contains 
\begin{align}
    \zeta_m \cdot (1-X)^{m_p} %\cdot \prod_{r \neq 1,p} 
    %\exp\left( \frac{X^r}{r} \right)^{-m_r} %= 
    &=
    \exp(m_1X) \exp\left(\frac{m_pX^p}{p}\right)
    \prod_{r\neq 1,p} \exp\left( \frac{m_rX^r}{r} \right)
    \prod_{r\geq 1} \exp\left( \frac{-m_pX^r}{r} \right)\\
    &=
    \exp((m_1-m_p)X)
    \prod_{r \neq 1,p} \exp\left( \frac{(m_r-m_p)X^r}{r} \right).
    %\exp(X)^{m_1} \exp\left(\frac{X^p}{p}\right)^{m_p} .
\end{align}
By Lemma~\ref{lemma:two}, 
    $\prod_{r \neq 1,p} 
    \exp\left( \frac{(m_r-m_p)X^r}{r} \right)$ 
    is an invertible element of $\EGFalg{}$, 
    so $\EGFalg{}$ contains $\zeta_m$ if and only if 
    it contains $\exp((m_1-m_p)X)$. 
By Lemma~\ref{lemma:one},
$\exp((m_1-m_p)X)$ is in $\EGFalg{}$ if and only if
$|m_1 -m_p|_p <1$.
\end{proof} %*}

\begin{proof}[Proof of Theorem~\ref{thm:mkpinterpolation}]
It is a well-known symmetric function identity that  
\begin{equation} % {{{
\sum_{n=0}^\infty
    h_n X^n
     = 
    \exp\left(
    \sum_{k=1}^\infty
    \frac{p_k}{k}
    X^k
     \right)
\end{equation} % }}}
where 
    $h_n = \sum_{1\leq i_1 \leq \cdots \leq i_n} 
    x_{i_1} \cdots x_{i_n}$ 
    and 
    $p_k = \sum_{i\geq 1}x_i^k $. 
    The set $\{h_n\}_n$ is 
    algebraically independent in $\QQ[\{x_i\}_i]$, 
    and by specializing $h_n \mapsto f(n)/n!$ 
    we obtain 
\begin{equation} % {{{
\sum_{n=0}^\infty
    \frac{f(n)}{n!} X^n
     = 
    \exp\left(
    \sum_{k=1}^\infty
    \frac{m_k}{k}
    X^k
     \right)
    %1+f(1)X + f(2) \frac{X^2}{2!} + \cdots 
    = \zeta_m. 
\end{equation} % }}}
Let $c_n =
\sum_{k=0}^n
    \binom{n}{k}
    (-1)^{n-k}
    f(k)$ be 
    the $n$th finite difference of $f$. 
Then by \eqref{eqn:productbinomialconvolution} 
\begin{equation} % {{{
    1 + c_1X + c_2 \frac{X^2}{2!} + \cdots 
    =
    e^{-X} \zeta_m, 
\end{equation} % }}}
so $f$ is $p$-adically continuous 
if and only if 
    $e^{-X} \zeta_m$ is in $\EGFalg{}$. 
Applying Theorem~\ref{thm:msequenceLA} 
    to $m_1-1,m_2,m_3,\ldots$ shows that 
    $e^{-X} \zeta_m$ is in $\EGFalg{}$ 
    if and only if 
\[
    {f(1)-1 = m_1-1 \equiv m_p \Mod p.}
\tag*{\qedhere} 
\]
\end{proof}

\subsection{Cycle-restricted permutations}

A derangement is defined as a permutation with no cycle of length one.
In this section we consider a generalization 
by placing an arbitrary restriction on the cycle lengths of a permutation.
Let $S_n$ denote the symmetric group on $n$ letters. 

Given a set $L$ of positive integers, let
\begin{equation}
\crp{L}{n} = \#\{ \sigma \in S_n : \text{cycle lengths of $\sigma$ are in }L\}.
\end{equation}
We show that the functions $n \mapsto \crp{L}{n}$
for different choices of $L$ 
are precisely the functions %$f \colon \NN \to \QQ$ 
    whose $m$-values (cf.~\eqref{eqn:mvalues}) 
    are all in $\{0,1\}$. 

%In this section we restrict our attention 
%    to functions $f \colon \NN \to \QQ$ 
%    whose $m$-values (cf.~\eqref{eqn:mvalues}) 
%    are all in $\{0,1\}$. 
%We show that any such function admits 
%a combinatorial interpretation 
%in terms of symmetric groups. 

\begin{theorem}[Theorem~\ref{thm:combinterpretationmkzeroone}]\label{thm:combinterpretationmkzeroone2}
Let $f \colon \NN \to \QQ$ be a function 
    whose $m$-values (cf.~\eqref{eqn:mvalues}) 
    are all in $\{0,1\}$. 
    Then $f(n)$ is equal to 
    the number $\crp{L}{n}$ 
    of permutations in $S_n$ 
    whose cycle lengths are in 
    $L=\{r : m_r = 1\}$. 
\end{theorem}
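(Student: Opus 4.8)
The plan is to compare exponential generating functions. Throughout I normalize $f(0) = 1$; this is forced by the desired conclusion, since $\crp{L}{0} = 1$, and it affects none of the $m$-values, which depend only on $f(1), f(2), \ldots$. Write $L = \{r : m_r = 1\}$ as in the statement. Because each $m_r$ lies in $\{0,1\}$, we have $m_r = \charfn{L}(r)$ for every $r$, so the sequence of $m$-values is exactly the indicator of $L$.

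First I would record the generating function of $f$. By the symmetric-function identity $\sum_{n} h_n X^n = \exp\!\big(\sum_{k} \tfrac{p_k}{k}X^k\big)$ specialized via $h_n \mapsto f(n)/n!$ (exactly as in the proof of Theorem~\ref{thm:mkpinterpolation}), together with $f(0) = 1$, we obtain
$$\sum_{n=0}^\infty f(n)\frac{X^n}{n!} = \exp\left(\sum_{k=1}^\infty m_k \frac{X^k}{k}\right).$$
Since $m_k \in \{0,1\}$ with $m_k = 1$ precisely when $k \in L$, the exponent collapses to $\sum_{r \in L} X^r/r$, giving
$$\sum_{n=0}^\infty f(n)\frac{X^n}{n!} = \exp\left(\sum_{r \in L} \frac{X^r}{r}\right).$$

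Next I would identify the right-hand side as the generating function counting cycle-restricted permutations. This is the exponential formula: a permutation is an unordered disjoint union of cycles, a single cycle of length $r$ on $r$ labelled points admits $(r-1)!$ cyclic orders and so has exponential generating function $(r-1)!\,X^r/r! = X^r/r$, and restricting to cycle lengths in $L$ yields the ``connected'' generating function $\sum_{r \in L} X^r/r$. The exponential formula then gives
$$\sum_{n=0}^\infty \crp{L}{n}\frac{X^n}{n!} = \exp\left(\sum_{r \in L}\frac{X^r}{r}\right).$$
Comparing the two displays and extracting the coefficient of $X^n/n!$ yields $f(n) = \crp{L}{n}$ for all $n \geq 0$, which is the theorem.

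I expect the main obstacle to be justifying this last exponential formula directly rather than by black-box appeal, since it is the only genuinely combinatorial input. A self-contained route bypasses generating functions on the combinatorial side: conditioning on the cycle through a fixed point shows that $\crp{L}{n}$ satisfies $\crp{L}{0} = 1$ and the recursion $\crp{L}{n} = \sum_{r \in L,\, r \le n} \frac{(n-1)!}{(n-r)!}\crp{L}{n-r}$ (choose the $r-1$ other points of that cycle, order them cyclically in $(r-1)!$ ways, and permute the remaining $n-r$ points). On the other side, unwinding the stated recursion $m_k = \frac{f(k)}{(k-1)!} - \sum_{j=1}^{k-1}\frac{f(k-j)}{(k-j)!}m_j$ with $m_r = \charfn{L}(r)$ shows that $f$ obeys the identical recursion and initial condition, whence $f(n) = \crp{L}{n}$ follows by induction on $n$ with no reference to the exponential formula. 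Either way, the content beyond the earlier results of the paper is purely the combinatorial interpretation of the cycle generating function, and the $p$-adic machinery plays no role in this statement.
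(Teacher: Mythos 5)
Your proof is correct and follows essentially the same route as the paper: both sides are identified with the exponential generating function $\exp\bigl(\sum_{r\in L}X^r/r\bigr)$, using the specialization $h_n\mapsto f(n)/n!$ of the symmetric-function identity on one side and the exponential formula (which the paper phrases via combinatorial species in Proposition~\ref{prop:EGFcrp}) on the other. Your remark that $f(0)=1$ must be imposed, and your alternative recursion-based verification, are sound but do not change the substance of the argument.
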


%Let $L:= \{r : m_r = 1\}$ denote 
%    the set of `permissible' cycle lengths. 
Combining the above theorem 
with Theorem~\ref{thm:mkpinterpolation} 
results in the following 
$p$-adic interpolation result 
for counts of cycle-restricted permutations. 
(Note that $n\mapsto (-1)^n$ is $2$-adically continuous,
so part (c) holds equally for $n\mapsto (-1)^n \crp{L}{n}$.)

\begin{theorem}[Theorem~\ref{thm:intro-p-derangement}]\label{thm:p-derangement}
Let $L$ be a set of positive integers,
and let $p\geq 3$ be prime. 
Let $\crp{L}{n}$ denote
    the number of permutations in $S_n$ 
    whose cycle lengths are in $L$.
\begin{enumerate}[(a)]
\item 
$n \mapsto \crp{L}{n}$ is $p$-adically continuous 
    if and only if $1\in L$ and $p \not\in L$.
\item 
$n \mapsto (-1)^n \crp{L}{n}$ is $p$-adically continuous if and only if
$1 \not\in L$ and  $p  \in L$.

\item 
$n \mapsto \crp{L}{n}$ is $2$-adically continuous if and only if 
one of the following hold:
\begin{itemize}
\item 
$1 \in L$ and $2 \not \in L$, or

\item 
$1\not\in L$ and $2 \in L$.
\end{itemize}
\end{enumerate}
\end{theorem}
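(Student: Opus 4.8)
The plan is to deduce all three parts from a single congruence obtained by feeding $\crp{L}{n}$ and its scalar twists into Theorem~\ref{thm:mkpinterpolation}. First I would identify the $m$-values of $n \mapsto \crp{L}{n}$. By the exponential formula for cycle-restricted permutations, $\sum_{n \geq 0} \crp{L}{n} \tfrac{X^n}{n!} = \exp\!\big(\sum_{r \in L} \tfrac{X^r}{r}\big)$ (equivalently, by Theorem~\ref{thm:combinterpretationmkzeroone2} applied to the function with prescribed $m$-values $\charfn{L}$), the $m$-values of $\crp{L}{n}$ are exactly $m_r = \charfn{L}(r)$. Since $\crp{L}{0} = 1$ and each $m_r \in \{0,1\} \subset \ZZ_p$, the hypotheses of Theorem~\ref{thm:mkpinterpolation}, namely $f(0) = 1$ together with $p$-integrality of $m_k$ for $k \geq 2$, hold automatically.

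Next I would treat the sign twist uniformly. For an integer $\alpha$ set $g(n) = \alpha^n \crp{L}{n}$; substituting $\alpha X$ for $X$ in the generating function shows the $m$-values of $g$ are $\alpha^k \charfn{L}(k)$, which are again $p$-integral, and $g(0) = 1$. Applying Theorem~\ref{thm:mkpinterpolation} to $g$, continuity of $g$ is equivalent to $\alpha^p \charfn{L}(p) \equiv \alpha\,\charfn{L}(1) - 1 \Mod{p}$. Using Fermat's little theorem in the form $\alpha^p \equiv \alpha \Mod{p}$ collapses this to the clean criterion
\[
    \alpha\big(\charfn{L}(1) - \charfn{L}(p)\big) \equiv 1 \Mod{p}.
\]

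The final step is a bounded case analysis, using $\charfn{L}(1), \charfn{L}(p) \in \{0,1\}$ so that $\charfn{L}(1) - \charfn{L}(p) \in \{-1,0,1\}$. For part (a) take $\alpha = 1$: when $p \geq 3$ the only element of $\{-1,0,1\}$ congruent to $1$ modulo $p$ is $1$ itself, forcing $\charfn{L}(1) = 1$ and $\charfn{L}(p) = 0$, i.e.\ $1 \in L$ and $p \notin L$. For part (b) take $\alpha = -1$, so that the criterion reads $\charfn{L}(p) - \charfn{L}(1) \equiv 1 \Mod{p}$, which for $p \geq 3$ forces $p \in L$ and $1 \notin L$. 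For part (c) take $p = 2$ with $\alpha = 1$: modulo $2$ the criterion becomes $\charfn{L}(1) + \charfn{L}(2) \equiv 1 \Mod{2}$, i.e.\ exactly one of $1,2$ lies in $L$, giving the two listed alternatives.

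The only genuine subtlety is the prime $2$, where $-1 \equiv 1 \Mod{2}$ makes the $\alpha = 1$ and $\alpha = -1$ criteria coincide; this is exactly why part (c) does not distinguish $\crp{L}{n}$ from $(-1)^n \crp{L}{n}$, and one records separately that $n \mapsto (-1)^n$ is locally constant modulo $2$, hence extends continuously to $\ZZ_2$ and may be multiplied in freely. Beyond this, no step poses a real obstacle once the $m$-values of $\crp{L}{n}$ are in hand: the conceptual content has already been front-loaded into Theorems~\ref{thm:mkpinterpolation} and~\ref{thm:combinterpretationmkzeroone2}, and what remains is the Fermat reduction and a finite check.
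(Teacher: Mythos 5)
Your proposal is correct and follows essentially the same route as the paper: both extract the $m$-values $\alpha^k\charfn{L}(k)$ of $n\mapsto\alpha^n\crp{L}{n}$ from the generating function, apply Theorem~\ref{thm:mkpinterpolation} to get the criterion $\alpha(\charfn{L}(1)-\charfn{L}(p))\equiv 1 \Mod p$ via Fermat's little theorem, and finish with the finite case check for $\alpha\in\{\pm 1\}$. You have merely written out the case analysis that the paper leaves as ``follows easily,'' including the correct observation about why $p=2$ collapses the two twists.
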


This theorem encompasses 
a large class of sequences. 
In the following examples, 
we write $P_L$ for the set of primes 
for which the displayed sequence extends to 
a continuous function on $\ZZ_p$. 

\begin{example}[Squares]
\label{ex:crp2}
If $L = \{1,4,9,16, \ldots\}$ consists of the perfect squares,
then 
    \[ ( \crp{L}{n} )= 
    (1,\, 1,\, 1,\, 1,\, 7,\, 31,\, 91,\, 211,\, 1681,\, 52417,\, \ldots)\]
    and $P_{L} = \{\text{all primes}\}$. 
\end{example}

\begin{example}[Non-squares]
If $L = \NNpos \setminus \{1,4,9,16, \ldots\}$, 
then
    \[ ((-1)^n\crp{L}{n}) = 
    (1,\, 0,\, 1,\, -2,\, 3,\, -44,\, 175,\, -1434,\, 12313,\, -59912,\, \ldots)\]
    and $P_{L} = \{\text{all primes}\}$. 
\end{example}

\begin{example}[Powers of $3$]
\label{ex:crp3}
If $L := \{1, 3, 9, 27, \ldots \}$,
then 
$$
( \crp{L}{n} ) = ( 1,\, 1,\, 1,\, 3,\,9,\,21,\,81,\,351,\,1233,\,46089, \ldots)
$$
    and $P_{L} = \{\text{all primes except $3$}\}$. 
\end{example}

\begin{example}[Complement of powers of $3$]
If $L := \NNpos \setminus \{1, 3, 9, 27, \ldots \}$,
then 
$$
( (-1)^n\crp{L}{n} ) = 
( 1,\, 0,\, 1,\, 0,\, 9,\, -24,\, 225,\, -1224,\, 11025,\, -53136,\, \ldots )
$$
and $P_{L} = \{\text{all primes except $3$}\}$. 
\end{example}

\begin{example}[Classical derangements]
\label{ex:crp1}
%If $L = \{1\}$,
%        then $\crp{L}{n} = 1$ is constant. 
If $L := \{2,3,4,5,\ldots\}$,
then 
        $$((-1)^n\crp{L}{n}) = %((-1)^nd_n) = 
        (1,\, 0,\, 1,\, -2,\, 9,\, -44,\, 265,\, -1854,\, 14833,\, -133496,\, \ldots) $$ 
    are the classical derangement numbers (up to sign) 
    and $P_{L} = \{\text{all primes}\}$. 
\end{example}
\begin{example}[Prime numbers and complement]
If $L := \{2,3,5,7,\ldots\}$ consists of the prime numbers,
then
    \[ ((-1)^n\crp{L}{n}) = 
    (1,\, 0,\, 1,\, -2,\, 3,\, -44,\, 55,\, -1434,\, 3913,\, -39752,\, \ldots) \]
    and $P_L = \{\text{all primes}\}$. 
If $L' := \NNpos \setminus L$,
then 
    \[ ( \crp{L'}{n} )= 
    (1,\, 1,\, 1,\, 1,\, 7,\, 31,\, 211,\, 1051,\, 10081,\, 107857,\, \ldots) \]
    and $P_{L'} = \{\text{all primes}\}$ also. 
\end{example}
\begin{example}[Artin--Hasse complement]
    If $L_{(p)} = \{p,p^2,p^3,\ldots\}$,
    then $(-1)^n\crp{L_{(p)}}{n}$ is the $n$th EGF-coefficient of $e^{X}E_p(-X)$  
    and $P_{L_{(p)}}=\{p\}$. 
    For instance, if $p = 3$ we have 
    \[
      e^{X}E_3(-X) 
      =1 - \frac{1}{3}X^{3} + \frac{1}{18}X^{6} - \frac{19}{162}X^{9} + 
      \frac{73}{1944}X^{12} - \frac{181}{29160}X^{15} + O(X^{18}) 
    \]
    and 
    \[ ( (-1)^n\crp{L_{(3)}}{n} ) =  
    (1,\,0,\,0,\,-2,\,0,\,0,\,40,\,0,\,0,\,-42560,\,0,\,0,\,17987200,\,0,\,0, \ldots ). 
    \]
\end{example}

Theorem~\ref{thm:combinterpretationmkzeroone2}
and 
Theorem~\ref{thm:p-derangement} 
are proven using  
the following formal power series identity. 
%which gives a combinatorial power series identity. 

\begin{proposition}\label{prop:EGFcrp} % {{{
    Let $\crp{L}{n}$ denote 
    the number of permutations in $S_n$
    whose cycle lengths are in $L$.
%$$\crp{L}{n} = \#\{\sigma \in S_n  \;:\; 
%        \text{every cycle length of $\sigma$ is in } L \}.$$
    Then the exponential generating function 
    %$F(X) = \sum_{n\geq 0} \crp{L}{n} \frac{X^n}{n!} $ 
    of $\crp{L}{n}$ factors as %is given by  
    \begin{equation}
        \label{eqn:EGFcpn}
        \sum_{n\geq 0} \crp{L}{n} \frac{X^n}{n!} 
        = \prod_{{r \in L}} 
    \exp\left( \frac{X^r}{r} \right).
    \end{equation}
\end{proposition}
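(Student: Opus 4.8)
The plan is to prove the identity by counting permutations according to their cycle type and recognizing the resulting sum over cycle types as the coefficient expansion of the product on the right-hand side. Throughout, both sides are regarded as elements of $\QQ[[X]]$, so the argument is purely formal.

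First I would recall the classical count for permutations of a given cycle type: if $c_r$ denotes the number of cycles of length $r$, then the number of $\sigma \in S_n$ having exactly $c_r$ cycles of length $r$ for every $r$ (where necessarily $\sum_r r c_r = n$) equals
$$\frac{n!}{\prod_{r \geq 1} r^{c_r}\, c_r!},$$
the denominator accounting for the rotational symmetry within each cycle and the reordering of cycles of equal length. Summing over all cycle types supported on $L$ gives
$$\crp{L}{n} = \sum_{\substack{(c_r)_{r\in L}\\ \sum_r r c_r = n}} \frac{n!}{\prod_{r\in L} r^{c_r}\, c_r!}.$$
Dividing by $n!$, substituting into the exponential generating function, and writing $X^n = \prod_{r\in L} X^{r c_r}$ (valid since $n = \sum_r r c_r$), the double sum over $n$ and over constrained cycle types merges into a single unconstrained sum over tuples $(c_r)_{r\in L}$ of nonnegative integers. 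This sum factors as $\prod_{r\in L}\sum_{c\geq 0}\frac{1}{c!}(X^r/r)^c = \prod_{r\in L}\exp(X^r/r)$, yielding the claim.

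The main point requiring care is the legitimacy of this rearrangement — interchanging the sum over $n$ with the product over $r\in L$ and splitting into independent exponential series. I expect this to be the only genuine obstacle, and it dissolves once one observes that the coefficient of each fixed $X^n$ on both sides involves only finitely many cycle types: the constraint $\sum_r r c_r = n$ forces $c_r = 0$ for all $r > n$. Hence the manipulation is an identity of formal power series with no convergence issues, and even when $L$ is infinite the product $\prod_{r\in L}\exp(X^r/r)$ is a well-defined element of $\QQ[[X]]$ because only finitely many factors contribute to any given coefficient.

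Alternatively, this proposition is an instance of the exponential formula for labelled combinatorial structures: a single cycle on $r$ labelled points can be formed in $(r-1)!$ ways, so the EGF for one cycle whose length lies in $L$ is $\sum_{r\in L}\frac{(r-1)!}{r!}X^r = \sum_{r\in L}\frac{X^r}{r}$, and a permutation with all cycle lengths in $L$ is precisely an unordered set of such cycles, whose EGF is the exponential of the EGF for a single cycle. I would present the cycle-type computation as the primary argument, since it is entirely self-contained, and note the exponential-formula viewpoint as the conceptual explanation.
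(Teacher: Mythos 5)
Your proof is correct, but your primary argument takes a different route from the paper's. You count permutations directly by cycle type, using the formula $n!/\prod_{r} r^{c_r} c_r!$ for the number of permutations with $c_r$ cycles of length $r$, and then reorganize the resulting double sum into a product of exponential series; the formal legitimacy of the rearrangement is handled by observing that each coefficient of $X^n$ receives contributions from only finitely many cycle types. The paper instead works with combinatorial species: it takes as its starting point that the EGF of the species ``single $r$-cycle'' is $X^r/r$, invokes the species product/composition formula to get $\exp(X^r/r)$ for disjoint unions of $r$-cycles and then the finite product over $r \in L$ with $r \leq N$, and passes to the limit $N \to \infty$ in the $X$-adic topology. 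Your cycle-type computation is more elementary and self-contained (no black-boxed species machinery), at the cost of an explicit bookkeeping argument for the interchange of sum and product; the paper's species argument is shorter given the cited background and makes the structural reason for the factorization transparent. You correctly identify the species/exponential-formula viewpoint as the alternative, which is in fact the route the paper takes. Both treatments of the infinite product over $L$ (your coefficientwise finiteness observation versus the paper's $X$-adic limit of truncations) are equivalent and adequate.
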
 % }}}

For the proof it is convenient 
to use combinatorial species 
(cf.~\cite[\S1]{nelson1990} 
for a quick introduction 
or \cite{bll} for a comprehensive treatment). 
In the next proof, 
``generating function of a species'' 
always means the exponential generating function. 
%the basic facts and definitions). 

\begin{proof} % {{{
For a positive integer $r$, 
consider the subset of permutations in $S_n$ 
    equal to an $r$-cycle (empty if $n \neq r$). 
    %and 
    %no $1$-cycles if $r >1$. 
We have 
\[ \#\{ \sigma \in S_n \;:\; \sigma \text{ is an $r$-cycle}\} = \begin{cases}
(r-1)! &\text{if }n = r ,\\
0 & \text{otherwise}.
\end{cases}\]
The generating function of this sequence is 
$ \tfrac1{r}X^r$.
By the theory of species, $\exp({X^r/r})$ is 
    the generating function of the sequence 
\[ 
n \mapsto \#\{ \sigma \in S_n \;:\; \sigma \text{ decomposes into disjoint $r$-cycles}\}. \]
%\[ \exp(\tfrac1r X^r) = \sum_{n \geq 0} \frac{1}{r^n} \frac{X^{rn}}{n!}
%= \sum_{n \geq 0} \frac{(rn)!}{r^n n!} \frac{X^{rn}}{(rn)!} .\]
%Note that for a prime $p \neq r$,
%if $r >1$ then 
%the expression 
%$$\frac{(rn)!}{r^n n!}  = \prod_{\substack{1\leq k \leq rn \\ k\not\in r\ZZ}} k$$
%approaches zero $p$-adically as $n\to \infty$.

%To prove this we make use of the theory of combinatorial species. 
%Now as the definition of $\crp{L}{n}$ only involves 
%cycle lengths it is clear that it is associated with 
%a species $C$. 
A standard result from 
    the theory of species says that
the product of 
    the generating functions of two species $A$ and $B$ 
is the generating function of the species where we partition $[n]$ 
into disjoint subsets $S \sqcup T$, and impose an $A$-species on $S$ and a $B$-species on $T$.
This generalizes by induction to finite products. 

Consider the species 
    $$ 
    [n] \mapsto 
    \begin{cases}
        \{ \sigma \in S_n \;:\; \text{every cycle length of $\sigma$ is in } L\} 
        & \text{ if $n \leq N$,}\\
        \varnothing & \text{ otherwise.} 
    \end{cases}
    $$
The generating function of this species is 
    $\sum_{n = 0}^N \crp{L}{n} \frac{X^n}{n!}$. 
By the previous paragraphs, 
    this generating function factors as 
\[ \sum_{n= 0}^N \crp{L}{n} \frac{X^n}{n!}
    = \prod_{\substack{r \in L \\ r\leq N}} 
    \exp\left(\frac{X^r}{r}\right)
    = \prod_{r \in L} 
    \exp\left(\frac{X^r}{r}\right)
    \Mod{X^{N+1}}. 
\]
    Taking the limit as $N \to \infty$ 
    in the $X$-adic topology %on $\QQ[\![X]\!]$) 
    verifies \eqref{eqn:EGFcpn}.
%    The EGF of the sequence $c(\tilde P)_n$ is also given by 
%\[ \sum_{n\geq 0} c(\tilde P)_n \frac{X^n}{n!}
%= e^X \prod_{\substack{r \not\in \tilde P\\ r>1}} e^{X^r/r} .\]
\end{proof} % }}}

\begin{proof}[Proof of Theorem~\ref{thm:p-derangement}]
We apply the criterion of Theorem~\ref{thm:mkpinterpolation},
that a function on $\NN$ with integral $m_k$-values for $k \geq 2$ 
    is $p$-adically continuous 
    if and only if its $m$-values satisfy 
    $m_p \equiv m_1 - 1 \Mod p$.
Let $\alpha \in \ZZ$ and consider the function 
    $n \mapsto \alpha^n \crp{L}{n}$. 
This function has $m$-values
$$
m_k = \begin{cases}
\alpha^k &\text{if }k \in L, \\
0 &\text{otherwise.}
\end{cases}
$$ 
Let $\charfn{L}$ denote 
    the characteristic function of $L$, 
    so that $m_k = \alpha^k \charfn{L}(k)$. 
Then $n \mapsto \alpha^n \crp{L}{n}$ 
    is $p$-adically continuous if and only if 
    $m_1 - m_p \equiv 1 \Mod p$, i.e. 
    %$\alpha 1_L(1) -1 \equiv \alpha^p1_L(p) \Mod p$, or when  
    $$\alpha \charfn{L}(1) - \alpha^p \charfn{L}(p)
    \equiv \alpha( \charfn{L}(1) - \charfn{L}(p))
    \equiv 1 \Mod p .$$
The theorem now follows easily from 
    this congruence for $\alpha \in \{\pm 1\}$. 
%
%(a)
%The function $n \mapsto \crp{L}{n}$ has $m$-values
%$$
%m_k = \begin{cases}
%1 &\text{if }k \in L, \\
%0 &\text{otherwise.}
%\end{cases}
%$$
%If $1 \not \in L$,
%then 
%$m_1 - 1 = -1 \not\equiv 0,1 \Mod p $
%when $ p\geq 3$, so $m_1 - 1 \not\equiv m_p \Mod p$.
%If $1 \in L$, then $m_1 - 1 = 0 \equiv m_p \Mod p$ if and only if $p \not\in L$.
%
%(b)
%The function $n\mapsto (-1)^n\crp{L}{n}$ has $m$-values
%$$
%m_k = \begin{cases}
%(-1)^k  &\text{if }k \in L, \\
%0 &\text{otherwise}.
%\end{cases}
%$$
%If $1 \in L$ then 
%$
%m_1 - 1 = -2 \not\equiv 0,(-1)^p \Mod p
%$
%when $p\geq 3$.
%If $1 \not\in L$,
%then $m_1 -1 = -1 \equiv (-1)^p = m_p \Mod p$ if and only if $p\in L$. 
%
%(c) 
%%(Note that $n\mapsto (-1)^n$ is $2$-adically continuous.)
%%We use the same notation as in (a). \note{what notation?} 
%We have $m_1 - 1 \equiv m_2 \Mod 2$ if and only if 
%%$m_1 + m_2 \equiv 1 \Mod 2$
%$(m_1,m_2) = (1,0)$ or $(0,1)$.
\end{proof}

% }}}

\section{Cyclic arrangements and derangements}\label{sec:three} % {{{

In this section we study the $p$-adic properties  
of $\gs{n}{r}$ 
and demonstrate a formula for $\gs{n}{r}$ involving the floor function 
(Theorem~\ref{thm:floorfunctionformula}). 
We prove that 
$\gs{n}{r}$ is $p$-adically continuous 
as a function of two variables 
(Theorem~\ref{thm:gsjointcontinuity}), 
and give a precise description of 
the $p$-adic regularity of $n \mapsto \gs{n}{r}$ 
in terms of $|r|_p$ 
(Theorem~\ref{thm:gsinterpolates}). 
The results of this section will be used to construct 
our $p$-adic analogue of the incomplete gamma function 
in \S\ref{sec:picg}. 
%Along the way, we also explain how $\gs{n}{r}$ is closely related to a 
%$p$-adic analogue of the incomplete gamma function. 

\subsection{Wreath products}\label{sec:wreath} % {{{
Let $r$ and $n$ be positive integers. 
Let $C_r$ denote the cyclic group of order $r$. 
The symmetric group $S_n$ acts by precomposition on the set 
$\mathrm{Fun}([n],C_r) = \{ f \colon [n] \to C_r\}$ of functions, 
which is itself a group under pointwise operations. 
The wreath product is defined by 
$$C_r \wr S_n := \mathrm{Fun}([n],C_r) \rtimes S_n.$$  
This group has a natural permutation action on the set $C_r \times [n]$ 
given by 
\begin{equation}\label{eqn:wreathaction}
(f,\sigma)(s,m) = (s+f(\sigma(m)),\sigma(m)).
\end{equation}

The group $C_r \wr S_n$ can also be represented as  
the group of complex $n\times n$ matrices 
generated by diagonal matrices of order $r$  
and permutation matrices. 
In this representation,
the action on the set $C_r \times [n]$ defined above
is isomorphic to matrix multiplication on 
the set 
$\{ \zeta^i e_j :  i =0, 1,\ldots, r-1,\, j=1, 2, \ldots, n\}$
where $\zeta$ is a primitive $r$th root of unity and $e_j$
is a standard basis vector of $\CC^n$.

\begin{definition}
An \defn{$r$-cyclic derangement of degree $n$} is
an element of $C_r \wr S_n$ whose action on $C_r\times[n]$ has no fixed point. 
% Assaf showed that many classical formulas for 
% the number of derangements of $[n]$ 
% carry over to the number of cyclic derangements of $[r] \times [n]$. 
An \defn{$r$-cyclic arrangement of degree $n$} is 
% a pair $(A,(f,\sigma))$ consisting of 
a choice of subset $A \subset [n]$ and 
an element of $C_r \wr S_{|A|}$. 
\end{definition}

%In terms of matrices, an 
%$r$-cyclic derangement of degree $n$ is represented by an 
%$n \times n$ complex matrix with 
%exactly one nonzero entry in every row and column, 
%where those entries are taken from 
%the set of $r$th roots of unity, and 
%which has no ones on the diagonal. 
%An $r$-cyclic arrangement is represented by
%a subset $A \subset [n]$ together with an 
%$|A|\times|A|$ complex matrix with exactly one nonzero entry 
%in every row and column, where 
%those entries are taken from the set of $r$th roots of unity.

\begin{example}
\label{ex:arrangements}
There are five $2$-cyclic derangements of degree $2$.
They are given by the matrices
\begin{equation} % {{{
\begin{pmatrix}
0 & 1 \\
1 & 0 \\
\end{pmatrix}  
,
\begin{pmatrix}
0 & -1 \\
1 & 0 \\
\end{pmatrix}  
,
\begin{pmatrix}
0 & 1 \\
-1 & 0 \\
\end{pmatrix}  
,
\begin{pmatrix}
0 & -1 \\
-1 & 0 \\
\end{pmatrix}  
,
\begin{pmatrix}
{-1} & 0 \\
0 & -1 \\
\end{pmatrix}  
.
\end{equation} % }}}
There are three $2$-cyclic arrangements of degree $1$.
In the notation $(A,f)$ with $f \in C_r \wr S_{|A|}$ 
    in its matrix form, 
    they are given by 
    $\big(\varnothing,()\big)$, 
    $\big([1],(1)\big)$, 
    $\big([1],(-1)\big)$. 
\end{example} 

Recall that we have defined the combinatorial function 
(cf.~\eqref{eqn:defngs}) 
\begin{equation}
%    \label{eqn:defngs}
    \gs{n}{r} = 
    \begin{cases}
        \text{$\#$ of $r$-cyclic arrangements of degree $n$} 
            & \text{if $r > 0$,} \\
        (-1)^n (\text{$\#$ of $|r|$-cyclic derangements of degree $n$}) 
            & \text{if $r < 0$.}
    \end{cases}
\end{equation} 
We begin by deriving a formula for $\gs{n}{r}$ 
    which makes sense for nonintegral values of $r$. 

\begin{lemma}
%For any element $r$ in a ring and $n \in \NN$, let 
For any nonzero integer $r$ and positive integer $n$, 
    \begin{equation}\label{eqn:icumahler} % {{{
    \gs{n}{r} = 
    %\icgo {}su = % u = 1/r
    \sum_{k=0}^{n} 
    \binom{n}{k} k! {r}^{k}. 
    \end{equation} % }}}
\end{lemma}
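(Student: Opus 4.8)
The plan is to treat the cases $r>0$ and $r<0$ separately, since the definition of $\gs{n}{r}$ changes by a sign and switches between counting arrangements and derangements. In both cases the target is the single formula $\gs{n}{r}=\sum_{k=0}^n\binom{n}{k}k!\,r^k$, so the task is really to reconcile the two combinatorial definitions with this common expression.

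The case $r>0$ falls out directly from the definition. An $r$-cyclic arrangement of degree $n$ is a choice of subset $A\subseteq[n]$ together with an element of $C_r\wr S_{|A|}$, and $|C_r\wr S_k|=|\mathrm{Fun}([k],C_r)|\cdot|S_k|=r^k k!$. Grouping the subsets $A$ by their size $k$ and summing gives
$$\gs{n}{r}=\sum_{k=0}^n\binom{n}{k}r^k k!,$$
which is exactly the claimed identity.

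The case $r<0$ carries the real content. Writing $s=|r|=-r$, I would first pin down the fixed points of the action \eqref{eqn:wreathaction}: a point $(a,m)\in C_s\times[n]$ is fixed by $(f,\sigma)$ precisely when $\sigma(m)=m$ and $f(m)=0$, a condition independent of $a\in C_s$. Hence an $s$-cyclic derangement is exactly an $(f,\sigma)$ with no such ``fixed index'' $m$. I would then run inclusion--exclusion over the set of fixed indices. The crucial simplification is that the number of $(f,\sigma)$ whose fixed-index set contains a prescribed set $A$ depends only on $|A|$: such elements fix $A$ pointwise with $f|_A=0$ and are otherwise unconstrained on the complement, giving $(n-|A|)!\,s^{\,n-|A|}$ possibilities. (Equivalently, restricting any element to the complement of its fixed-index set yields the decomposition $s^n n!=\sum_{k}\binom{n}{k}D_k$ and one inverts binomially.) Either route produces, for the number $D_n$ of $s$-cyclic derangements of degree $n$,
$$D_n=\sum_{k=0}^n(-1)^{n-k}\binom{n}{k}k!\,s^k.$$

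Finally I would substitute $s=-r$, so that $s^k=(-1)^k r^k$, and multiply by $(-1)^n$:
$$\gs{n}{r}=(-1)^n D_n=\sum_{k=0}^n(-1)^{2n-k}(-1)^k\binom{n}{k}k!\,r^k=\sum_{k=0}^n\binom{n}{k}k!\,r^k,$$
since $(-1)^{2n-k}(-1)^k=(-1)^{2n}=1$. This matches the $r>0$ formula and completes the argument. The main obstacle is the bookkeeping in the derangement case, namely correctly characterizing the fixed points of the wreath action and verifying that the count of elements with a prescribed fixed-index set depends only on its cardinality, as it is precisely this uniformity that collapses the inclusion--exclusion into a single binomial sum.
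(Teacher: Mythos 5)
Your proposal is correct and follows essentially the same route as the paper: the $r>0$ case by summing $|C_r\wr S_k|=r^kk!$ over subsets of each size, and the $r<0$ case by the same inclusion--exclusion over the set of fixed indices $\{m:\sigma(m)=m,\ f(m)=0\}$, using that the count of elements fixing a prescribed index set depends only on its cardinality. The only difference is cosmetic bookkeeping (you work with $s=|r|$ and convert signs at the end, while the paper carries $(-r)$ through directly).
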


\begin{proof} %{*
We use a standard inclusion-exclusion argument. 
When $r$ is positive the formula follows  
    by summing $\#(C_r \wr S_k) = k!r^k$ 
    over the possible cardinalities $k$ of 
    the $\binom{n}{k}$ choices of subset $A \subset [n]$.
Suppose $r$ is negative. 
Observe from \eqref{eqn:wreathaction} that 
    %that $(f,\sigma) \in C_{|r|} \wr S_n$ fixes $(s,m)$ 
%if and only if $\sigma(m) = m$ and $f(m) = 0$,  
    the set of fixed points of $(f,\sigma)$ is equal to 
$C_{|r|} \times A$ where $A = \{m \in [n] : \sigma(m) = m,\, f(m)=0\}$. 
%In particular, the number of fixed points of $(f,\sigma)$ is a multiple of $|r|$. 
Then 
\begin{align*}
\#(\text{$|r|$-cyclic derangements})
&= \#( x \in C_{|r|} \wr S_n) - \sum_{i\in [n]} \#( x \text{ which fix }C_{|r|} \times \{i\}) \\
& \qquad + \sum_{ \{i,j\} \subset [n]}\#( x \text{ which fix } C_{|r|} \times \{i,j\}) - \cdots \\
&= \sum_{k= 0}^n (-1)^k \sum_{\substack{A \subset [n] \\ |A|=k}} \#( x \in C_{|r|} \wr S_n \text{ which fix }C_{|r|}\times A) .
% &= \sum_{k= 0}^n (-1)^k \binom{n}{k} \#(C_{|r|} \wr S_{n-k} )
\end{align*}
%In the last equality, the subgroup of $C_{|r|} \wr S_n$ fixing every element of $C_{|r|} \times A$ 
%is identified with $C_{|r|} \wr S_{n-k}$, where $k = |A|$.
For any subset $A \subset [n]$, 
    the subgroup of $C_{|r|} \wr S_n$ 
    fixing every element of $C_{|r|} \times A$ 
    is $C_{|r|} \wr S_{n-\# A}$ 
    which has cardinality $(-r)^{n-\# A} (n-\# A)!$. 
Thus $(-1)^n \gs{n}{r}$ is equal to 
\begin{align*} % {{{
    \#(\text{$|r|$-cyclic derangements}) &= 
    \sum_{k= 0}^n (-1)^k \binom{n}{k} (-r)^{n-k} (n-k)! \\
    &= 
(-1)^n \sum_{k=0}^n \binom{n}{k}r^k k!.\qedhere
\end{align*} % }}}
\end{proof} %*}

We henceforth use \eqref{eqn:icumahler} to define $\gs{n}{r}$ 
for all $r \in \CC_p$ and $n \in \NN$. 

%\begin{example}
%\label{ex:arrangement-count}
%Using \eqref{eqn:icumahler} 
%we see that 
%%there are three $2$-cyclic arrangements of degree $1$ 
%%    since 
%%    $$\gs{1}{2}=0!2^0\binom{1}{0}+1!2^1\binom{1}{1} = 3 .$$ 
%there are five $2$-cyclic derangements of degree $2$ since 
%    $$\gs{2}{-2} = 0!(-2)^0\binom{2}{0} + 1!(-2)^1\binom{2}{1} 
%    +2!(-2)^2\binom{2}{2}
%    = 5. $$ 
%%\begin{equation}
%%\gs{n}{r} = \sum_{k=0}^n k!\, r^k \binom{n}{k}. 
%%\end{equation}
%Compare with Example~\ref{ex:arrangements}.
%\end{example} 

% }}}

\subsection{\texorpdfstring{$p$}{p}-adic interpolation of \texorpdfstring{$\gs{n}{r}$}{a(n,r)}} % {{{

In this section we prove that $\gs{n}{r}$ is interpolated 
by a two-variable $p$-adically continuous function. 
First we interpolate 
$n \mapsto \gs{n}{r}$ for fixed $r \in \CC_p^\times$ 
(Theorem~\ref{thm:gsinterpolates}), 
and afterwards we interpolate $\gs{n}{r}$ 
in both variables 
(Theorem~\ref{thm:gsjointcontinuity}). 

%Thus 
%$$\#( x \in C_{|r|} \wr S_n \text{ with $\geq k|r|$ fixed points})
%%= \binom{n}{k}\#(C_{|r|} \wr S_{n-k}) 
%= \binom{n}{k}|r|^{n-k} (n-k)!. 
%$$ 
%This shows that  
%\begin{equation} % {{{
%(-1)^n
%\#(\text{$|r|$-cyclic derangements}) = 
%%\sum_{k= 0}^n (-1)^k \binom{n}{k} r^{n-k} (n-k)! 
%\sum_{k= 0}^n (-1)^{k} \binom{n}{k} r^{k} k! 
%    = \gs{n}{r}.
%\end{equation} % }}}

%For $r \in \ZZ^{\neq 0}$ this agrees with \eqref{eqn:defngs} 
%by a standard combinatorial argument. 
%By means of the chosen isomorphism $\CC \to \CC_p$, 
%we can also evaluate $\gs{n}{r}$ when $r \in \CC_p$. 

%We will implicitly identify 
%the different copies of $\algclosure{\QQ}$ inside of 
%$\CC$ and $\CC_p$ by using these isomorphisms.  

%Let $p$ be a prime integer. 
%The next theorem is our main result about 
%the $p$-adic interpolation of $n \mapsto \gs{n}{r}$. 

\begin{theorem}\label{thm:gsinterpolates} % {{{ 
Let $r\in\CC_p^\times$ 
    and let $\gs{\,\cdot\,}{r} \colon \NN \to \CC_p$ be defined by \eqref{eqn:icumahler}.
Then $n \mapsto \gs{n}{r}$ extends to 
    an analytic function 
    (resp. metric map, 
    locally analytic function) 
    on the closed unit disk 
    if and only if $|r|_p < 1$ 
    (resp. $|r|_p \leq 1$, 
    $|r|_p < p^{(p-1)^{-1}}$).  
    If $|r|_p \geq p^{(p-1)^{-1}}$ then 
    $n \mapsto \gs{n}{r}$ 
    does not extend to 
    a continuous function on $\ZZ_p$. 
\end{theorem}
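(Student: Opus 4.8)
The plan is to reduce every assertion to Theorem~\ref{thm:allinterpolations} by first identifying the finite differences (Mahler coefficients) of $n \mapsto \gs{n}{r}$. The defining formula \eqref{eqn:icumahler}, namely $\gs{n}{r} = \sum_{k=0}^n \binom{n}{k} k!\, r^k$, is exactly the inverse relation $f(n) = \sum_{k=0}^n \binom{n}{k}\findiff[k]$ with $\findiff[k] = k!\, r^k$. Since this relation determines the finite differences uniquely, the $n$th finite difference of $n \mapsto \gs{n}{r}$ is $\findiff[n] = n!\, r^n$. Thus each of the four regularity criteria becomes an explicit statement about $v_p(\findiff[n]) = v_p(n!) + n\, v_p(r)$, which I would analyze using Legendre's formula $v_p(n!) = \frac{n - s_p(n)}{p-1}$.

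The analytic and locally analytic cases are then clean limit computations. As $\findiff[n]/n! = r^n$, criterion (3) gives analyticity on the closed unit disk precisely when $|r|_p^n \to 0$, i.e. when $|r|_p < 1$. For criterion (4) I would use $s_p(n)/n \to 0$ to compute
$$\liminf_{n\to\infty}\frac{v_p(\findiff[n])}{n} = \liminf_{n\to\infty}\left(\frac1{p-1} - \frac{s_p(n)}{(p-1)n} + v_p(r)\right) = \frac1{p-1} + v_p(r),$$
which is strictly positive exactly when $v_p(r) > -\tfrac1{p-1}$, that is, when $|r|_p < p^{1/(p-1)}$.

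The metric-map case requires \emph{uniform} rather than asymptotic control. Here criterion (2) unwinds to the requirement $v_p(\findiff[n]) \geq \floor{\frac{\log n}{\log p}}$ for every $n \geq 1$. When $|r|_p \leq 1$, so $v_p(r) \geq 0$, this follows from the elementary inequality $v_p(n!) \geq \floor{\log_p n}$, which I would verify by reducing to $n = p^j$ and noting $v_p(p^j!) = 1 + p + \cdots + p^{j-1} \geq j$. For the converse, the case $n = 1$ gives $v_p(\findiff[1]) = v_p(r)$, so the condition forces $v_p(r) \geq 0$ and fails as soon as $|r|_p > 1$.

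Finally, for the threshold claim I would again restrict to the subsequence $n = p^j$, where $v_p(\findiff[p^j]) = \frac{p^j - 1}{p-1} + p^j\, v_p(r)$; this equals the constant $-\tfrac1{p-1}$ when $v_p(r) = -\tfrac1{p-1}$ and tends to $-\infty$ when $v_p(r) < -\tfrac1{p-1}$, so in either case $v_p(\findiff[n]) \not\to \infty$ and Mahler's criterion (1) is violated once $|r|_p \geq p^{1/(p-1)}$. I expect the main obstacle to lie in these last two parts: establishing the uniform inequality $v_p(n!) \geq \floor{\log_p n}$ for all $n$ (not merely along powers of $p$), and correctly pinning down the boundary $|r|_p = p^{1/(p-1)}$ by exhibiting the subsequence $n = p^j$ along which the finite differences fail to decay.
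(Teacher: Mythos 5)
Your proposal is correct and follows essentially the same route as the paper: read off the Mahler coefficients $\findiff[n]=n!\,r^n$ from \eqref{eqn:icumahler}, apply the four criteria of Theorem~\ref{thm:allinterpolations} together with Legendre's formula, and handle the boundary $|r|_p=p^{1/(p-1)}$ along the subsequence $n=p^j$ where $s_p(p^j)=1$. The one step you flag as a possible obstacle, the uniform inequality $v_p(n!)\geq\floor{\log_p n}$ for all $n$, is immediate since $p^{\floor{\log_p n}}\leq n$ is itself one of the factors of $n!$ (equivalently, your reduction to $n=p^j$ works because $v_p(n!)$ is nondecreasing while $\floor{\log_p n}$ only jumps at powers of $p$).
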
 % }}}

%This theorem proves 
%the first half of Theorem~\ref{thm:locanalytic}. 
%As the inequalities 
%$|r|_p \leq 1 < p^{(p-1)^{-1}}$ 
%are always satisfied 
%whenever $r$ is an integer, 
%we obtain the following corollary.  

\begin{corollary} % {{{
For any prime $p$ and integer $r$, 
    the function $n \mapsto \gs{n}{r}$
    extends to a 
    locally analytic metric map on $\ZZ_p$. 
    If $p$ divides $r$, 
    then $n \mapsto \gs{n}{r}$ extends to an 
    analytic metric map on $\ZZ_p$. 
\end{corollary}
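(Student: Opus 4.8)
The plan is to derive the corollary directly from Theorem~\ref{thm:gsinterpolates}, the point being that every rational integer $r$ has $p$-adic norm $|r|_p \leq 1$. First I would dispose of the degenerate case $r=0$: in the defining formula \eqref{eqn:icumahler} only the $k=0$ term survives, so $\gs{n}{0}=1$ for all $n$, and the function is constant, hence trivially an analytic metric map on $\ZZ_p$. Since $p \mid 0$, this also settles the second assertion when $r=0$, so from now on I may assume $r$ is a nonzero integer and apply Theorem~\ref{thm:gsinterpolates} (which requires $r \in \CC_p^\times$).

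For the first assertion I would invoke the theorem with the two inequalities $|r|_p \leq 1$ and $|r|_p < p^{(p-1)^{-1}}$. The first holds because $r \in \ZZ \subset \ZZ_p$, and the second follows from it together with the elementary bound $p^{(p-1)^{-1}} > 1$ (as $(p-1)^{-1}>0$ and $p>1$). The metric-map clause of the theorem is governed by $|r|_p \leq 1$ and the locally-analytic clause by $|r|_p < p^{(p-1)^{-1}}$, so both criteria are in force. Because $\NN$ is dense in $\ZZ_p$, the continuous extension is unique; hence the single extended function is simultaneously a metric map and locally analytic, which is exactly the claim that $n \mapsto \gs{n}{r}$ extends to a locally analytic metric map.

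For the refinement when $p \mid r$ (still with $r \neq 0$), I would observe that divisibility sharpens the bound to $|r|_p \leq p^{-1} < 1$. The analytic clause of Theorem~\ref{thm:gsinterpolates} requires precisely $|r|_p < 1$, so the extension is now analytic; and $|r|_p < 1$ still yields $|r|_p \leq 1$, so it remains a metric map. By the same uniqueness of the continuous extension, the extended function is an analytic metric map.

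I do not expect any genuine obstacle: the corollary is a bookkeeping consequence of the iff-criteria in Theorem~\ref{thm:gsinterpolates}. The only points worth stating explicitly are the inequality $p^{(p-1)^{-1}} > 1$, the uniqueness of the continuous extension that allows the two separate regularity conclusions to be combined into one function, and the separate (trivial) treatment of $r=0$, which the theorem does not directly cover.
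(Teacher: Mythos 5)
Your proposal is correct and is essentially the argument the paper intends: the corollary is read off from the iff-criteria of Theorem~\ref{thm:gsinterpolates} using $|r|_p \leq 1$ for $r \in \ZZ$ and $|r|_p \leq p^{-1} < 1$ when $p \mid r$, together with uniqueness of the continuous extension on the dense subset $\NN \subset \ZZ_p$. Your separate handling of $r = 0$ (where \eqref{eqn:icumahler} gives the constant function $1$, not covered by the theorem's hypothesis $r \in \CC_p^\times$) is a careful touch the paper leaves implicit.
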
 % }}}

\begin{remark}\label{rmk:anrmkpinterpolation}
We note that $n \mapsto \gs{n}{r}$ 
has integral $m$-values if $r \in \ZZ_p$ (cf.~\eqref{eqn:mvalues}), 
    and is therefore encompassed 
    by Theorem~\ref{thm:mkpinterpolation}. 
%Indeed the $m$-values are computed as follows. 
Indeed from \eqref{eqn:productbinomialconvolution} we have 
\begin{multline}
    \sum_{n=0}^\infty \gs{n}{r} \frac{X^n}{n!}
    =
    e^X \cdot 
    \sum_{n=0}^\infty 
        n! {r}^{n} 
        \frac{X^n}{n!}
        = e^X (1-rX)^{-1}\\
        = \exp\left( (1+r)\tfrac{X}{1} + r^2\tfrac{X^2}{2}+ 
        r^3\tfrac{X^3}{3} + \cdots \right)
\end{multline}
from which we can read off the $m$-values. 
The required congruence \eqref{eqn:nvaluescongruence} 
for continuity is satisfied: 
    $m_p = r^p \equiv r = (r+1) -1 \Mod{p\ZZ_p}$. 
\end{remark}

%The observation that $n \mapsto \gs{n}{-1}$ can be 
%extended to a $p$-adic locally analytic function was 
%also proven in \cite{barsky1981analyse} 
%using a formal Laplace transform. 

\begin{proof}[Proof of Theorem~\ref{thm:gsinterpolates}] % {{{

%    \note{simplify this proof in light of what was proven in section 2}
%The EGF of $n \mapsto \gs{n}{r}$ is 
%    \begin{equation}%\label{eqn:icumahler}
%        \sum_{n=0}^\infty \gs{n}{r} \frac{X^n}{n!}
%        =
%    \sum_{n=0}^\infty 
%    \left(
%    \sum_{k=0}^{n} 
%    \binom{n}{k}
%    {r}^{k}
%    k!
%    \right)
%    \frac{X^n}{n!}
%    =
%    e^X
%    \sum_{n=0}^\infty 
%    {r}^{n}{X^n} 
%        = e^X (1-rX)^{-1}.
%    \end{equation}
%Then $e^X (1-rX)^{-1} = 
%    \exp\left((1+r)X + r^2X^2/2 + \cdots\right)$ 
%so the $k$th $m$-value of $n \mapsto \gs{n}{r}$ 
%is $m_k = r^k$ for $k \geq 2$ and $m_1 = 1+r$. 
%By Theorem~\ref{} 
%    the function $n \mapsto \gs{n}{r}$ 
%extends to a locally analytic function 
%on the closed unit disk if and only if 
%    $r^p \equiv r \Mod {\mathfrak m_p}$. 
%The other assertions of the theorem 
%    follow easily from the fact that 
%    the $k$th finite difference of 
%    $n \mapsto \gs{n}{r}$ is $r^k$ and 
%    applying the interpolation criteria in 
%    \S\ref{sec:two}. 
%    which shows the EGF of the finite differences 
%    of $n \mapsto \gs{n}{r}$ is $(1-rX)^{-1}$. 

%Let $\findiff\colon \NN \to \CC_p$ be the sequence 
%    of finite differences of 
%    $\gs{\,\cdot\,}{r}$, meaning
%    \begin{equation}
%        c(n) = \sum_{k=0}^n (-1)^{n-k} \binom{n}{k} \gs{k}{r}
%    \quad\text{or equivalently}\quad
%    \gs{n}{r} = \sum_{k=0}^n c(k)\binom{n}{k}.
%    \end{equation}
    %Then \eqref{eqn:icumahler} shows 
The $k$th finite difference of 
    $n \mapsto \gs{n}{r}$ is $\findiff[k] = r^{k}k!$. 
    By Theorem~\ref{thm:allinterpolations}, $\gs{n}{r}$ extends to a 
    $p$-adic analytic function on the closed unit disk if and only if 
    $|c(n)/n!|_p = |r|_p^n \to 0$ as $n \to \infty$, and this occurs 
    if and only if $|r|_p < 1$. This proves the first equivalence. 
   % 
   % Now we prove the second equivalence. 

    Now observe that $p^{\floor{\frac{\log n}{\log p}} }$ is 
    the largest power of $p$ that is less than or equal to $n$. 
    It follows that 
    $$
    \sup_{n \geq 1} 
        |n!|_p\, p^{\floor{\frac{\log n}{\log p}} } \leq 1. 
    $$
    Clearly, if $|r|_p \leq 1$ then we have 
    $$
    \sup_{n \geq 1} 
        |\findiff[n]|_p\, p^{\floor{\frac{\log n}{\log p}} } 
        =
    \sup_{n \geq 1} 
        |r^nn!|_p\, p^{\floor{\frac{\log n}{\log p}} } 
        \leq 1, 
    $$
    in which case $\gs{n}{r}$ extends to a metric map on $\ZZ_p$ 
    by Theorem~\ref{thm:allinterpolations}. 
    On the other hand, if $|r|_p > 1$ then 
    $$
    \sup_{n \geq 1} 
        |r^nn!|_p\, p^{\floor{\frac{\log n}{\log p}} }
       \geq |r|_p > 1, 
    $$
    showing that 
    $\gs{n}{r}$ does not extend to a metric map on $\ZZ_p$. 

    Now we prove the third equivalence. 
    Using Legendre's formula for $v_p(n!)$ 
    we obtain 
    %the $p$-adic valuation 
    %of $n!$ we obtain 
    $\valn[p](\findiff[n]) = 
        n\left((p-1)^{-1} + \valn[p](r)\right) - s_p(n)(p-1)^{-1}$ 
    where $s_p(n)$ is the sum of the digits of $n$ in base-$p$. 
    In particular, we have 
    \begin{equation}\label{eqn:mahlerseq2} % {{{
        \valn[p](\findiff[n]) = 
        n\left((p-1)^{-1} + \valn[p](r)\right) - O(\log n).
    \end{equation} % }}}
    Thus 
    $$
    \liminf_{n \to \infty} 
    \left(\frac{\valn[p](\findiff[n])}{n}\right)
    =(p-1)^{-1}+\valn[p](r).
    $$
    By Amice's theorem (Theorem~\ref{thm:allinterpolations}), 
    $\gs{n}{r}$ extends to 
    a $p$-adic locally analytic function 
    on the closed unit disk  
    if and only if 
    $(p-1)^{-1}+\valn[p](r)>0$, 
    or equivalently, $|r|_p < p^{(p-1)^{-1}}$.

    For the last claim, 
    from \eqref{eqn:mahlerseq2} it is clear that 
    $\valn[p](\findiff[n]) \to -\infty$ 
    if $(p-1)^{-1} + \valn[p](r) < 0$, or 
    equivalently, if $|r|_p > p^{(p-1)^{-1}}$. 
    On the other hand, if $(p-1)^{-1} + \valn[p](r) = 0$ 
    then 
    \begin{equation}
        \valn[p](\findiff[n]) = - s_p(n)(p-1)^{-1}. 
    \end{equation}
    Since $s_p(p^N) = 1$ for any positive integer $N$, 
    we see that in this case $\valn[p](\findiff[n])$ 
    does not go to $\infty$ as $n \to \infty$. 
    We conclude that $\gs{n}{r}$ does not 
    admit an extension to a continuous function 
    on $\ZZ_p$ when $|r|_p \geq p^{(p-1)^{-1}}$. 
\end{proof} % }}}

%Now we turn to the problem of joint continuity. 
Recall a two-variable function 
$f \colon \ZZ_p \times \ZZ_p \to \ZZ_p$ is a metric map 
if for all $x,y,r,s$, 
\begin{equation} % {{{
    |f(x,r)-f(y,s)|_p \leq \max\{ |x-y|_p,|r-s|_p\}.
    %\,\,\,
    %\text{for all $(x,r),(y,s) \in \ZZ_p \times \ZZ_p$.}
\end{equation} % }}}

\begin{theorem}\label{thm:gsjointcontinuity} % {{{ 
The function 
    $\NN \times \NNpos \to \ZZ_p 
    \colon (n,r) \mapsto \gs{n}{r}$ extends to 
    a metric map $\tilde{a} \colon \ZZ_p \times \ZZ_p \to \ZZ_p$. 
    Explicitly, it is given by the convergent series  
\begin{equation}\label{eqn:gsjointcontinuityformula} % {{{
    \gsp{y}{r} 
    %= \sum_{k=0}^\infty s^kk! \binom{y}{k}
= 1+ \sum_{k=1}^\infty r^k y(y-1)\cdots(y-k+1). 
    %\,\,\,
    %\text{for $(y,r)\in\ZZ_p \times \ZZ_p$}. 
\end{equation} % }}}
\end{theorem}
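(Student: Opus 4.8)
The plan is to adopt the right-hand side of \eqref{eqn:gsjointcontinuityformula} as the \emph{definition} of $\gsp{y}{r}$ (the claimed extension $\tilde{a}$) on $\ZZ_p \times \ZZ_p$, and then to check in turn that (i) the series converges to an element of $\ZZ_p$, (ii) it agrees with $\gs{n}{r}$ on $\NN \times \NNpos$, and (iii) it satisfies the metric-map inequality. Writing $y^{\underline{k}} = y(y-1)\cdots(y-k+1)$ for the falling factorial, the series reads $\gsp{y}{r} = \sum_{k \geq 0} r^k y^{\underline{k}}$. For the agreement in (ii) I would rewrite \eqref{eqn:icumahler} as
\[
\gs{n}{r} = \sum_{k=0}^n \binom{n}{k} k!\, r^k = \sum_{k=0}^n n^{\underline{k}} r^k,
\]
and observe that $n^{\underline{k}} = 0$ for every integer $k > n$, so that this finite sum coincides with the full series evaluated at $y = n$.

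For convergence I would invoke the fact (already used in the proof of Theorem~\ref{thm:msequenceLA}) that $|y^{\underline{k}}|_p \to 0$ as $k \to \infty$ for any $y \in \ZZ_p$; since $|r|_p \leq 1$, the general term $r^k y^{\underline{k}}$ tends to $0$, and as $\ZZ_p$ is complete and contains every partial sum, the series converges in $\ZZ_p$. Uniqueness of the extension is then automatic, because $\NN \times \NNpos$ is dense in $\ZZ_p \times \ZZ_p$ and a continuous function is determined by its restriction to a dense set.

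The substance of the proof is step (iii), which I would reduce to two contraction estimates, each valid for arguments in $\ZZ_p$. First, the factorization $r^k - {r'}^{k} = (r - r')(r^{k-1} + r^{k-2}r' + \cdots + {r'}^{k-1})$ shows $|r^k - {r'}^{k}|_p \leq |r - r'|_p$. Second, since $y^{\underline{k}}$ is a polynomial with \emph{integer} coefficients, the difference $y^{\underline{k}} - {y'}^{\underline{k}}$ is divisible by $y - y'$ in $\ZZ[y,y']$, so specializing at $y, y' \in \ZZ_p$ gives $|y^{\underline{k}} - {y'}^{\underline{k}}|_p \leq |y - y'|_p$. I would then split each term of the difference as
\[
r^k y^{\underline{k}} - {r'}^{k} {y'}^{\underline{k}} = r^k\big(y^{\underline{k}} - {y'}^{\underline{k}}\big) + \big(r^k - {r'}^{k}\big)\, {y'}^{\underline{k}},
\]
and, using $|r|_p \leq 1$ and $|{y'}^{\underline{k}}|_p \leq 1$, bound each piece by $\max\{|y - y'|_p,\, |r - r'|_p\}$. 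Summing over $k$ and applying the ultrametric inequality yields $|\gsp{y}{r} - \gsp{y'}{r'}|_p \leq \max\{|y - y'|_p,\, |r - r'|_p\}$, which is exactly the metric-map condition.

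The only point requiring more than ultrametric bookkeeping is the second contraction estimate — that the falling factorials are non-expanding in the $y$-variable — and I expect this to be the crux; it rests on the integrality of the coefficients of $y^{\underline{k}}$ (equivalently, on $y^{\underline{k}} = k!\binom{y}{k}$ with $\binom{y}{k} \in \ZZ_p$). As an alternative for this estimate one could instead deduce that $y \mapsto \gsp{y}{r}$ is a metric map from Theorem~\ref{thm:gsinterpolates} together with part~(2) of Theorem~\ref{thm:allinterpolations}, and treat the $r$-variation separately; but the direct approach above avoids any appeal to the finite-difference criteria.
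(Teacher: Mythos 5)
Your proposal is correct. It proves the same statement by a route that overlaps with the paper's in the $r$-direction but is genuinely different, and more self-contained, in the $y$-direction. The paper first invokes Theorem~\ref{thm:gsinterpolates} (whose proof runs through the finite-difference criteria of Theorem~\ref{thm:allinterpolations}) to obtain convergence of the Mahler series and the bound $|\gsp{x}{r}-\gsp{y}{r}|_p\leq|x-y|_p$ for fixed $r$, then handles the $r$-variation exactly as you do, via $|r^k-s^k|_p\leq|r-s|_p$ and $|y^{\underline{k}}|_p\leq 1$ applied termwise, and finally combines the two one-variable estimates with the ultrametric inequality. You instead establish everything directly from the series: convergence from $|y^{\underline{k}}|_p\leq|k!|_p\to 0$, agreement on $\NN\times\NNpos$ from the vanishing of $n^{\underline{k}}$ for $k>n$, and the $y$-Lipschitz bound from the observation that $y^{\underline{k}}-{y'}^{\underline{k}}$ is divisible by $y-y'$ in $\ZZ[y,y']$, so that $|y^{\underline{k}}-{y'}^{\underline{k}}|_p\leq|y-y'|_p$. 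That divisibility argument is sound (each monomial difference $y^j-{y'}^j$ factors with integer quotient), and it replaces the appeal to Mahler-type criteria with an elementary algebraic fact; the trade-off is that the paper's version reuses machinery it has already built and needs anyway for the one-variable regularity statements, whereas yours would stand on its own even without Theorem~\ref{thm:gsinterpolates}. Your two-term splitting of $r^k y^{\underline{k}}-{r'}^k{y'}^{\underline{k}}$ is just the termwise version of the paper's ``change one variable at a time'' step, so the overall architecture agrees; the new content is the contraction estimate for falling factorials, and you have identified it correctly as the crux.
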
 % }}}

\begin{proof} % {{{
Let $(x,r),(y,s)\in\ZZ_p \times \ZZ_p$. 
    The function $n \mapsto \gs{n}{r}$ admits a 
    continuous extension in the first variable (denoted $\gsp{\,\cdot\,}{r}$) 
    by Theorem~\ref{thm:gsinterpolates}.  
    %so its Mahler series is convergent. 
From \eqref{eqn:icumahler}, 
    the Mahler series of $\gsp{\,\cdot\,}{r}$ is 
\begin{equation}\label{eqn:alpha} % {{{
    \gsp{y}{r} = \sum_{k=0}^\infty r^kk! \binom{y}{k}
    = 1+ \sum_{k=1}^\infty r^k y^{\ul k} 
\end{equation} % }}}
where $y^{\ul k} := y(y-1) \cdots (y-k+1)$.
This sum is absolutely convergent since $\gsp{\,\cdot\,}{r}$ is continuous. 
This shows that the assignment $(y,r) \mapsto \gsp{y}{r}$ is well-defined. 

    By absolute convergence we can rearrange \eqref{eqn:alpha} to obtain 
\begin{align}\label{eqn:differenceinsecondvariable}
    |\gsp{y}{r} - \gsp{y}{s}|_p 
        &= \Big|\sum_{k=1}^\infty  (r^k  -s^k )y^{\ul k} \Big|_p\\
        &\leq \max_{k\geq 1} \big(|r^k-s^k|_p \,  |y^{\ul k}|_p \big)\\
        &\leq |r-s|_p. 
\end{align}
In the last inequality, we have used that 
    $|r^k - s^k|_p \leq |r-s|_p$
and $|y^{\ul k}|_p \leq 1$
for $k\geq 1 $ and $y,r,s\in \ZZ_p$.

We have the inequality 
    $|\gsp{x}{r} - \gsp{y}{r}|_p \leq |x - y|_p$ 
    since $\gsp{\,\cdot\,}{r}$ is a metric map by Theorem~\ref{thm:gsinterpolates}. 
Combining this inequality with \eqref{eqn:differenceinsecondvariable} 
we obtain 
\begin{align} % {{{
    |\gsp{x}{r} - \gsp{y}{s}|_p 
%    &= |\alpha_r(x) - \alpha_r(y) + \alpha_r(y) - \alpha_s(y)|_p \\
    &\leq \max\{ |\gsp{x}{r}-\gs{y}{r}|_p,|\gsp{y}{r}-\gsp{y}{s}|_p \} \\
%    &= \Big|\sum_{k=0}^\infty     \big( r^k x^{\ul k} -s^k y^{\ul k} \big)\Big|_p\\
%    &\leq \max_{k\geq 0} |r^k x^{\ul k} -s^k y^{\ul k} |_p\\
%    &= \max_{k \geq 0} |r^k x^{\ul k} - r^k y^{\ul k} + r^k y^{\ul k} - s^k y^{\ul k} |_p \\
%    &\leq \max_{k\geq 0} \max\big(   |r|^k |x^{\ul k} -y^{\ul k}|, |y^{\ul k}|_p |r^k-s^k|_p\big)\\
    &\leq \max\{ |x-y|_p,|r-s|_p \}. 
\end{align} % }}}
This inequality completes the proof. 
\end{proof} % }}}

% }}}

\subsection{An archimedean formula for \texorpdfstring{$\gs{n}{r}$}{a(n,r)}}\label{sec:floorformulaanr} % {{{

%Recall that for $r$ a nonzero integer, we have defined 
%\begin{equation}
%%    \label{eqn:defngs}
%    \gs{n}{r} := 
%    \begin{cases}
%        (-1)^n (\text{$\#$ of $|r|$-cyclic derangements of degree $n$}) 
%            & \text{if $r < 0$,}\\
%        \text{$\#$ of $r$-cyclic arrangements of degree $n$} 
%            & \text{if $r > 0$.}
%    \end{cases}
%\end{equation}

The conclusions of 
Theorem~\ref{thm:gsinterpolates} and 
Theorem~\ref{thm:gsjointcontinuity} 
are rather surprising 
in view of the following beautiful formula for $\gs{n}{r}$ 
in terms of the floor function. 

\begin{theorem}\label{thm:floorfunctionformula}
Let $r$ be a nonzero integer and let $n$ be a positive integer. 
%    and define 
%\begin{equation}
%    %\label{eqn:defngs}
%    \gs{n}{r} := 
%    \begin{cases}
%        (-1)^n (\text{$\#$ of derangements of $[-r] \times [n]$}) & \text{if $r < 0$,}\\
%        \text{$\#$ of arrangements of $[r] \times [n]$} & \text{if $r > 0$.}
%    \end{cases}
%\end{equation}
Then 
\begin{equation}
    \label{eqn:explicitalgu}
    \gs{n}{r} = 
\begin{cases}%\label{eqn:integrallocseq}
    \floor{e^{1/{r}}  r^n n! + \tfrac{1}{2}}
        & \text{if $r<0$,}\\
    \floor{e^{1/{r}}r^n n!} 
        & \text{if $r>0$.}
\end{cases}
\end{equation} 
\end{theorem}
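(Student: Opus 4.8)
The plan is to compare $\gs{n}{r}$ against the \emph{complete} exponential series of which it is a partial sum. Starting from the closed form \eqref{eqn:icumahler} and using $\binom{n}{k}k! = n!/(n-k)!$, I would reindex by $j = n-k$ to obtain
\[
    \gs{n}{r} = n!\, r^n \sum_{j=0}^{n} \frac{(1/r)^j}{j!},
\]
which is precisely the $n$th partial sum of the Taylor expansion of $e^{1/r} r^n n!$. Consequently the difference
\[
    T := e^{1/r} r^n n! - \gs{n}{r}
       = n!\, r^n \sum_{j > n} \frac{(1/r)^j}{j!}
       = \sum_{i \geq 1} \frac{1}{r^i\,(n+1)(n+2)\cdots(n+i)}
\]
is the tail of this series. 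Since $\gs{n}{r} \in \ZZ$, the theorem reduces to the elementary estimates $0 < T < 1$ when $r > 0$ and $-\tfrac12 \leq T < \tfrac12$ when $r < 0$: the former gives $\floor{e^{1/r} r^n n!} = \gs{n}{r}$, while the latter gives $\floor{e^{1/r} r^n n! + \tfrac12} = \gs{n}{r}$.

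For $r > 0$ we have $r \geq 1$ and every summand of $T$ is positive, so $T > 0$; bounding each factor $n+\ell$ below by $n+1$ gives $T \leq \sum_{i \geq 1} (n+1)^{-i} = 1/n \leq 1$, and the inequality is strict because the $i \geq 2$ terms are strictly smaller than $(n+1)^{-i}$. For $r < 0$ I would write $r = -s$ with $s \geq 1$, so that $T = \sum_{i \geq 1} (-1)^i a_i$ with $a_i := \big(s^i (n+1)(n+2)\cdots(n+i)\big)^{-1}$ positive and strictly decreasing (the ratio $a_{i+1}/a_i = 1/(s(n+i+1)) < 1$). The alternating series estimate then traps $T$ between consecutive partial sums, $-a_1 \leq T \leq -a_1 + a_2 < 0$, and since $a_1 = 1/(s(n+1)) \leq \tfrac12$ this yields $-\tfrac12 \leq T < \tfrac12$, as needed.

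The only genuinely delicate point is pinning down these exact constants, and in particular verifying that the alternating tail for negative $r$ stays inside $[-\tfrac12, \tfrac12)$ uniformly in $n$ and $s$ — this is precisely what forces rounding to the nearest integer (the $+\tfrac12$ correction) when $r < 0$, as opposed to plain truncation when $r > 0$. The boundary case $n = 1$, $|r| = 1$, where $a_1 = \tfrac12$, is where one must be careful about strict versus non-strict inequalities, but the alternating estimate handles it cleanly: the strictly positive term $a_2$ keeps $T$ away from the endpoint $-\tfrac12$, and in the positive case the $i \geq 2$ terms keep $T$ strictly below $1$.
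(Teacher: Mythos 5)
Your proof is correct, and it starts from the same decomposition the paper uses: writing $e^{1/r}r^n n! = \gs{n}{r} + R$ where $R = \sum_{i\geq 1} r^{-i}\big((n+1)\cdots(n+i)\big)^{-1}$ is the tail of the exponential series, then showing $R \in (0,1)$ for $r>0$ and $R \in (-\tfrac12,0)$ for $r<0$. The positive case is handled essentially identically (the paper bounds $R \leq R_{1,1} = e-2 < 1$ by monotonicity in $r$ and $n$; your geometric-series bound $R < 1/n \leq 1$ is equally good). The genuine divergence is in the negative case, where the paper takes a more laborious route: it pairs consecutive terms of the alternating tail to show $R<0$, proves $|R|$ is decreasing in $n$ by computing the log-derivative of each paired summand, and then evaluates $|R_{r,1}| = 1+r-re^{1/r}$ in closed form to check it lies in $(0,\tfrac12)$. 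Your Leibniz alternating-series estimate replaces all of that: since $a_i = \big(s^i(n+1)\cdots(n+i)\big)^{-1}$ decreases strictly to zero, the sum is trapped between the first two partial sums, giving $-a_1 \leq T \leq -a_1+a_2 < 0$ with $a_1 = 1/(s(n+1)) \leq \tfrac12$ immediately. This is shorter and avoids calculus entirely; what the paper's argument buys in exchange is the explicit worst-case value of the remainder and its monotonicity in $n$, neither of which is needed for the theorem. One minor remark: the lower bound $-a_1 \leq T$ is in fact strict (the series has infinitely many nonzero terms, as you note at the end), but the non-strict version already suffices for $\floor{\gs{n}{r}+T+\tfrac12} = \gs{n}{r}$, so nothing is at stake there.
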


The cases $r \in \{\pm 1\}$ are well-known 
(e.g.~\cite{hassani-2003}). 
In view of its $p$-adic properties, 
the function $\gs{n}{r}$ thus exhibits 
a surprising harmony between the $p$-adic and 
real topologies, the latter of which 
is used in \eqref{eqn:explicitalgu} 
to define the ordering on $\RR$. 

\begin{proposition} % {{{
    \label{prop:hallsequencespositive}
Let $r$ be a positive integer and let
$\gs{n}{r}$ denote the number of $r$-cyclic arrangements of degree $n$.
Then 
\begin{equation}
    \gs{n}{r} = 
    \floor{e^{\tfrac 1{r}}r^n n!} \quad (n \in \NNpos). 
\end{equation}
\end{proposition}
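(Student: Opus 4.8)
The plan is to reduce the claim to an elementary estimate on the tail of the exponential series. Starting from the closed form \eqref{eqn:icumahler}, namely $\gs{n}{r} = \sum_{k=0}^n \binom{n}{k} k! r^k$, I would first rewrite it in a shape that matches a partial sum of $e^{1/r}$. Since $\binom{n}{k} k! = n!/(n-k)!$, re-indexing by $j = n-k$ gives
\begin{equation}
\gs{n}{r} = n! \sum_{k=0}^n \frac{r^k}{(n-k)!} = r^n n! \sum_{j=0}^n \frac{(1/r)^j}{j!},
\end{equation}
which exhibits $\gs{n}{r}$ as $r^n n!$ times the degree-$n$ truncation of the Taylor series of $e^{1/r}$.

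Next I would subtract this truncation from the full series. Since $e^{1/r} = \sum_{j \geq 0} (1/r)^j / j!$, the difference
\begin{equation}
R := e^{1/r} r^n n! - \gs{n}{r} = r^n n! \sum_{j=n+1}^\infty \frac{(1/r)^j}{j!} = \sum_{i=1}^\infty \frac{n!}{r^i (n+i)!}
\end{equation}
is the scaled tail, after the substitution $i = j-n$. Because $r$ and $n$ are positive, every summand is positive, so $R > 0$.

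The heart of the argument is the upper bound $R < 1$. I would bound $n!/(n+i)! = 1/\big((n+1)(n+2)\cdots(n+i)\big) \leq (n+1)^{-i}$, with equality only when $i=1$ and strict inequality for $i \geq 2$. Summing the resulting geometric series gives
\begin{equation}
R < \sum_{i=1}^\infty \frac{1}{\big(r(n+1)\big)^i} = \frac{1}{r(n+1)-1} \leq 1,
\end{equation}
where the final inequality uses $r(n+1) \geq 2$ for positive integers $r$ and $n$. The one delicate point — and the only place the estimate is tight — is the boundary case $r=n=1$, where the geometric bound equals $1$ exactly; here the strictness contributed by the $i \geq 2$ terms is precisely what upgrades $R \leq 1$ to $R < 1$.

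Finally, since $\gs{n}{r}$ is an integer and $0 < R < 1$, we have $e^{1/r} r^n n! = \gs{n}{r} + R$ with fractional part strictly between $0$ and $1$, whence $\floor{e^{1/r} r^n n!} = \gs{n}{r}$, as claimed. The main obstacle is therefore nothing more than securing the strict inequality $R < 1$ uniformly in $n$ and $r$; everything else is routine bookkeeping with the exponential series.
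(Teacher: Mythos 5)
Your proof is correct and follows essentially the same route as the paper: both write $\gs{n}{r}=r^n n!\sum_{j=0}^n (1/r)^j/j!$, identify the remainder $e^{1/r}r^n n!-\gs{n}{r}=\sum_{i\geq 1} n!\,r^{-i}/(n+i)!$, and show it lies strictly between $0$ and $1$. The only (cosmetic) difference is in the final estimate: you bound the tail by the geometric series $\sum_{i\geq 1}\bigl(r(n+1)\bigr)^{-i}\leq 1$, with strictness supplied by the $i\geq 2$ terms, whereas the paper notes each summand is decreasing in $r$ and $n$ and reduces to the worst case $R_{1,1}=e-2<1$.
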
 % }}}

\begin{proof} % {{{
%Recall that an $r$-cyclic arrangement of degree $n$ 
%is a subset $A \subset [n]$ 
%and an element of $C_{r} \wr S_{|A|}$.
%Thus
%$$\gs{n}{r} = \sum_{k = 0}^n \binom{n}{k} r^k k! .$$
%
Observe that 
\begin{align}\label{eqn:ean} % {{{
    e^{\tfrac 1{r}}  r^n n!
    &= 
    r^n n!\, 
    \Big(1 + r^{-1} + \frac{r^{-2}}{2!} + 
    \frac{r^{-3}}{3!} + \cdots \Big)\\
    &=r^n n! + \nonumber
    r^{n-1} (n-1)! \binom{n}{1} + 
    r^{n-2}(n-2)!\binom{n}{2} +
    \cdots +
    r \binom{n}{n-1} + 1 \\
    &\quad\quad  \nonumber
    + \frac{r^{-1}}{n+1}
    + \frac{r^{-2}}{(n+1)(n+2)}
    + \cdots .%\frac{a^{-1}}{n+1}
\end{align} % }}}
Let $R_{r,n}$ denote the `remainder term'
\begin{equation}
\label{eqn:remainder}
R_{r,n} = \sum_{k\geq 1} \frac{n!}{(n+k)!} r^{-k} 
= \frac{r^{-1}}{n+1}
    + \frac{r^{-2}}{(n+1)(n+2)} + \cdots. 
\end{equation}
We have 
$e^{1/r}r^n n! = \gs{n}{r} + R_{r,n}$.
Each summand of $R_{r,n}$
is a decreasing function of $r$ and a decreasing function of $n$.
Since $r,n\geq 1$, we have
$R_{r,n}\leq R_{1,1} =  e - 2< 1.$
It follows that  
\begin{align*}
    \floor{e^{\tfrac 1{r}}  r^n n!} 
    &= 
        \floor{\gs{n}{r} + R_{r,n}}
        = \gs{n}{r}.\qedhere
\end{align*}
\end{proof} % }}}

\begin{proposition} % {{{
    \label{prop:hallsequencesnegative}
Let $r$ be a negative integer and let
$
\gs{n}{r}$
denote the quantity
$ (-1)^n (\text{number of $|r|$-cyclic derangements of degree $n$}).
$ 
Then 
\begin{equation}
    \gs{n}{r} = 
    \floor{e^{\tfrac 1{r}}r^n n!+\tfrac12} \quad (n \in \NNpos). 
\end{equation}
\end{proposition}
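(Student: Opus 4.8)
The plan is to reuse the exact decomposition already established in the positive case and then exploit that, for negative $r$, the remainder becomes an alternating series. As in the derivation of \eqref{eqn:ean}--\eqref{eqn:remainder}, expanding $e^{1/r}$ termwise yields the identity $e^{1/r}r^n n! = \gs{n}{r} + R_{r,n}$ with $R_{r,n} = \sum_{k\geq 1}\frac{n!}{(n+k)!}r^{-k}$; this manipulation is purely formal and is insensitive to the sign of $r$. Since $\gs{n}{r}$ is an integer and $\floor{x+\tfrac12}$ equals the nearest integer to $x$ whenever $x$ is not a half-integer, it suffices to prove the strict bound $|R_{r,n}| < \tfrac12$.

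Writing $r = -s$ with $s = |r| \in \NNpos$, we have $r^{-k} = (-1)^k s^{-k}$, so the remainder becomes the alternating sum $R_{r,n} = \sum_{k\geq 1}(-1)^k a_k$ with $a_k = \frac{n!}{(n+k)!}s^{-k} > 0$. The next step is to verify that the $a_k$ are strictly decreasing: the ratio $a_{k+1}/a_k = \frac{1}{(n+k+1)s}$ is strictly less than $1$ for all $k\geq 1$ because $n,s\geq 1$. The standard alternating-series estimate then gives $|R_{r,n}| < a_1 = \frac{1}{(n+1)s}$.

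Finally, since $n\geq 1$ and $s = |r|\geq 1$, we have $a_1 = \frac{1}{(n+1)|r|} \leq \tfrac12$, whence $|R_{r,n}| < \tfrac12$. Therefore $R_{r,n} + \tfrac12 \in (0,1)$ and $\floor{e^{1/r}r^n n! + \tfrac12} = \floor{\gs{n}{r} + (R_{r,n}+\tfrac12)} = \gs{n}{r}$, which is the claim.

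The only delicate point, and the main thing to get right, is that the inequality must be strict ($|R_{r,n}| < \tfrac12$ rather than $\leq$), since the boundary case $n = |r| = 1$ gives $a_1 = \tfrac12$ exactly. Strictness is guaranteed because the alternating-series bound is itself strict once the terms $a_k$ are strictly decreasing; rather than invoke this as a black box, I would record the elementary inequality $\bigl|\sum_{k\geq 1}(-1)^k a_k\bigr| < a_1$ for strictly decreasing positive $a_k$ (e.g.\ by the grouping $-a_1 + (a_2 - a_3) + \cdots$, which shows the sum lies strictly between $-a_1$ and $-a_1 + a_2$).
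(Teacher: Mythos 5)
Your proof is correct, and it takes a genuinely different (and leaner) route to the key estimate than the paper does. Both arguments start from the same decomposition $e^{1/r}r^n n! = \gs{n}{r} + R_{r,n}$ with $R_{r,n}$ as in \eqref{eqn:remainder}, and both reduce the claim to showing $R_{r,n} \in (-\tfrac12, 0)$. The paper gets there by grouping the terms of $-R_{r,n}$ in consecutive pairs to exhibit it as a sum of positive summands, then showing each summand is decreasing in $n$ via a log-derivative computation, which reduces everything to the single explicit evaluation $|R_{r,1}| = 1 + r - re^{1/r} \in (0,\tfrac12)$. You instead observe that $R_{r,n} = \sum_{k\geq 1}(-1)^k a_k$ is an alternating series with strictly decreasing positive terms (ratio $a_{k+1}/a_k = \tfrac{1}{(n+k+1)|r|} < 1$), so the strict alternating-series bound gives $|R_{r,n}| < a_1 = \tfrac{1}{(n+1)|r|} \leq \tfrac12$ at once, with no calculus and no explicit evaluation of a boundary case. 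You were right to flag strictness as the delicate point --- the case $n=|r|=1$ has $a_1 = \tfrac12$ exactly, so a non-strict bound would not suffice --- and your grouping argument $-a_1 < \sum_{k\geq 1}(-1)^k a_k < -a_1 + a_2 < 0$ correctly delivers the strict inequality (the series converges absolutely since $a_k \leq (n+1)^{-k}|r|^{-k}$, so the regrouping is legitimate). The paper's monotonicity-in-$n$ argument yields slightly more information (that $|R_{r,n}|$ decreases in $n$), but that extra information is not needed for the proposition; your version is the shorter path to the same conclusion.
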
 % }}}

\begin{proof} % {{{
    %Recall that an $|r|$-cyclic derangement of $C_{|r|} \times [n]$
    %is an element of $C_{|r|} \wr S_n$ whose action on $C_{|r|}\times [n]$
%has no fixed point.
%By using inclusion-exclusion 
%we have
%$$
%\gs{n}{r} = (-1)^n \left( \sum_{k=0}^n (-1)^{n-k} \binom{n}{k} (-r)^k k! \right)
%= \sum_{k=0}^n \binom{n}{k} r^k k! .
%$$
Continue letting $R_{r,n}$ be as in \eqref{eqn:remainder} 
so that $e^{1/r} r^n n! = \gs{n}{r} + R_{r,n}$.
For all positive integers $n$ we have 
\begin{align}
\label{eq:summand}
- R_{r,n} &= \frac{|r|^{-1}}{n+1} - \frac{|r|^{-2}}{(n+1)(n+2)} + \frac{|r|^{-3}}{(n+1)(n+2)(n+3)} - \cdots \nonumber \\
&= 
 \sum_{\ell = 1}^\infty
\frac{|r|^{-2\ell+1}}{(n+1)(n+2) \cdots (n+2\ell)}\Big(
    (n + 2\ell) - {|r|^{-1}}
\Big) . 
%&= 
%\sum_{\ell = 1}^\infty
%\varepsilon^{2\ell-1}
%\Big(
%    1 - \frac{\varepsilon}{n+2\ell}
%\Big)
%\prod_{k = 1}^{2\ell - 1}
%    (n+k)^{-1}.
\end{align}
Clearly each summand of \eqref{eq:summand} is positive, so $R_{r,n}$ is negative. 
We will show that $|R_{r,n}|$ is a decreasing function of $n$
by considering each summand individually. 

Regarding the summand of \eqref{eq:summand} as a function of 
a continuous variable $n$, 
the expression has log-derivative 
\begin{multline}
\label{eqn:logderivative} 
\frac{d}{dn}\Big(
    \log(|r|^{-2\ell+1})
    +
    \log(n +2\ell  - |r|^{-1}) 
    - \sum_{k=1}^{2\ell} \log(n+k) 
\Big) \\ 
= \frac1{n+2\ell - |r|^{-1}} - \sum_{k=1}^{2\ell} \frac1{n+k} .
% \leq \frac{1}{n+2\ell - 1} - \frac1{n+1} - \cdots - \frac{1}{n + 2\ell} 
%< 0 
\end{multline}
%Now observe that 
%\begin{align*}
%    \frac{n+2\ell - |r|^{-1}}{n+1} + 
%    \frac{n+2\ell - |r|^{-1}}{n+2} + \cdots +
%    \frac{n+2\ell - |r|^{-1}}{n+2\ell} 
%    &> 
%    2\ell
%    \left(\frac{n+2\ell - |r|^{-1}}{n+2\ell}\right)\\
%    &\geq 
%    2\ell (\tfrac12)\\
%    &\geq 1. 
%\end{align*}
Since $|r| \geq 1$, we have the bounds
%This shows that 
\begin{equation} % {{{
\frac1{n+2\ell - |r|^{-1}} \leq \frac1{n + 2\ell - 1} < 
\sum_{k=1}^{2\ell} \frac1{n+k} , 
\end{equation} % }}}
which imply that \eqref{eqn:logderivative} is negative. 
Therefore each (positive) summand in 
\eqref{eq:summand} is   decreasing as a function of $n$.

This shows 
%that $- R_{a,n}$ is positive and is 
%a decreasing function of $n$; 
in particular that
$|R_{r,n}| \leq |R_{r,1}|$. 
We have 
$$
|R_{r,1}| = 1+r - r e^{\tfrac{1}{r}}
\in
(0,\tfrac12)
\quad\text{for } r \leq -1. 
$$
%where $\alpha = 1/e < 0.368$.
%With the help of \eqref{eqn:icumahler}\
Thus we have 
$
e^{\tfrac 1{r}}  r^n n! = 
\gs{n}{r} + R_{r,n}
$ 
with  
$R_{r,n} \in 
(-\tfrac12 , 0). $
This shows that for all $n \geq 1$,
\begin{equation} % {{{
    \floor{e^{\tfrac 1{r}}  r^n n! + \tfrac{1}{2}}
    = \floor{\gs{n}{r} + R_{r,n} + \tfrac12}
    = \gs{n}{r}.
\tag*{\qedhere} 
\end{equation} % }}}
\end{proof} % }}}

% }}}

% }}}

\section{The \texorpdfstring{$p$}{p}-adic incomplete gamma function}\label{sec:picg} % {{{

In 1975, Morita \cite{morita-1975} discovered 
a $p$-adic counterpart to the gamma function. 
In this section, we show that 
the incomplete gamma function also 
has a $p$-adic counterpart.
%Our proof will make use of Theorem~\ref{thm:gsinterpolates}. 
%The rest of the paper is independent from 
%the results of this section. 

\subsection{The classical incomplete gamma function} % {{{

We recall the classical construction of 
the incomplete gamma function. 
For a complex number $s$ 
let $\omega_s$ denote 
the differential 
given by 
\begin{equation} % {{{
    \omega_s(t) := t^s e^{-t} \frac{dt}{t} 
\end{equation} % }}}
where we take the principal value for 
$t^s= \exp(s \log t)$.
%If $s$ is not an integer, this requires making a branch cut 
%in $\CC$, % to obtain a single-valued differential $\omega_s$.
%which we choose to be $(-\infty,0]$.
We choose the branch cut $(-\infty,0]$
%let $U$ denote the complex plane with the negative real axis removed.
so that $\omega_s$ is holomorphic and single-valued on the domain
$
U = \CC \setminus (-\infty,0] . %= (\text{the negative real axis}).
$ 
%If $s$ is not an integer, then $\omega_s$ is not continuous for $t$ on the negative real axis.
For any oriented half-infinite curve $\gamma$ in $U$ 
%starting at $z$ and
tending towards positive infinity, 
the integral 
$ \int_\gamma \omega_s $
converges and defines a holomorphic function of $s$. 
%However, the value of this integral depends on 
%the homotopy class of the curve $\gamma$. 
%To ensure that the integral only depends on 
%the initial point $z$ of $\gamma$, 
%we will require that 
%the curve $\gamma$ is disjoint from 
%the negative real axis, except possibly at its initial point. 
%If the initial point of $\gamma$ is on the negative real axis, 
%we also require that $\gamma$ initially moves in 
%the positive imaginary direction. 
%With these requirements on the path of integration, 
We thus obtain, for any complex number $z\in U$, 
the entire function of $s$ given by  
\begin{equation}\label{eqn:incg-classical} % {{{
    \Gamma(s,z):= \int_\gamma \omega_s 
\end{equation} % }}}
where $\gamma$ is any curve in $U$ beginning at $z$ 
and tending to positive infinity. 
%(When $z$ is an integer,...)
This defines the \defn{incomplete gamma function}
$\Gamma\colon \CC \times U \to \CC$. 
By Hartog's theorem on separate analyticity, 
$\Gamma(s,z)$ is 
an analytic function of two variables. 

%The incomplete gamma function also admits the series expression 
%\cite[Eq.~2.6.3]{beals-wong}
%\begin{equation}\label{eqn:icgseriesexpression} % {{{
%    \Gamma(s,z) = 
%    \Gamma(s) - z^s \sum_{n=0}^\infty \frac{(-z)^n}{n!(n+s)},
%    \quad
%    z \in U,\,\,
%    s \not \in \{0,-1, -2, \ldots\}.
%\end{equation} % }}}
The incomplete gamma function recovers 
the gamma function in the limit as $z$ goes to zero: 
$$\lim_{z \to 0^+} \Gamma(s,z) = \Gamma(s). 
%= \int_0^\infty \omega_s, 
%\quad 
%\Re(s) > 0
$$ 

In general, the incomplete gamma function 
takes different values as $z$ approaches 
the branch cut. %from the positive- and negative-imaginary directions. 
For $s \in \CC$ and $z$ on 
the negative real axis we have 
\begin{equation}\label{eqn:branchcutjump}
\Gamma(s, z + i 0^+) = 
e^{2\pi i s} \Gamma(s, z - i0^+) + (1 - e^{2\pi i s})\Gamma(s) 
%\Gamma(s, z + i 0^+) - \Gamma(s, z - i0^+) = 
%(1 - e^{2\pi i s})(\Gamma(s) - \Gamma(s,z - i 0^+))
\end{equation}
%for $z$ on the negative real axis,
where %we use the notation 
$\displaystyle f(0^+) \coloneqq 
\lim_{\epsilon \to 0^+} f(\epsilon)$; 
see %\cite[Eq.~2.6.7]{beals-wong}.
\cite[Eq. 8.2.9]{nist}.
If $s$ is a positive integer 
this shows $\Gamma(s,z)$ 
extends continuously in the second variable 
across the branch cut,  
and $\Gamma$ extends to the domain 
$\Gamma\colon (\CC \times U) \cup (\NNpos \times \CC) \to \CC$. 
%Alternatively, one notes that the differential $\omega_s$ is 
%holomorphic and single-valued on $\CC$, 
%so the incomplete gamma function extends to the domain
%$\Gamma\colon \NNpos \times \CC \to \CC$
%using the same definition \eqref{eqn:incg-classical} 
%(or \eqref{eqn:icgseriesexpression}).

The incomplete gamma function 
is closely related to the combinatorial function $\gs{n}{r}$ 
counting cyclic derangements and arrangements.
Just as the count of all permutations 
is interpolated by the gamma function, 
the counts of cyclic derangements and cyclic arrangements 
are interpolated by the incomplete gamma function 
(up to an explicit entire function).  
Recall from \S\ref{sec:three} that
we extended the combinatorial definition of $\gs{n}{r}$ to 
$\gs{n}{r} = 
\sum_{k=0}^{n} 
    \binom{n}{k} k! {r}^{k}$
for all $r \in \CC_p$. 

\begin{proposition}\label{prop:icggs} % {{{
    Let $z \in \CC^\times$ be fixed. 
    The function 
    $s \mapsto \icg[1+s, z] 
        z^{-s}
        e^{z}$ 
    is holomorphic for all $s \in \CC$. 
For $n \in \NN$ we have 
    $\icg[1+n, z] 
        z^{-n}
        e^{z} = \gs{n}{1/z}.$ 
\end{proposition}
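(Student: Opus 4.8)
The plan is to treat the two assertions separately, since the holomorphicity is essentially built into the construction while the integer identity reduces to a standard evaluation of $\icg$ at integer first argument. For fixed $z \in U = \CC \setminus (-\infty,0]$ the integral $\icg[1+s,z] = \int_\gamma \omega_{1+s}$ defines an entire function of $s$ (as recorded in the definition \eqref{eqn:incg-classical}), while $z^{-s} = \exp(-s\log z)$ and the constant $e^{z}$ are entire in $s$; hence their product is entire. For $z$ on the negative real axis I would fix the boundary value $\icg[1+s,z+i0^+]$ and invoke the jump relation \eqref{eqn:branchcutjump} with first argument $1+s$: the only possible poles come from $\Gamma(1+s)$ at $s \in \{-1,-2,\ldots\}$, and these are cancelled by the simple zeros of the factor $1 - e^{2\pi i(1+s)} = 1 - e^{2\pi i s}$ at exactly those points, so the boundary value is again entire in $s$. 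This branch bookkeeping on the negative axis is the only delicate point in the whole argument.

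For the integer identity I would first establish the closed form via the recurrence coming from a single integration by parts,
\[ \icg[1+s,z] = s\,\icg[s,z] + z^{s} e^{-z}, \]
together with the base case $\icg[1,z] = \int_\gamma e^{-t}\,dt = e^{-z}$. Induction on $n$ then yields
\[ \icg[1+n,z] = n!\, e^{-z} \sum_{k=0}^{n} \frac{z^{k}}{k!} \qquad (n \in \NN). \]

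It remains to multiply through by $z^{-n} e^{z}$ and reindex. The factor $e^{-z}$ cancels, and substituting $j = n-k$ converts the truncated exponential into
\[ \icg[1+n,z]\, z^{-n} e^{z} = n! \sum_{j=0}^{n} \frac{z^{-j}}{(n-j)!} = \sum_{j=0}^{n} \binom{n}{j} j!\, (1/z)^{j}, \]
where I have used $n!/(n-j)! = \binom{n}{j}\, j!$. By the definition \eqref{eqn:icumahler}, the right-hand side is exactly $\gs{n}{1/z}$, which proves the identity for all $n \in \NN$. The substantive content is thus confined to the branch-cut cancellation of the first paragraph; the evaluation and the reindexing are routine.
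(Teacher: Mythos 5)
Your proof is correct, and it reaches the closed form $\icg[1+n,z] = n!\,e^{-z}\sum_{k=0}^{n} z^k/k!$ by a genuinely different route than the paper. The paper quotes the series identity $\icg[s,z] = \Gamma(s)\bigl(1 - z^s e^{-z}\sum_{k\geq 0} z^k/\Gamma(s+k+1)\bigr)$ (NIST Eq.~8.7.3), specializes to $s=1+n$, and telescopes the tail of $e^z$; you instead derive the same closed form self-containedly from the integration-by-parts recurrence $\icg[1+s,z]=s\,\icg[s,z]+z^s e^{-z}$ with base case $\icg[1,z]=e^{-z}$ and induction on $n$. Your version has the advantage of not importing an external series identity, at the small cost of needing the boundary term $[-t^s e^{-t}]$ to vanish at infinity along the contour, which is immediate. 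The final reindexing $n!\sum_{j=0}^n z^{-j}/(n-j)! = \sum_j \binom{n}{j} j!\,(1/z)^j = \gs{n}{1/z}$ is identical in both arguments. For the holomorphy claim you and the paper argue the same way ($\icg[1+s,z]$ and $z^{-s}=\exp(-s\log z)$ are entire in $s$); your extra paragraph on the branch cut, cancelling the poles of $\Gamma(1+s)$ against the zeros of $1-e^{2\pi i s}$ in \eqref{eqn:branchcutjump}, goes beyond what the paper writes down (the paper implicitly restricts the first assertion to $z\in U$ and only invokes the cut for integer first argument) and is a legitimate, careful addition.
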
 % }}}

($\Gamma(1+n,z)$ is defined 
for $(n,z) \in \NN \times \CC$ by 
\eqref{eqn:branchcutjump}.) 

\begin{proof} % {{{
The function $z^{-s} = \exp({-s \log z})$ is entire
in the argument $s$,  
where we take the principal value for the logarithm 
    to determine the constant $\log z$,  
and $\icg[1+s, z]$ is entire in $s$. 
    %\cite[\S8.21(i)]{nist}. 
This proves the first part of the proposition.    
    
By \cite[Eq.~8.7.3]{nist}, 
    for any $(s,z) \in \NNpos \times \CC$ 
    %with $s \not \in \{0,-1,-2,\ldots\}$ 
    we have 
\begin{equation} % {{{
    \icg[s,z] = 
    \Gamma(s)
    \Bigg(
    1 - z^se^{-z}
        \sum_{k=0}^\infty
            \frac{z^k}{\Gamma(s+k+1)}
    \Bigg).
\end{equation} % }}}
%In this formula, the principal branch 
%    for both $\Gamma(s,z)$ and the logarithm 
%    (implicit in $z^s$) is implied. 
    Using that $\Gamma(s+k+1)\Gamma(s)^{-1} = (s+k)\cdots(s+1)s$ 
    and writing $s = 1+n$ we have 
\begin{align} % {{{
    \icg[1+n,z] 
    &= 
    n! 
    \Bigg(
    1 - z^{1+n}e^{-z}
        \sum_{k=0}^\infty
            \frac{z^{k}}{(n+k+1)!}
    \Bigg)\\
    &= 
    n! 
    \Bigg(
    1 - e^{-z}
        \sum_{k=n+1}^\infty
            \frac{z^{k}}{k!}
    \Bigg)\\
    &= 
    n! 
    e^{-z}
    \Bigg(
    e^{z} - \sum_{k=n+1}^\infty 
                \frac{z^{k}}{k!}
    \Bigg)\\
    &=
    n! 
    e^{-z}
    \sum_{k=0}^n  
        \frac{z^{k}}{k!} \\
    &={e}^{-z} 
    \sum_{k=0}^{n} 
        \frac{ n!}{(n-k)!}z^{n-k}
    =
    \gs{n}{1/z}
    z^{n}
    e^{-z}. 
\end{align} % }}}
Rearranging terms proves the proposition. 
\end{proof} % }}}

%\begin{remark}
%The proposition shows that 
%    for any $r \in \CC^\times$ 
%    and positive integer $n$, 
%    \begin{equation}\label{eqn:icgpolyexpression}
%    \icg[n+1, 1/r] 
%        = e^{-1/r} \sum_{k=0}^n \frac{n!}{(n-k)!} r^{k-n} 
%    =\gs{n}{r}r^{-n}e^{-1/r} .
%    \end{equation}
%Thus the incomplete gamma function essentially 
%    interpolates the counts of cyclic arrangements and cyclic derangements 
%    (up to the factor $r^{-n}e^{-1/r}$), 
%    similarly to how the gamma function interpolates 
%    the count of all permutations in $S_n$. 
%\end{remark}

\begin{corollary}\label{cor:Qezfinitedata}
For any $z \in \CC^\times$, $\QQ(\{\icg[n, z] : n \in \NN^{>0}\}) = \QQ(z,e^z)$.
\end{corollary}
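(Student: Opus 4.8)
The plan is to reduce the statement to the explicit formula of Proposition~\ref{prop:icggs}. Write $K := \QQ(\{\icg[n,z] : n \in \NNpos\})$ for the field in question. First I would record that for $n \in \NN$ the proposition gives $\icg[n+1,z] = \gs{n}{1/z}\,z^n e^{-z}$, and that by formula~\eqref{eqn:icumahler} we have $\gs{n}{1/z} = \sum_{k=0}^n \binom{n}{k}k!\,z^{-k}$. Multiplying through by $z^n$ shows that $P_n(z) := \gs{n}{1/z}\,z^n = \sum_{k=0}^n \binom{n}{k}k!\,z^{n-k}$ is a polynomial in $z$ with integer coefficients, so that $\icg[n+1,z] = P_n(z)\,e^{-z}$ for every $n \geq 0$.

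The inclusion $K \subseteq \QQ(z,e^z)$ is then immediate, since each $P_n(z)$ lies in $\ZZ[z] \subseteq \QQ(z)$ and $e^{-z} = 1/e^z \in \QQ(z,e^z)$. For the reverse inclusion I would extract $z$ and $e^z$ separately from the two smallest values. Because $P_0(z) = 1$ we have $\icg[1,z] = e^{-z}$, and since $e^{-z} \neq 0$ we may invert it to get $e^z = 1/\icg[1,z] \in K$. Because $P_1(z) = z+1$ we have $\icg[2,z] = (z+1)e^{-z}$, so the ratio $\icg[2,z]/\icg[1,z] = z+1$ lies in $K$, whence $z \in K$ as well. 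Together these give $\QQ(z,e^z) \subseteq K$, and combined with the first inclusion this proves $K = \QQ(z,e^z)$.

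There is no serious obstacle in this argument; the only point meriting attention is that the corollary asks for $z$ and $e^z$ individually, not merely for the products $P_n(z)e^{-z}$ that arise as values of the incomplete gamma function. This separation is achieved precisely by forming the ratio $\icg[2,z]/\icg[1,z]$, which cancels the transcendental factor $e^{-z}$ and leaves the linear polynomial $z+1$. The division step is legitimate because $e^{-z}$ never vanishes, so the hypothesis $z \in \CC^\times$ enters only to make $\QQ(z,e^z)$ a genuinely transcendental extension rather than a degenerate case.
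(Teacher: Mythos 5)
Your proposal is correct and is essentially identical to the paper's proof: both deduce the inclusion $\QQ(\{\icg[n,z]\}) \subseteq \QQ(z,e^z)$ from Proposition~\ref{prop:icggs}, and both recover $e^{-z} = \icg[1,z]$ and $z = \icg[2,z]/\icg[1,z] - 1$ for the reverse inclusion. No issues.
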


\begin{proof}
From Proposition~\ref{prop:icggs} it is easily seen that for any positive integer $n$, 
$\icg[n,z]$ is a polynomial function in $z$ and $e^z$. 
This shows that 
$\QQ(\{\icg[n, z] : n \in \NN^{>0}\}) \subset \QQ(z,e^z)$. 
Moreover, %we also find that 
$e^{-z}=\icg[1,z]$ and 
    $z = \tfrac{\icg[2,z]}{\icg[1,z]} - 1$,
proving that 
$\QQ(z,e^z) \subset \QQ(\{\icg[n, z] : n \in \NN^{>0}\})$. 
\end{proof}

% }}}

\subsection{The \texorpdfstring{$p$}{p}-adic incomplete gamma function} % {{{

We will interpolate the incomplete gamma function 
on its integral values $(n,r) \in \NNpos\times \ZZ$. 
Before doing so, it is necessary to realize 
the integral values of the incomplete gamma function 
(which are transcendental elements of $\CC$) 
as $p$-adic numbers. 
As we shall see, there is 
a reasonably natural way of doing this. 
The field generated by the integral values of the incomplete gamma function is 
$$\QQ(\{\Gamma(n,r) : (n,r) \in \NNpos\times\ZZ\}) = \QQ(e)$$ 
by Corollary~\ref{cor:Qezfinitedata}. 
Therefore, to obtain a homomorphism from the field generated by 
$\{\Gamma(n,r) : (n,r) \in \NNpos\times\ZZ \}$ 
to the field of $p$-adic numbers, 
it suffices to specify a transcendental element of $\QQ_p$ corresponding to $e$. 
Let 
$\tau_p \colon \QQ(e) %\QQ(\{\Gamma(n,r) : (n,r) \in \NNpos\times\ZZ\}) 
\to \QQ_p$ 
be the unique field homomorphism satisfying  
\begin{equation} % {{{
    \tau_p(e^{-1}) = \exp_p(p) = 1+ p + p^2/2!+p^3/3!+\cdots. 
\end{equation} % }}}
The element $\exp_p(p) \in \QQ_p$ is transcendental over $\QQ$ 
(cf.~\cite[\S3]{adams1966transcendental}). 
%We now proceed to prove the existence of 
%a $p$-adic analogue of the incomplete gamma function.

\begin{theorem}[Theorem~\ref{thm:introincompletegamma}]
There exists a continuous function 
    $$\Gamma_{p}\colon\ZZ_p \times (1+p\ZZ_p)\to \ZZ_p$$ 
    satisfying 
$$\Gamma_{p}(n,r) = 
    \tau_p\Gamma(n,r)
    %\gs{n-1}{1/r}r^{n-1}e^p
    $$ 
for any positive integers $n$ and $r$ satisfying $r \equiv 1 \Mod p$. 
\end{theorem}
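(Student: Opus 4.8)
The plan is to reduce the problem to interpolating the combinatorial function $\gs{\,\cdot\,}{\,\cdot\,}$, whose $p$-adic continuity has already been established in Theorem~\ref{thm:gsjointcontinuity}. First I would unwind the integral values of the incomplete gamma function. By Proposition~\ref{prop:icggs}, for positive integers $n$ and $r$ we have $\icg[n,r] = \gs{n-1}{1/r}\,r^{n-1}e^{-r}$. Applying the field homomorphism $\tau_p$, which is the identity on $\QQ$ and sends $e^{-1}$ to $\exp_p(p)$, and using that $\exp_p(p)^r = \exp_p(pr)$, this yields
\begin{equation}
\tau_p\icg[n,r] = \exp_p(pr)\,r^{n-1}\,\gs{n-1}{1/r},
\end{equation}
where $r^{n-1}$ and $\gs{n-1}{1/r}$ are now read as elements of $\QQ_p$. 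This identifies the target values as a product of three quantities, each of which I would interpolate separately.

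Next I would define the candidate function. For $s \in \ZZ_p$ and $r \in 1+p\ZZ_p$, set
\begin{equation}
\Gamma_p(s,r) := \exp_p(pr)\,r^{s-1}\,\gsp{s-1}{1/r},
\end{equation}
where $r^{s-1} := \exp_p\big((s-1)\log_p r\big)$ and $\gsp{\,\cdot\,}{\,\cdot\,}$ is the metric map of Theorem~\ref{thm:gsjointcontinuity}. The restriction to $r \in 1+p\ZZ_p$ is precisely what makes $r^{s-1}$ meaningful: for a principal unit $r$ one has $\log_p r \in p\ZZ_p$, so $r^w = \exp_p(w\log_p r)$ converges for every $w \in \ZZ_p$ and varies continuously in both $w$ and $r$. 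Moreover $1/r \in 1+p\ZZ_p \subset \ZZ_p$ lies in the domain of the second variable of $\gsp{\,\cdot\,}{\,\cdot\,}$, and $r \mapsto 1/r$ is continuous on $1+p\ZZ_p$.

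I would then verify continuity and integrality factor by factor: $r \mapsto \exp_p(pr)$ is $p$-adic analytic on $\ZZ_p$ with values in $1+p\ZZ_p$; $(s,r) \mapsto r^{s-1}$ is continuous with values in $1+p\ZZ_p$ by the previous paragraph; and $(s,r) \mapsto \gsp{s-1}{1/r}$ is continuous into $\ZZ_p$ by composing the metric map of Theorem~\ref{thm:gsjointcontinuity} with the continuous maps $s \mapsto s-1$ and $r \mapsto 1/r$. Hence $\Gamma_p$ is continuous and lands in $\ZZ_p$. For the interpolation property, at $s = n$ a positive integer the power $r^{n-1}$ reduces to the ordinary one and $\gsp{n-1}{1/r} = \gs{n-1}{1/r}$, so $\Gamma_p(n,r)$ equals the right-hand side of the first display, namely $\tau_p\icg[n,r]$, whenever $r \equiv 1 \Mod p$.

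Finally, to recover the explicit series I would expand $\gsp{s-1}{1/r}$ using its Mahler expansion \eqref{eqn:alpha}, obtaining
\begin{equation}
r^{s-1}\gsp{s-1}{1/r} = r^{s-1}\sum_{k=0}^\infty (1/r)^k\,k!\binom{s-1}{k} = \sum_{k=0}^\infty r^{s-1-k}\,k!\binom{s-1}{k},
\end{equation}
which gives the stated formula. The main obstacle is not the continuity of $\gsp{\,\cdot\,}{\,\cdot\,}$ — this is handed to us by Theorem~\ref{thm:gsjointcontinuity} — but rather making sense of the exponent $r^{s-1}$ for nonintegral $s$ and confirming that its introduction is harmless: the rearrangement $r^{s-1-k} = r^{s-1}r^{-k}$ is legitimate because $r^{s-1}$ is a fixed principal unit, and the termwise bound $|r^{s-1-k}\,k!\binom{s-1}{k}|_p = |(s-1)^{\ul k}|_p \to 0$ guarantees the series converges. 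This is exactly where the hypothesis $r \in 1+p\ZZ_p$ is indispensable.
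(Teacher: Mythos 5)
Your proposal is correct and follows essentially the same route as the paper: both factor $\tau_p\Gamma(n,r)$ via Proposition~\ref{prop:icggs} into $\exp_p(pr)\cdot r^{n-1}\cdot \gs{n-1}{1/r}$ and interpolate each factor, using Theorem~\ref{thm:gsjointcontinuity} for the last and $\exp_p((s-1)\log_p r)$ for the power. The only cosmetic difference is that the paper invokes the abstract extension principle for uniformly continuous functions on a dense subset, while you write the extension down explicitly and verify agreement at integer points; the resulting formula is identical.
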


\begin{proof} % {{{
We have that 
    \begin{equation}\label{eqn:factorizationunifcont} % {{{
    \tau_p\Gamma(n,r)
    =
    \tau_p\big(\gs{n-1}{1/r}
    r^{n-1}
    e^{-r} \big)
    =
    \gs{n-1}{1/r}
    r^{n-1}
    \tau_p(e^{-r} ) 
\end{equation} % }}}
by Proposition~\ref{prop:icggs}. 
We will show that each of the three functions 
    $(n,r) \mapsto \gs{n-1}{1/r}$, $(n,r) \mapsto r^{n-1}$, and 
    $r \mapsto \tau_p(e^{-r})$ is interpolated by a uniformly continuous function
    in the $p$-adic topology. 

We have already proven  
    in Theorem~\ref{thm:gsjointcontinuity}
    that $(n,r) \mapsto \gs{n}{r}$ 
    extends to the metric map 
    $\ZZ_p \times \ZZ_p \to \ZZ_p$ 
    given by the Mahler series \eqref{eqn:alpha} 
    \begin{equation} % {{{
    \gs{y}{s} = \sum_{k=0}^\infty s^kk! \binom{y}{k} \,\,\,
    \text{for $(y,s)\in\ZZ_p \times \ZZ_p$}. 
    \end{equation} % }}}
This suffices to prove that 
    $(n,r) \mapsto \gs{n-1}{1/r}$ 
    is interpolated by a metric map, 
    hence interpolated by a uniformly continuous map. 

Let $\exp_p \colon \ZZ_p \to 1+p\ZZ_p$ (resp. $\log_p \colon 1+p\ZZ_p \to \ZZ_p$) denote 
the $p$-adic exponential (resp. $p$-adic logarithm). 
Then, for any positive integers $n$ and $r$ satisfying $r \equiv 1 \Mod p$,  
    we have that $r^n = \exp_p(n \log_p r)$. 
Therefore the function 
%$\ZZ_p \times \ZZ_p \to \CC_p\colon (x,y) \mapsto \exp_p(x y)$ 
%is an analytic function of two variables, and therefore 
$\ZZ_p \times (1+p\ZZ_p) \to \ZZ_p\colon (x,s) \mapsto \exp_p(x \log_p s)$ 
is a continuous extension of 
    $\NNpos \times (1+p\NN) \to \ZZ_p\colon (n,r) \mapsto r^n$ 
    which is also uniformly continuous since $\ZZ_p \times (1+p\ZZ_p)$ is compact. 

Finally,
    %since $|\exp_p(p)-1|_p < 1$ it follows from Example~\ref{example:expfunctions} that
    $\NNpos \to \ZZ_p \colon r \mapsto \tau_p(e^{-r}) = \exp_p(p)^r$ 
    is interpolated by the $p$-adic locally analytic function 
    $\ZZ_p \to \ZZ_p\colon r \mapsto \exp_p(pr)$ 
    %(cf. Example~\ref{example:expfunctions}), 
    which is also uniformly continuous since $\ZZ_p$ is compact. 

Any uniformly continuous function $f \colon U \to Y$  
mapping to a complete metric space $Y$, 
    and defined on a dense subset $U \subset X$ 
    of a topological space $X$, 
    extends uniquely to a uniformly continuous function $X \to Y$. 
We conclude that $\tau_p\Gamma(n,r)$ is interpolated by 
    the uniformly continuous function 
\begin{equation}\label{eqn:factorizationforpicg} % {{{
    \Gamma_p(y,r) = r^{y-1} \exp_p(pr) a(y-1,1/r) 
\end{equation} % }}}
for $(y,r) \in \ZZ_p \times(1+p\ZZ_p)$.
\end{proof} % }}}

%By using the results of \S\ref{sec:zerosasr} and \S\ref{sec:floorformulaanr}, 
%we immediately obtain the following theorem about the $p$-adic incomplete gamma function. 

\begin{remark}[$m$-values of $\Gamma_p$]
We can corroborate Theorem~\ref{thm:introincompletegamma} 
    using Theorem~\ref{thm:mkpinterpolation}. 
    %$n \mapsto \tau_p \Gamma(n,r)$ is continuous 
    %in its first variable for fixed $r \in 1 + p\NN$ 
For fixed $r \in \ZZ$ 
the $m$-values of $n \mapsto \exp_p(-pr) \tau_p \Gamma(n,r)$ 
    are found to be 
\begin{equation} % {{{
m_k = 
\begin{cases}
    r+1 & \text{if $k = 1$,}\\
    1 & \text{if $k \geq 2$,} 
\end{cases}
\end{equation} % }}}
and the required congruence 
    \eqref{eqn:nvaluescongruence} 
    for continuity is that 
    $r \equiv 1 \Mod p$. 
\end{remark}

We collect a few facts about $\Gamma_p$. 
%the $p$-adic incomplete gamma function. 

\begin{theorem}
Let $\Gamma_{p}\colon\ZZ_p \times (1+p\ZZ_p)\to \ZZ_p$ denote 
    the $p$-adic incomplete gamma function 
    of Theorem~\ref{thm:introincompletegamma}. 
    \begin{enumerate}
        \item For any $y \in \ZZ_p$ and $r \in 1 + p\ZZ_p$ we have 
            the series expression 
    \begin{equation} % {{{
        \Gamma_p(y,r) = \sum_{k=0}^\infty r^{y-k-1}\exp_p(pr)  k! \binom{y-1}{k}. 
    \end{equation} % }}}
        \item For any $n\in \NN$ and $r \in \ZZ$ satisfying $r \equiv 1 \Mod p$, we have   
    \begin{equation} % {{{
        \Gamma_p(n+1,1/r) = 
        r^{-n}\exp_p(p/r)  \cdot
        \begin{cases}
            \floor{e^{1/r}r^nn!+\tfrac12} &\text{if $r<0$,} \\
            \floor{e^{1/r}r^nn!} &\text{if $r>0$.}
        \end{cases}
    \end{equation} % }}}
        \item For any $n\in \NN$ and $r \in 1+p\ZZ_p$, we have   
    \begin{equation} % {{{
        \Gamma_p(n+1,r) = 
        \exp_p(pr) n! f_n(r)
    \end{equation} % }}}
            where $f_n(X)$ is the $n$th \emph{truncated exponential polynomial}
    \begin{equation} % {{{
        f_n(X) := 1 + X + \tfrac{1}{2!}X^2 + \cdots + 
        \tfrac{1}{n!}X^n
        \quad(n \geq 1). 
    \end{equation} % }}}
    \end{enumerate}
\end{theorem}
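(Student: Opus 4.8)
The plan is to deduce all three identities from a single source: the factorization $\Gamma_p(y,r)=r^{y-1}\exp_p(pr)\,\gs{y-1}{1/r}$ recorded as \eqref{eqn:factorizationforpicg} in the proof of the preceding theorem, together with the Mahler expansion \eqref{eqn:alpha} of the two-variable metric map $(y,s)\mapsto\gs{y}{s}$ furnished by Theorem~\ref{thm:gsjointcontinuity}. For part~(1) I would substitute $\gs{y-1}{1/r}=\sum_{k\geq0}r^{-k}k!\binom{y-1}{k}$ directly into the factorization and move the fixed factor $r^{y-1}$ (well-defined as $\exp_p((y-1)\log_p r)$ for $r\in1+p\ZZ_p$) inside the sum, turning the $k$th term into $r^{y-1-k}\exp_p(pr)k!\binom{y-1}{k}$. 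Since the Mahler series converges $p$-adically this rearrangement is legitimate, and it reproduces the stated formula verbatim.

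For part~(3) I would take first argument $n+1$, so that the relevant value is $\gs{n}{1/r}$, and use that for the integer first argument $y-1=n$ the Mahler series truncates, as $\binom{n}{k}=0$ for $k>n$, to the finite sum $\sum_{k=0}^n\binom{n}{k}k!\,r^{-k}$ of \eqref{eqn:icumahler}. Writing $\binom{n}{k}k!=n!/(n-k)!$ and reindexing by $j=n-k$ collapses this to $n!\,r^{-n}\sum_{j=0}^n r^{j}/j!=n!\,r^{-n}f_n(r)$; multiplying by the prefactor $r^{n}\exp_p(pr)$ then cancels the powers of $r$ and yields $\exp_p(pr)\,n!\,f_n(r)$. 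This is a finite algebraic manipulation valid for any $p$-adic unit $r$, so no convergence issues arise.

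For part~(2) I would instead put first argument $n+1$ and second argument $1/r$, so the factorization gives $\Gamma_p(n+1,1/r)=r^{-n}\exp_p(p/r)\,\gs{n}{r}$. The new ingredient here is the archimedean floor formula: once one knows that the $p$-adically interpolated value $\gs{n}{r}$ at integral arguments coincides with the combinatorial quantity of \eqref{eqn:icumahler} (again by truncation of the Mahler series at $y=n$), Theorem~\ref{thm:floorfunctionformula} expresses $\gs{n}{r}$ by the two displayed floor expressions according to the sign of $r$, and substituting completes the proof. I expect the main points requiring care to be purely bookkeeping: verifying that the interpolated $\gs{n}{r}$ really equals the integer-valued combinatorial count before invoking an archimedean identity, and keeping track of the reciprocal in the second slot alongside the two sign cases. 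One further subtlety is the boundary value $n=0$, where $\gs{0}{r}=1$: since Theorem~\ref{thm:floorfunctionformula} is stated for positive $n$, this case should be handled separately rather than by direct appeal to the floor formula.
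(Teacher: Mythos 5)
Your proposal is correct and follows exactly the paper's route: all three parts are read off from the factorization $\Gamma_p(y,r)=r^{y-1}\exp_p(pr)\,\gs{y-1}{1/r}$ combined with, respectively, the Mahler series \eqref{eqn:alpha}, the floor formula of Theorem~\ref{thm:floorfunctionformula}, and the finite-sum identity $\gs{n}{r}=r^n n!\,f_n(1/r)$. Your remark about $n=0$ in part~(2) is a fair catch that the paper glosses over (Theorem~\ref{thm:floorfunctionformula} is only stated for $n\in\NNpos$, and indeed the floor expression need not equal $\gs{0}{r}=1$ for all admissible $r$), but this does not affect the substance of the argument.
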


\begin{proof}
The first formula follows at once from 
    \eqref{eqn:factorizationforpicg} and \eqref{eqn:alpha}. 
The second formula follows from \eqref{eqn:factorizationforpicg} and 
    \eqref{eqn:explicitalgu}. 
The third formula follows from \eqref{eqn:factorizationforpicg} and 
    the formula 
    \begin{equation}%\label{eqn:schurwithgs} % {{{
    \gs{n}{r} = 
    \sum_{k=0}^{n} 
        \frac{n!}{(n-k)!} {r}^{k} 
        = 
        r^{n} 
        n!
    \sum_{k=0}^{n} 
        \frac{1}{k!} {r}^{-k} 
        =
        r^{n} 
        n!
        f_n(1/r).\tag*{\qedhere} 
    \end{equation} % }}}
\end{proof}

Using a classical result of Schur 
on the irreducibility of 
truncated exponential polynomials, 
we can make the following observation.  

\begin{theorem}
$\Gamma_p$ has finitely many zeros in $\ZZ_p \times (1+p\ZZ_p)$. 
\end{theorem}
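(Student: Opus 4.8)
The plan is to first strip away the non-vanishing factors and then study the zero locus as a rigid-analytic subset of the compact polydisc $\ZZ_p \times (1+p\ZZ_p)$. By \eqref{eqn:factorizationforpicg} we have $\Gamma_p(y,r) = r^{y-1}\exp_p(pr)\,\gs{y-1}{1/r}$, and both $r^{y-1}=\exp_p((y-1)\log_p r)$ and $\exp_p(pr)$ are units in $\ZZ_p$ for every $(y,r)$ in the domain. Hence $\Gamma_p(y,r)=0$ if and only if $\gs{y-1}{1/r}=0$, and it suffices to bound the zeros of $g(y,r):=\gs{y-1}{1/r}=\sum_{k\geq 0} r^{-k}(y-1)^{\ul k}$. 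For fixed $y$ this is a norm-convergent sum of the analytic functions $r\mapsto r^{-k}$, so $g$ is analytic in $r$ on $1+p\ZZ_p$, and for fixed $r$ it is locally analytic in $y$ by Theorem~\ref{thm:gsinterpolates} (since $|1/r|_p=1<p^{1/(p-1)}$). Thus $g$ is rigid-analytic on each of the $p$ residue discs of the affinoid $\ZZ_p\times(1+p\ZZ_p)$, its zero set $Z$ is a compact rigid-analytic subvariety, and the whole problem reduces to showing that $Z$ is $0$-dimensional, i.e.\ that $Z$ contains no $1$-dimensional component.

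As a concrete first reduction I would localize in the $y$-direction modulo $p$. Since $(y-1)^{\ul k}=(y-1)(y-2)\cdots(y-k)$ contains a factor divisible by $p$ once $k\geq p$, and $r\equiv 1\Mod p$, the series reduces to
\begin{equation*}
 g(y,r)\equiv \sum_{k=0}^{p-1}(y-1)^{\ul k}\Mod{p\ZZ_p}.
\end{equation*}
The right-hand side is the reduction of a fixed monic polynomial $\overline g\in\FF_p[y]$ of degree $p-1$. Any zero $(y,r)\in Z$ forces $\overline g$ to vanish at the residue of $y$, so the first coordinates of zeros are confined to the at most $p-1$ residue discs cut out by the roots of $\overline g$. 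This shows the zeros are not Zariski-dense in the $y$-line and reduces matters to proving, for each such residue disc $D$, that $g$ has finitely many zeros over $D$.

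The heart of the matter is to exclude a $1$-dimensional component of $Z$ lying over such a disc $D$. Suppose one exists; after shrinking $D$ and possibly passing to a finite cover, parametrize it as $r=\phi(y)$ with $\phi$ analytic on $D$, $\phi(y)\in 1+p\ZZ_p$, so that $\gs{y-1}{1/\phi(y)}\equiv 0$ on $D$. Evaluating at the infinitely many integers $y=n+1\in D$ and using the identity $\Gamma_p(n+1,r)=\exp_p(pr)\,n!\,f_n(r)$ from the preceding theorem, we find that $\phi(n+1)$ is a root of the truncated exponential polynomial $f_n$ lying in $1+p\ZZ_p$. Here I would invoke Schur's theorem: each $f_n$ is irreducible over $\QQ$, hence separable with simple roots, the $f_n$ for distinct $n$ are pairwise coprime of strictly increasing degree, and in fact their Galois groups are as large as possible. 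The objective is to show that a single $p$-adic analytic function $\phi$ cannot pick out a root of $f_n$ in $1+p\ZZ_p$ for infinitely many $n$ at once — equivalently, that the graphs $\{(n+1,\rho):f_n(\rho)=0\}$ cannot accumulate along one analytic curve — thereby contradicting the existence of the component and forcing $Z$ to be finite.

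I expect this last step to be the main obstacle: converting ``roots of infinitely many irreducible $f_n$ lie on one analytic curve'' into a contradiction. Irreducibility over $\QQ$ alone does not pin down the $p$-adic location of an individual root, so the argument must combine several inputs — separability (simple roots) and pairwise coprimality from Schur, the rigidity of analytic interpolation, and the Newton-polygon description of which roots of $f_n$ are $p$-adic units near $1$. The cleanest route I foresee is a resultant or Strassmann-type estimate bounding, uniformly in $n$, how many roots of $f_n$ can lie within distance $p^{-1}$ of the curve $r=\phi(y)$, and then showing that such coincidences persist for only finitely many $n$; this is exactly where the full strength of Schur's irreducibility is needed.
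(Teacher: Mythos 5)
Your reduction to the vanishing of $\gs{y-1}{1/r}$ is correct and matches the paper, and the mod-$p$ localization of the $y$-coordinate is a valid observation. But the proof does not close: the decisive step --- excluding an analytic curve $r=\phi(y)$ of zeros --- is never carried out. You state it as an ``objective,'' flag it as the main obstacle, and gesture at resultants, Newton polygons, and Strassmann-type estimates without executing any of them. As written, the argument establishes only the setup. There is also a smaller unjustified claim upstream: separate analyticity of $g$ in $y$ and in $r$ does not by itself make $g$ rigid-analytic on a residue disc (Theorem~\ref{thm:gsinterpolates} only gives local analyticity in $y$ on discs of unspecified radius), so the assertions that the zero set is a rigid-analytic subvariety and that a putative one-dimensional component can be parametrized as a graph over $y$ after a finite cover both need justification.

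The paper's argument sidesteps your obstacle entirely by working one fiber at a time. Fix $r$: by Theorem~\ref{thm:gsinterpolates} the function $y\mapsto \gsp{y}{1/r}$ is locally analytic on $\ZZ_p$, so $\ZZ_p$ is covered by finitely many discs on which it is analytic. Infinitely many zeros would put infinitely many in one disc $D$, forcing identical vanishing on $D$ by Weierstrass preparation, hence vanishing at two distinct natural numbers $m\neq n$ in $D$ (density of $\NN$ in $\ZZ_p$). By the identity \eqref{eqn:schurwithgs} this means $f_m(r)=f_n(r)=0$, and Schur's irreducibility \cite{schur} then makes $r$ algebraic over $\QQ$ of degree $m$ and of degree $n$ simultaneously --- a contradiction. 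The point you missed is that one never needs to locate the roots of $f_n$ or track how they vary with $n$; one only needs that a single number cannot be a root of two irreducible polynomials of different degrees, and local analyticity plus the density of $\NN$ is what converts ``infinitely many zeros'' into ``two integer zeros.'' This is Proposition~\ref{prop:finitezeros}, from which the paper deduces the theorem; if you insist on attacking the two-variable zero locus directly as you set out to do, the curve-exclusion step is exactly what you must supply, and it is absent.
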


Observe that if $(y,r) \in \ZZ_p \times(1+p\ZZ_p)$ is a zero 
of $\Gamma_p$, then the formula 
$\Gamma_p(y,r) = r^{y-1} \exp_p(pr) \gs{y-1}{1/r}$ 
shows that $(y-1,1/r)$ is a zero of $a$ 
(the formulas $1 = r^yr^{-y} = \exp_p(x)\exp_p(-x)$ 
show that the other terms never vanish). 
The result now follows from the next proposition. 

%\subsection{The zeros of \texorpdfstring{$\gs{s}{r}$}{a(s,r)}}\label{sec:zerosasr} % {{{

\begin{proposition}\label{prop:finitezeros} % {{{
    Let $r \in \CC_p^\times$ and suppose $|r|_p < p^{(p-1)^{-1}}$. 
    Let $\gsp{\,\cdot\,}{r}$ denote the unique continuous function 
    $\ZZ_p \to \CC_p$ which extends \eqref{eqn:icumahler} 
    (this function exists by Theorem~\ref{thm:gsinterpolates}). 
    Then the set $S=\{s \in \ZZ_p : \gsp{s}{r} = 0\}$ is finite. 
\end{proposition}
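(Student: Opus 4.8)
The plan is to exploit the fact that, under the hypothesis $|r|_p < p^{(p-1)^{-1}}$, Theorem~\ref{thm:gsinterpolates} guarantees that $\gsp{\,\cdot\,}{r}$ is locally analytic on the closed unit disk, hence in particular on the compact set $\ZZ_p$. First I would combine local analyticity with the compactness of $\ZZ_p$ to produce a single integer $N \geq 0$ such that the restriction of $\gsp{\,\cdot\,}{r}$ to each of the $p^N$ balls $B_a = a + p^N\ZZ_p$ (for $0 \leq a < p^N$) agrees with a power series that converges throughout $B_a$. The strategy is then to bound the number of zeros on each ball separately and sum.

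On a fixed ball $B_a$, after the affine substitution $s = a + p^N t$ the function becomes a power series $g(t) = \gsp{a+p^Nt}{r}$ convergent for $t \in \ZZ_p$. By Strassmann's theorem, such a series has only finitely many zeros in $\ZZ_p$ unless it is identically zero. Thus $S \cap B_a$ is finite for every $a$ provided $\gsp{\,\cdot\,}{r}$ does not vanish identically on any $B_a$, and summing over the $p^N$ balls then gives the finiteness of $S$.

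The crux is therefore to rule out identical vanishing on a ball. Here I would use the series \eqref{eqn:gsjointcontinuityformula} directly: writing $s^{\ul j} = s(s-1)\cdots(s-j+1)$ and using the factorization $s^{\ul{m+i}} = s^{\ul m}(s-m)^{\ul i}$, absolute convergence lets me regroup the sum to obtain, for every $s \in \ZZ_p$ and every integer $m \geq 1$,
\[
\gsp{s}{r} = \sum_{j=0}^{m-1} r^j s^{\ul j} + r^m s^{\ul m}\,\gsp{s-m}{r}.
\]
Now suppose $\gsp{\,\cdot\,}{r} \equiv 0$ on $B_a$. If $N = 0$ then $B_a = \ZZ_p$ and $\gsp{0}{r} = 1 \neq 0$ already gives a contradiction, so I may assume $N \geq 1$ and take $m = p^N$. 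For $s \in B_a$ we also have $s - p^N \in B_a$, so both $\gsp{s}{r}$ and $\gsp{s-p^N}{r}$ vanish, and the displayed identity collapses to $\sum_{j=0}^{p^N-1} r^j s^{\ul j} = 0$ for all $s \in B_a$.

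This is the desired contradiction: the left-hand side is a polynomial in $s$ of degree $p^N - 1$ with leading coefficient $r^{p^N-1} \neq 0$, hence a nonzero polynomial, which cannot vanish on the infinite set $B_a$. So no ball is a locus of identical vanishing, completing the argument. The main obstacle is precisely this last step — local analyticity and Strassmann by themselves only bound zeros on balls where the function is not identically zero, and since $\ZZ_p$ is totally disconnected there is no $p$-adic identity theorem forbidding vanishing on a single ball; the self-similar regrouping identity above is what supplies the missing rigidity.
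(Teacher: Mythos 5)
Your argument is correct, and its decisive step is genuinely different from the paper's. Both proofs open the same way: local analyticity (Theorem~\ref{thm:gsinterpolates}) plus compactness of $\ZZ_p$ reduces the finiteness of $S$ to showing that $\gsp{\,\cdot\,}{r}$ does not vanish identically on any ball of a suitable finite covering; the paper invokes the Weierstrass preparation theorem where you invoke Strassmann, an immaterial difference. The paper then rules out identical vanishing \emph{arithmetically}: it picks two distinct natural numbers $m,n$ in the offending disk, writes $\gsp{n}{r}=r^{n}n!\,f_n(1/r)$ with $f_n$ the truncated exponential polynomial, and applies Schur's theorem that the $f_n$ are irreducible over $\QQ$, so $1/r$ cannot be a common root of $f_m$ and $f_n$. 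You rule it out \emph{structurally}: the regrouping identity $\gsp{s}{r}=\sum_{j=0}^{m-1}r^{j}s^{\ul j}+r^{m}s^{\ul m}\,\gsp{s-m}{r}$, applied with $m=p^{N}$ so that the shift $s\mapsto s-p^{N}$ preserves the ball $B_a=a+p^{N}\ZZ_p$, shows that identical vanishing on $B_a$ would force the nonzero polynomial $\sum_{j=0}^{p^{N}-1}r^{j}s^{\ul j}$ (degree $p^{N}-1$, leading coefficient $r^{p^{N}-1}\neq 0$) to vanish on an infinite set, which is absurd. Your route is more elementary and self-contained: it needs nothing beyond the convergence of the Mahler series for $|r|_p<p^{(p-1)^{-1}}$ and would transfer to other Mahler series with this self-similar shape, whereas the paper's route imports Schur's irreducibility theorem but in exchange ties the hypothetical zeros to the algebraic degree of $1/r$, which meshes with the truncated exponentials used elsewhere in that section. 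The one point to make explicit in your write-up is that \eqref{eqn:gsjointcontinuityformula} is stated in the paper for $r\in\ZZ_p$; for the present range $|r|_p<p^{(p-1)^{-1}}$ the same series still converges uniformly in $s$ (since $|r^{k}k!|_p\to 0$, the estimate underlying Theorem~\ref{thm:gsinterpolates}) and hence represents the unique continuous extension, so your identity is legitimate, but the citation should be adjusted accordingly.
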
 % }}} 

A nonzero analytic function on a closed region has only finitely many zeros, 
in which case the result follows immediately from Theorem~\ref{thm:gsinterpolates} 
when $|r|_p<1$. 
However, a nonzero $p$-adic \emph{locally} analytic function may have 
infinitely many zeros on a closed region. 
Thus, for $1 \leq |r|_p < p^{(p-1)^{-1}}$, the function 
$s \mapsto \gsp{s}{r}$ 
may have infinitely many zeros in $\ZZ_p$ a priori. 
The proposition shows that this does not occur. 
%However, we will show that 
%$s \mapsto \gs{s}{r}$ indeed has only finitely many zeroes in $\ZZ_p$ 
%whenever $r \in \{z \in \CC_p^\times : |z|_p < p^{(p-1)^{-1}}\}$ 
%(by Theorem~\ref{thm:gsinterpolates}, this is the largest possible set of $r$ 
%for which this question makes sense). 
%This result will not be used elsewhere in the paper. 

%We will need the \defn{truncated exponential polynomials}, 
%\begin{equation} % {{{
%    f_n(X) := 1 + X + \tfrac{1}{2!}X^2 + \cdots + 
%    \tfrac{1}{n!}X^n
%    \quad(n \geq 1).
%\end{equation} % }}}

\begin{proof} % {{{
    By Theorem~\ref{thm:gsinterpolates}, $\gsp{\,\cdot\,}{r}$ is locally analytic on 
    the closed unit disk. 
This means that there is a covering of $\ZZ_p$ by 
    open disks 
    such that the restriction of $\gsp{\,\cdot\,}{r}$ to 
    any one of them is 
    given by a convergent power series with coefficients in $\CC_p$. 
    As $\ZZ_p$ is compact, 
    we may take this open covering to be finite. 

    Suppose that $S$ is infinite for a contradiction. 
    Then at least one disk $D$ of the open covering 
    must contain infinitely many zeros of $\gsp{\,\cdot\,}{r}$. 
    By Weierstrass's Preparation Theorem, 
    a nonzero analytic function on an open disk 
    has only finitely many zeros 
    (cf.~e.g.~\cite[\S9]{murty}). 
    Thus the restriction of $\gsp{\,\cdot\,}{r}$ 
    to $D$ is identically zero. 

    Let $m$ and $n$ be distinct integers 
    chosen from $D \cap \NN$
    (this is possible because $\NN$ is dense in $\ZZ_p$). 
    By assumption $\gsp{m}{r} = \gsp{n}{r} = 0$.
    We have 
    \begin{equation}\label{eqn:schurwithgs} % {{{
    \gsp{n}{r} = 
    \sum_{k=0}^{n} 
        \frac{n!}{(n-k)!} {r}^{k} 
        = 
        r^{n} 
        n!
    \sum_{k=0}^{n} 
        \frac{1}{k!} {r}^{-k} 
        =
        r^{n} 
        n!
        f_n(1/r).
    \end{equation} % }}}
    This implies that $1/r$ is a root of 
    $f_m(X)$ as well as $f_n(X)$. 
    These polynomials are both irreducible over $\QQ$ 
    by a classical theorem of Schur \cite{schur},  
    implying that the degree of $1/r$ as an algebraic number 
    is simultaneously equal to $m$ and $n$, a contradiction. 
    This implies $S$ is finite. 
\end{proof} % }}}

%\begin{remark}  % {{{
%    Formula \eqref{eqn:schurwithgs} shows that 
%    for any given $r\in\CC_p^\times$, 
%    $\gsp{\,\cdot\,}{r}$ has at most one zero in $\NN$, 
%    and that $\gsp{n}{r}=0$ if and only if 
%    $1/r$ is a root of the $n$th truncated exponential polynomial
%    $$f_n(X) =1 + X + \frac1{2!} X^2 + \cdots + \frac1{n!}X^n.$$
%    For instance, $f_1(X) = 1+X$ and 
%    $\gs{1}{-1} = 0$. 
%    Combinatorially, this corresponds to the fact that 
%    the trivial symmetric group $S_1$ has 
%    no permutations without a fixed point. 
%    By Schur's irreducibility result, 
%    this is the only solution in $(r,n) \in \ZZ \times \NN$ 
%    to the equation $\gs{n}{r} = 0$. 
%\end{remark} % }}}

% }}}

% }}}

\subsection{Some questions} %{*
The classical gamma function $\Gamma(s)$ is recovered from the 
incomplete gamma function $\Gamma(s,z)$ in 
the archimedean limit as $z \to 0^+$.  
Can Morita's $p$-adic gamma function \cite{morita-1975} or
Diamond's $p$-adic log gamma function \cite{diamond} 
be related to the $p$-adic incomplete gamma function? 
In 1984, Schikhof \cite[p.~17]{schikhof} asked (in our notation) 
whether 
$\gsp{-1}{-1} = 
(-1)^{p-1}d_{-1,p} =1 + 1 + 2! + 3! + 4! + 5! + \cdots$ 
is in $\QQ \cap \ZZ_p$. 
R.~Murty--Sumner \cite{murty-sumner} conjecture 
a negative outcome to Schikhof's question. 
More generally, we ask the same question of $\gsp{-1}{r}$. 
Is there a nonzero integer $r$ such that 
    $\gsp{-1}{r}\in \QQ \cap \ZZ_p$? 

There are interesting examples of functions $f \colon \NN \to \CC_p$ 
    whose $m$-values are not in $\ZZ_p$. 
Does Theorem~\ref{thm:intromkpinterpolation} 
    admit a generalization for arbitrary 
    functions $f \colon \NN \to \CC_p$? 
Can stronger $p$-adic regularity 
    (Lipschitz continuity, differentiability, 
    integrality of divided differences, analyticity) 
    be detected with congruences on $m$-values?

\section*{Acknowledgements} % {{{

We thank the reviewer for their 
    useful suggestions. 
We are also grateful to 
Daniel Barsky and Julian Rosen for pointing out 
an error on an earlier version of this manuscript. 
We thank Jeffrey Lagarias for some helpful comments. 
We also thank A. Suki Dasher for a valuable suggestion which led 
to the connection with the incomplete gamma function. 

% }}}

\bibliography{ig} 
\bibliographystyle{abbrv}

\end{document}